\numberwithin{equation}{section}
\newtheorem{thm}[equation]{Theorem}
\newtheorem{prop}[equation]{Proposition}
\newtheorem{lemma}[equation]{Lemma}
\theoremstyle{definition}
\newtheorem{defn}[equation]{Definition}
\newtheorem{ex}[equation]{Example}
\newtheorem{exs}[equation]{Examples}
\newtheorem{tabel}[equation]{Table}
\theoremstyle{remark}
\DeclareMathOperator{\Hom}{Hom}
 \DeclareMathOperator{\Md}{Mod}
 \DeclareMathOperator{\Comd}{Comod}
\DeclareMathOperator{\Dsc}{Disc} \DeclareMathOperator{\lfg}{LFG}
 \DeclareMathOperator{\Ann}{Ann}
\newcommand{\ie}{\textit{i.e.}}
\newcommand{\eg}{\textit{e.g.}}
\newcommand{\N}{\mathbb{N}}
\newcommand{\Z}{\mathbb{Z}}
\newcommand{\Zpl}{\ensuremath{\Z_{(p)}}}
\newcommand{\Q}{\mathbb{Q}}
\renewcommand{\phi}{\varphi}
\newcommand{\from}{\colon}
\renewcommand{\geq}{\geqslant}
\renewcommand{\leq}{\leqslant}
\let\iso\cong
\renewcommand{\ss}{\subseteq}
\newcommand{\ssne}{\subset}
\let\oldmod\mod
\renewcommand{\mod}{\!\!\oldmod}
\newcommand{\ab}[1]{\langle #1 \rangle}
\newcommand{\floor}[1]{\lfloor #1 \rfloor}
\newcommand{\Ds}{\bigoplus}
\newcommand{\tp}{\otimes}
\newcommand{\Mod}[2]{{}_{#1}\!\Md_{#2}}
\newcommand{\Comod}[2]{{}_{#1}\!\Comd\!_{#2}}
\newcommand{\Disc}[2]{{}_{#1}\!\Dsc\!_{#2}}
\newcommand{\LFG}[2]{{}_{#1}\!\lfg\!_{#2}}
\newcommand{\Zp}{\Z_{(p)}}
\let\smash\wedge
\renewcommand{\wedge}{\vee}
\begin{document}

\title[Discrete module categories and operations in $K$-theory]{Discrete module categories and operations in $K$-theory}

\author{A.J.~Hignett}
\address{Metaswitch Networks, Causewayside House, 160 Causewayside, 
Edinburgh EH9 1PR, UK.
}
\email{tony.hignett@metaswitch.com}

\author{Sarah Whitehouse}
\address{School of Mathematics and Statistics, University of Sheffield, Sheffield S3 7RH, UK.}
\email{s.whitehouse@sheffield.ac.uk}

\begin{abstract}
We study the categories of discrete modules for topological rings arising as the 
rings of operations in various kinds of topological $K$-theory. We prove that for these rings
the discrete modules coincide with those modules which are locally finitely generated over the ground ring.
\end{abstract}

\keywords{$K$-theory operations, discrete modules}
\subjclass[2000]{Primary:   55S25; 
Secondary: 19L64, 
           16T15, 
           11B65. 
           }
  
\date{$14^{\text{th}}$ October 2010}
\maketitle

\section{Introduction}
\label{SecIntro}

For well-behaved coalgebras $C$, the category of $C$-comodules is isomorphic to the
category of discrete modules over the dual algebra $A=C^*$. It is thus interesting to
be able to characterize discrete modules.
In~\cite{ccw3} it was shown that the category of discrete modules over the algebra
of operations in $p$-local $K$-theory is isomorphic to the category of locally
finitely generated modules. In this paper we will show that the same is true
for a family of related examples. The method used also gives a new and simpler proof
in the original case.

We work over a commutative unital Noetherian ground ring $R$; 
in the applications $R$ will be the $p$-local integers $\Zpl$ for some prime $p$;
we also consider $R=\Z$.
We are interested in topological algebras $A$ over $R$
of the following form. There is an infinite family of elements $\{a_n\,|\,n\geq 0\}$
of $A$, a \emph{topological basis}, such that $A$ consists of precisely the
infinite sums $\{\sum_{n\geq 0}r_na_n\,|\,r_n\in R\}$. 
The sets $A_m=\{\sum_{n\geq m}r_na_n\,|\,r_n\in R\}$ are ideals of $A$
and $A$ is complete with
respect to the filtration by these.

A simple example is provided by the power series ring $R[\![t]\!]$ with the
usual $t$-adic filtration. This example has many special features, of course, as
the filtration ideals are principal and $(t^r)(t^s)=(t^{r+s})$. 
These features lead to nice properties of the category of discrete modules over a
power series ring.
In contrast the examples
we consider are much more complicated and the filtrations are not multiplicative.
Nonetheless we establish conditions on $A$ under which some of these good properties 
are preserved. The examples arise as the linear duals of coalgebras having a particular form
and we also formulate our conditions in terms of the coalgebra to which $A$ is dual.

This article is organized as follows.
In Section~\ref{SecDiscreteMods} we define discrete modules and discuss some basic properties.
Section~\ref{SecComods} is devoted to comodules over a coalgebra $C$. In Proposition~\ref{PropDiscIsoComod}
we give conditions on $C$ under which the category of right $C$-comodules is isomorphic
to the category of discrete left modules over the dual of $C$. This explains our interest
in discrete modules.
In Section~\ref{SecRegCoalg} we introduce the notion of a regular coalgebra
and provide a characterization of the units in the dual of such a coalgebra.
In Section~\ref{SecKCoops} we explain how examples of topological interest fit into
this framework.

We define the notion of a module which is locally finitely generated over the ground ring 
in Section~\ref{SecLFG} and here we give our
main technical result, Theorem~\ref{ThmLFGIsDiscrete}, providing conditions on $A$ under which 
such locally finitely generated modules coincide with discrete modules. 
The conditions can be interpreted as requiring 
$A$ to have a certain resemblance to a power series ring.
We also discuss a kind of dual result, by providing the corresponding conditions on
the coalgebra to which $A$ is dual (Theorem~\ref{ThmLFGIsDiscrete(C)}).
In Section~\ref{SecApplications} we do the further work necessary to show 
that the main technical results
apply to various examples of topological interest. These results are collected together
in Theorem~\ref{ThmLFGForKs}.
\smallskip

This paper is based on work in the Ph.D. thesis of the first author~\cite{hignett},
produced under the supervision of the second author.

\section{Discrete modules}
\label{SecDiscreteMods}

We work over a commutative unital Noetherian ground ring $R$. In the applications $R$ will be
$\Zpl$;
we also consider $R=\Z$. All modules considered will be left modules and all tensor products will be over $R$.
Let $A$ be a unital topological algebra over $R$ and write $\Mod{A}{}$ for the category
of $A$-modules. In this section we define and 
investigate discrete $A$-modules and the subcategory of $\Mod{A}{}$ they form.

Our interest in discrete modules arises from the fact that,
for sufficiently nice cohomology theories $E$, the $E$-homology of a space 
or spectrum is a 
discrete module over the algebra of operations of the theory.
The discrete module category of the topological $\Zp$-algebra 
of degree zero stable operations in $p$-local $K$-theory was studied in~\cite{ccw3}. 
This paper is motivated by the desire to extend the results of~\cite{ccw3} to 
many related examples of topological interest.

\begin{defn}
Let $A$ be a topological $R$-algebra.
An $A$-module $M$ is \emph{discrete} if it is continuous when given the 
discrete topology, \ie\ if the action map $A \times M \to M$ is continuous.

The category $\Disc{A}{}$ of discrete $A$-modules is the full subcategory 
of $\Mod{A}{}$ with objects the discrete $A$-modules. 
\end{defn}

Of course, $\Disc{A}{}$ depends on the topology on $A$,
but we leave it out of the notation. In general, $A$ will not itself be a 
discrete $A$-module.

It is straightforward to check that the category $\Disc{A}{}$ is closed 
under submodules, quotients and direct sums. Hence it is a cocomplete abelian category.

We will consider those $A$ whose topology is given by a filtration
\[
A=A_0 \supset A_1 \supset A_2 \supset \dots
\]
by (two-sided) ideals. We recall that such a filtration is
\emph{Hausdorff} if the map $A \to \varprojlim A/A_n$ is injective (\ie\ $\bigcap_{n \geq 0} A_n$ is zero) and
\emph{complete} if $A \to \varprojlim A/A_n$ is surjective.

The algebras we work with will be \emph{pro-finitely generated} as modules over the ground ring
in the sense of the following definition.

\begin{defn}\label{Defnprofg}
A topological $R$-algebra $A$, with topology given by a filtration
\[
A=A_0 \supset A_1 \supset A_2 \supset \dots,
\]
is \emph{pro-finitely generated} if it is complete and Hausdorff, and each 
$A/A_n$ is a finitely generated $R$-module.
\end{defn}

Such an $R$-algebra is an inverse limit of finitely generated $R$-modules.
\medskip

In practice we characterize discrete modules by the following property.

\begin{lemma}\label{LemmaCharDisc}
\cite[2.4]{ccw3}. An $A$-module $M$ is discrete if and only if 
for every $x \in M$ there is $n$ with $A_nx=0$.
\qed
\end{lemma}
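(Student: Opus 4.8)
The plan is to unwind the definition of continuity of the action map $\mu \colon A \times M \to M$, $\mu(a,x) = ax$, directly from the relevant topologies. Recall that the filtration topology on $A$ has, as a neighborhood basis of a point $a \in A$, the cosets $a + A_n$ for $n \geq 0$; since $M$ carries the discrete topology, the product $A \times M$ has, as a neighborhood basis of a point $(a,x)$, the sets $(a + A_n) \times \{x\}$.

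First I would fix a point $(a,x) \in A \times M$ and examine what continuity of $\mu$ at this point means. As $M$ is discrete, the set $\{ax\}$ is the smallest open neighborhood of $\mu(a,x) = ax$, so $\mu$ is continuous at $(a,x)$ precisely when some basic neighborhood $(a + A_n) \times \{x\}$ is carried into $\{ax\}$. Computing, $\mu\big((a + A_n) \times \{x\}\big) = (a + A_n)x = ax + A_n x$, and this equals $\{ax\}$ if and only if $A_n x = 0$. Hence continuity of $\mu$ at $(a,x)$ is equivalent to the existence of $n$ with $A_n x = 0$ --- a condition depending only on $x$, not on $a$.

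This observation makes both implications immediate. For the forward direction, if $M$ is discrete then $\mu$ is continuous everywhere, so in particular continuous at $(0,x)$ for each $x \in M$, yielding an $n$ with $A_n x = 0$. For the converse, suppose that for every $x$ there is $n(x)$ with $A_{n(x)}x = 0$; then at an arbitrary point $(a,x)$ the neighborhood $(a + A_{n(x)}) \times \{x\}$ maps into $\{ax\}$, so $\mu$ is continuous at every point and $M$ is discrete.

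There is no serious obstacle here: the entire content is the correct bookkeeping of the product topology together with the fact that singletons are open in the discrete space $M$. The only point requiring a little care is to observe that a neighborhood of $(a,x)$ in $A \times M$ may be taken of the special form $(a + A_n) \times \{x\}$ --- that is, that one need not shrink the second factor below $\{x\}$ --- which is exactly where discreteness of $M$ enters.
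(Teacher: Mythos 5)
Your proof is correct, and it is essentially the only argument available: the paper itself states this lemma without proof, quoting it from \cite{ccw3}, and your direct unwinding of joint continuity of the action map $\mu \colon A \times M \to M$ via the basic neighborhoods $(a+A_n)\times\{x\}$, with $\{ax\}$ as the target neighborhood, is precisely the standard argument behind that citation. You also correctly isolate the one point of substance, namely that discreteness of $M$ allows the second factor of the neighborhood to be shrunk to the singleton $\{x\}$ and the target to $\{ax\}$, which is what makes the condition $A_n x = 0$ fall out.
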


For example, if $I$ is an ideal of $A$, then $A/I$ is discrete if and only if there is some $n$ such that $A_n \ss I$.
\medskip

From now on, we assume that the filtration of $A$ is complete and Hausdorff and 
that each containment $A_n \ssne A_{n-1}$ is strict. The canonical examples of 
discrete modules are the quotients $A/A_n$
for $n \geq 0$. Note that, if $M$ is discrete, there is not necessarily an $n$ 
such that $A_nM=0$: consider $\Ds_{n \geq 0} A/A_n$.
\smallskip

It is important to note that we do not assume that our filtration is multiplicative.
The examples we are interested in arise naturally in topology; they are complete with
respect to their filtration topologies, but they are not complete with respect to any
multiplicative filtration. This feature makes them complicated rings and leads to
possibly unexpected properties of the discrete module categories. 

For example, discrete module categories are in general not closed under extensions 
in the corresponding module category. Take $m,n \geq 0$ and suppose that there is no $k$ such that
\[
A_k \ss A_mA_n.
\]
Then the exact sequence
\[
0 \to A_n/A_mA_n \to A/A_mA_n \to A/A_n \to 0,
\]
of $A$-modules, where the maps are the natural inclusion and projection, 
shows that $A/A_mA_n$ is a non-discrete extension of discrete modules.

In contrast, the prototypical example with a multiplicative filtration
is the power series ring $\Z[\![t]\!]$ with its usual $t$-adic filtration.
It is easy to check that the category $\Disc{\Z[\![t]\!]}{}$ is closed under extensions in $\Mod{\Z[\![t]\!]}{}$.

\begin{defn} 
We define a functor
    \[
    \Dsc \from \Mod{A}{} \to \Disc{A}{}
    \]
as follows. On objects, for $M$ an $A$-module, define
\[
\Dsc M = \{ x \in M\,|\,A_nx=0 \textup{ for some $n$} \}\subseteq M,
\]
the maximal discrete submodule of $M$. On morphisms, 
for  an $A$-linear map $f:M\to N$,
define
$\Dsc f$ by restricting the domain of $f$ to $\Dsc M$
and the codomain to $\Dsc N$.
(If $x \in \Dsc M$ and $A_nx=0$ then $A_nf(x) = f(A_nx) = 0$, so $f|_{\Dsc M}$ lands in $\Dsc N$.)
\end{defn}

It is straightforward to check that 
the functor $\Dsc$ is right adjoint to the inclusion of $\Disc{A}{}$ into $\Mod{A}{}$.

\section{Comodules}
\label{SecComods}

In this section we recall some results about coalgebras and comodules. Our main reference is~\cite{C&C}.
Let $C$ be an $R$-coalgebra, with comultiplication map $\Delta:C\to C\otimes C$. We write $\Comod{}{C}$ for the category of right
$C$-comodules and comodule maps. If the coalgebra $C$ is flat over the ground ring
then $C$-comodules have good finiteness properties.

\begin{prop}\label{PropCoFiniteness}
If $C$ is $R$-flat, then
\begin{enumerate}
\item \cite[3.16]{C&C} every element of a right $C$-comodule $M$ is contained in a subcomodule which is finitely generated as an $R$-module;
\item every element of $C$ is contained in a subcoalgebra of $C$ which is finitely generated as an $R$-module.\qed
\end{enumerate}
\end{prop}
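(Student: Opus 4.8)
The plan is to take part~(1) as given---it is precisely the fundamental theorem of comodules over a flat base, quoted from \cite{C&C}---and to \emph{derive} part~(2) from it. The guiding idea is to regard $C$ itself as a right $C$-comodule via the comultiplication $\Delta$, extract a finitely generated subcomodule containing the given element by part~(1), and then bootstrap this subcomodule up to a genuine subcoalgebra using coassociativity and the counit. This is a one-step construction in the spirit of the classical fundamental theorem of coalgebras, not an iteration, since coassociativity forces the relevant module to close up immediately.

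Concretely, given $c \in C$, I would apply part~(1) to the right comodule $C$ to produce a finitely generated $R$-submodule $V \ni c$ with $\Delta(V) \subseteq V \otimes C$. Choosing $R$-module generators $v_1, \dots, v_k$ of $V$, every element of $V \otimes C$ is of the form $\sum_j v_j \otimes c_j$, so I may write $\Delta(v_i) = \sum_j v_j \otimes d_{ij}$ for finitely many $d_{ij} \in C$. Let $D$ be the $R$-submodule generated by the $v_i$ together with the $d_{ij}$; it is finitely generated and contains $c$. Since $\Delta(v_i) \in V \otimes D \subseteq D \otimes D$, and since $\eps$ restricts automatically, it remains only to check that $\Delta(d_{ij}) \in D \otimes D$.

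For this last point I would apply coassociativity to $v_i$ and compare $(\Delta \otimes \id)\Delta(v_i)$ with $(\id \otimes \Delta)\Delta(v_i)$. A short computation gives, in $V \otimes C \otimes C$, the identity
\[
\sum_j v_j \otimes \big( \Delta(d_{ij}) - \textstyle\sum_l d_{lj} \otimes d_{il} \big) = 0.
\]
As all the $d_{lj}$ and $d_{il}$ lie in $D$, \emph{if} one may cancel the factors $v_j$, then $\Delta(d_{ij}) = \sum_l d_{lj} \otimes d_{il} \in D \otimes D$, and $D$ is the desired finitely generated subcoalgebra; the counit axiom confirms the coalgebra structure on $D$.

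The one genuinely delicate point---and the step I expect to be the main obstacle---is exactly this cancellation. Over a field one takes the $v_j$ linearly independent and simply reads off coefficients, but over the flat ground ring $R$ a finitely generated submodule of $C$ need not be free, nor even flat, so the vanishing above does not formally force each bracket to be zero. The real content is the intersection identity $(D \otimes C) \cap (C \otimes D) = D \otimes D$ inside $C \otimes C$, which is what legitimizes the cancellation. This is precisely where $R$-flatness of $C$ enters: flatness identifies $D \otimes C$ and $C \otimes D$ with the kernels of $C \otimes C \to (C/D) \otimes C$ and $C \otimes C \to C \otimes (C/D)$, and the local-finiteness machinery of \cite{C&C} underlying part~(1), over the Noetherian ring $R$, is what allows one to arrange $D$ so that these two kernels meet in $D \otimes D$ rather than in a strictly larger module. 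I expect this flat-base bookkeeping, rather than the formal coassociativity computation, to carry the weight of the argument.
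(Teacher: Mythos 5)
The paper itself offers no argument here: part~(1) is quoted from \cite[3.16]{C&C}, and part~(2) is likewise taken as known (it is the subcoalgebra version of the finiteness theorem in \cite{C&C}), so your proposal must stand on its own --- and as written it has a genuine gap at exactly the step you flag, while the repair you sketch does not connect to your construction. The intersection identity $(D \otimes C) \cap (C \otimes D) = D \otimes D$ could only be brought to bear on $\Delta(d_{ij})$ if you already knew $\Delta(d_{ij}) \in (D \otimes C) \cap (C \otimes D)$; but your $d_{ij}$ are merely coefficients of $\Delta$ on $V$, and nothing in the construction makes $D$ a left or a right subcomodule, so there is no a priori containment available to intersect. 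Independently, the identity is simply false for a general finitely generated $D$: it requires $D$ to be pure in $C$ (equivalently, $C/D$ flat). For instance, with $R = \Z$, $C = \Z e_1 \oplus \Z e_2$ and $D = 2\Z e_1 + \Z e_2$, the element $2\, e_1 \otimes e_1$ lies in $(D \otimes C) \cap (C \otimes D)$ but not in $D \otimes D$. Finally, ``the local-finiteness machinery underlying part~(1) allows one to arrange $D$'' appeals to precisely what has to be proved: producing a finitely generated $D$ closed under the coefficients of $\Delta$ on \emph{both} sides and pure in $C$ is the entire content of the subcoalgebra finiteness theorem (the standard argument first upgrades $V$ to a sub-bicomodule $W$ with $\Delta(W) \subseteq (W \otimes C) \cap (C \otimes W)$ and only then confronts purity).

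The irony is that over the ground rings this paper actually uses, $R = \Z$ or $\Z_{(p)}$, your computation closes up with no purity discussion at all. Flatness over a PID means $C$ is torsion-free, so the finitely generated subcomodule $V$ is free; choose $v_1, \dots, v_k$ to be an $R$-basis. Then the $d_{ij}$ are uniquely determined, and since $C \otimes C$ is flat, the map $V \otimes C \otimes C \to C \otimes C \otimes C$ is injective with $V \otimes C \otimes C \cong (C \otimes C)^k$; comparing basis coefficients in your displayed identity forces every bracket to vanish, i.e.
\[
\Delta(d_{ij}) = \sum_l d_{lj} \otimes d_{il} \in D \otimes D,
\]
and since $D$ is finitely generated torsion-free, hence free, $D \otimes D \to C \otimes C$ is injective, so $D$ carries a well-defined coalgebra structure and is a finitely generated subcoalgebra containing $c$. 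For a general Noetherian $R$ --- the generality in which the proposition is stated --- finitely generated submodules of a flat module need not be free or even flat, the cancellation genuinely fails, and one should do as the paper does and quote the result from \cite{C&C}.
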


The algebras we are interested in arise as the duals of such coalgebras.
We define the $R$-module $C^* = \Mod{R}{}(C,R)$,
the $R$-linear dual. Denote the evaluation pairing $C^* \tp C \to R$ by $\ab{-,-}$ 
and recall that if $f \from M \to N$ is a map of $R$-modules, the dual map $f^* \from N^* \to M^*$ is defined by
\[
\ab{f^*(\phi),m} = \ab{\phi,f(m)}
\]
for $\phi \in N^*$ and $m \in M$. Then $C^*$ has the convolution product $*$: 
if $a_1,a_2 \in C^*$, $c \in C$ and $\Delta(c) = \sum c_i^{(1)} \tp c_i^{(2)}$,
\[
\ab{a_1 * a_2,c} = \sum \ab{a_1,c_i^{(1)}} \ab{a_2,c_i^{(2)}}.
\]
This makes $C^*$ into an $R$-algebra.

\begin{lemma}\label{LemmaDualAlgebraProfinite}
Let $C$ be a flat $R$-coalgebra. Then the dual $A=C^*$ of $C$ is a filtered $R$-algebra
which is pro-finitely generated in the sense of Definition~\ref{Defnprofg}.
\end{lemma}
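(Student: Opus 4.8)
The plan is to realize $A=C^*$ as an inverse limit of finitely generated $R$-modules, obtained by dualizing a presentation of $C$ as a directed union of finitely generated subcoalgebras. By Proposition~\ref{PropCoFiniteness}(2) every element of $C$ lies in a subcoalgebra that is finitely generated over $R$, and since the sum of two subcoalgebras is again a subcoalgebra, these finitely generated subcoalgebras form a directed system with union $C$. Picking a cofinal chain
\[
0=C_0 \ss C_1 \ss C_2 \ss \cdots, \qquad \bigcup_{n}C_n=C,
\]
I set $A_n$ to be the annihilator $\{f\in C^*:\ab{f,c}=0\text{ for all }c\in C_n\}$, so that $A_0=A$ and $A_0\supseteq A_1\supseteq A_2\supseteq\cdots$. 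Using $\Delta(C_n)\ss C_n\tp C_n$ together with the convolution formula for $*$, a direct check shows each $A_n$ is a two-sided ideal; discarding repetitions makes the containments strict.

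Next I would verify the finiteness of the quotients. Restriction of functionals gives an $R$-linear map $C^*\to C_n^*$ whose kernel is exactly $A_n$, so $A/A_n$ embeds in $C_n^*$. As $C_n$ is finitely generated over the Noetherian ring $R$, its dual $C_n^*$ is finitely generated (it embeds in $R^k$ for a surjection $R^k\surj C_n$), and therefore so is its submodule $A/A_n$. This gives the ``each $A/A_n$ finitely generated'' clause of Definition~\ref{Defnprofg}.

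For completeness and Hausdorffness I would use that $C=\varinjlim_n C_n$, so that applying $\Hom_R(-,R)$ produces a natural isomorphism $A=C^*\iso\varprojlim_n C_n^*$. The levelwise inclusions $A/A_n\inj C_n^*$ assemble into an injection $j\from\varprojlim_n A/A_n\to\varprojlim_n C_n^*$, and the canonical map $\iota\from A\to\varprojlim_n A/A_n$ has the property that $j\circ\iota$ is the isomorphism $A\iso\varprojlim_n C_n^*$. Since $j$ is injective and $j\circ\iota$ is an isomorphism, both $j$ and $\iota$ are isomorphisms; the fact that $\iota$ is an isomorphism is precisely that the filtration is complete and Hausdorff (injectivity of $\iota$ also follows directly from $\bigcup_n C_n=C$, which forces $\bigcap_n A_n=0$).

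The one genuinely non-formal point -- the step I expect to be the main obstacle -- is the reduction to a \emph{countable} cofinal chain $C_0\ss C_1\ss\cdots$: a priori the finitely generated subcoalgebras form only a directed system, whereas Definition~\ref{Defnprofg} asks for an $\N$-indexed filtration. Here I would invoke that the coalgebras of interest are countably generated over $R$ (as reflected by the countable topological bases of the introduction), which lets me choose a chain exhausting $C$. Everything else is formal, and flatness of $C$ enters only through Proposition~\ref{PropCoFiniteness}(2).
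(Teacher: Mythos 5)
Your proof is correct and follows the same overall route as the paper's: write $C=\bigcup_\alpha C_\alpha$ as a directed union of $R$-finitely generated subcoalgebras via Proposition~\ref{PropCoFiniteness}, filter $A=C^*$ by the annihilator ideals $A_\alpha=\ker(C^*\to C_\alpha^*)$, use Noetherianness of $R$ for finite generation of the quotients, and use that $(-)^*$ turns the directed union into an inverse limit to obtain completeness and Hausdorffness. You deviate at two points, both to your credit. First, the paper asserts $A/A_\alpha\iso C_\alpha^*$, i.e.\ that the restriction map $C^*\to C_\alpha^*$ is surjective; for a general flat $C$ this is not automatic (it would require, e.g., vanishing of $\Ext^1_R(C/C_\alpha,R)$), whereas you use only the embedding $A/A_n\inj C_n^*$, which suffices for finite generation over Noetherian $R$, and then you recover completeness anyway via the clean observation that if $j\circ\iota$ is an isomorphism and $j$ is injective then both $j$ and $\iota$ are isomorphisms. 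Second, you correctly flag that Definition~\ref{Defnprofg} as literally stated asks for an $\N$-indexed filtration, which a general directed system of finitely generated subcoalgebras need not admit; the paper quietly works with an arbitrary directed index set $\alpha\in I$ (and later speaks of filtrations $A_\alpha$ ``satisfying the conditions of Definition~\ref{Defnprofg}''), implicitly reading the definition broadly, whereas you restore genuine $\N$-indexing by assuming $C$ is countably generated, which holds in every example the paper considers. Either resolution is legitimate; yours makes explicit a countability hypothesis that the paper leaves implicit.
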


\begin{proof}
Because $C$ is flat, by Proposition~\ref{PropCoFiniteness}, we may write
\[
C = \bigcup_{\alpha \in I} C_\alpha,
\]
where each $C_\alpha$ is an $R$-finitely generated subcoalgebra of $C$. 
Since $R$ is Noetherian, the algebra $C_\alpha^*$ is also $R$-finitely 
generated and, because the dual functor $(-)^*$ takes colimits 
(in this case, unions) to limits,
\[
A \iso \varprojlim C_\alpha^*.
\]
Let $A_\alpha$ be the kernel of the restriction map $A=C^*\to C^*_\alpha$. Then $A$ is filtered
by the ideals $A_\alpha$ and $A/A_\alpha\cong C_\alpha^*$.
So $A$ is pro-finitely generated.
\end{proof}

Now let $M$ be a right $C$-comodule $M$, 
with coaction map $\rho_M:M\to M\otimes C$.
Define a left $A$-action on $M$ by
\begin{equation}\label{EqComodIsMod}
ax = \sum_i \ab{a, c_i} x_i,
\end{equation}
where $a \in A$, $x \in M$ and $\rho_M(x) = \sum x_i \tp c_i$.

This makes $M$ a left $A$-module and
if $f \from M \to N$ is a map of right $C$-comodules, then $f$ is also a map of left $A$-modules when 
$M$ and $N$ are given the above action.

In particular, $C$ itself is an $A$-module and this action coincides with
 the usual action on a coalgebra by
its dual. We define a functor
\[
i \from \Comod{}{C} \to \Mod{A}{},
\]
where $i(M)$ is $M$ with the above $A$-action and $i(f) = f$. Note that $i$ is faithful. In general, 
it does not have other nice properties, unless we impose conditions on $C$.

Recall that $C$ is a subgenerator of $\Comod{}{C}$, that is, every $C$-comodule is a 
subcomodule of a quotient of a direct sum of copies of $C$. If $M$ is an $A$-module, 
define the category $\sigma[M]$ to be the full subcategory of $\Mod{A}{}$ subgenerated by $M$.

\begin{thm}\label{ThmComodsAsMods}
\cite[4.3]{C&C}. The following are equivalent.
\begin{enumerate}
\item $i(\Comod{}{C})$ is a full subcategory of $\Mod{A}{}$.
\item $i(\Comod{}{C}) = \sigma[i(C)]$.
\item $C$ is a locally projective $R$-module.\qed
\end{enumerate}
\end{thm}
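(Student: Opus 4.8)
The key device is, for each $R$-module $N$, the evaluation map
\[
\alpha_N \from N \tp C \too \Hom_R(A,N), \qquad \alpha_N(n \tp c)(a) = \ab{a,c}\,n,
\]
together with the standard module-theoretic fact that $C$ is locally projective over $R$ if and only if $\alpha_N$ is injective for every $N$ (the \emph{$\alpha$-condition}). The plan is to take this characterization as the bridge between condition (3) and the categorical conditions (1) and (2), and to prove the cycle $(3)\imp(2)\imp(1)\imp(3)$.

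The heart of the forward direction is the observation that the $A$-action \eqref{EqComodIsMod} on a comodule $M$ is exactly the composite $M \xrightarrow{\rho_M} M\tp C \xrightarrow{\alpha_M} \Hom_R(A,M)$: if $\rho_M(x)=\sum x_i \tp c_i$ then $\alpha_M(\rho_M(x))(a) = \sum \ab{a,c_i} x_i = ax$. Assuming (3), hence injectivity of every $\alpha_N$, fullness of $i$ follows cleanly. Given comodules $M,N$ and an $A$-linear $f\from M\to N$, I would compare the two maps $\rho_N\circ f$ and $(f\tp\id)\circ\rho_M$ from $M$ to $N\tp C$; composing each with $\alpha_N$ yields, for every $x$, the element of $\Hom_R(A,N)$ given by $a\mapsto a\,f(x)$ (using $A$-linearity of $f$ for the second map). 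Thus $\alpha_N\circ(\rho_N f) = \alpha_N\circ((f\tp\id)\rho_M)$, and injectivity of $\alpha_N$ forces $\rho_N f = (f\tp\id)\rho_M$, so $f$ is a comodule map. This gives $(3)\imp(1)$.

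For $(3)\imp(2)$, one inclusion is formal: $C$ subgenerates $\Comod{}{C}$, and $i$ is exact and preserves direct sums, so every $i(M)$ is a subquotient of a direct sum of copies of $i(C)$, giving $i(\Comod{}{C}) \ss \sigma[i(C)]$. The reverse inclusion again uses the $\alpha$-condition: injectivity of the relevant $\alpha$-maps shows that $A$-submodules and $A$-quotients of a comodule are themselves subcomodules and quotient comodules, so every $A$-module in $\sigma[i(C)]$ carries a compatible comodule structure and hence lies in the image. The implication $(2)\imp(1)$ is then immediate, since $\sigma[i(C)]$ is by definition a full subcategory of $\Mod{A}{}$.

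The main obstacle is the converse $(1)\imp(3)$, where one must extract the module-theoretic property of local projectivity from mere fullness. Here I would exploit the cofree comodules $V\tp C$ (with coaction $\id_V\tp\Delta$) and the cofree adjunction $\Hom_{\Comod{}{C}}(M,V\tp C)\iso\Hom_R(M,V)$. If $i$ is full, then the left-hand hom-set coincides with $\Hom_A(i(M),i(V\tp C))$, and comparing these identifications for varying $M$ and $V$ pins down the image of the maps $\alpha_N$. Concretely, a nonzero element of $\ker\alpha_N$ should produce an $A$-linear map between suitable cofree comodules that fails to be colinear, contradicting fullness; ruling this out yields injectivity of every $\alpha_N$ and hence, by the characterization above, local projectivity of $C$. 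Verifying that such a witness genuinely detects the kernel of $\alpha_N$ is the delicate point on which the whole converse turns.
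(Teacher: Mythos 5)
The paper offers no proof of Theorem~\ref{ThmComodsAsMods} at all: it is quoted verbatim from \cite[4.3]{C&C} with an immediate \qedsymbol, so the benchmark is the argument in that reference, which is indeed organized, exactly as you propose, around the $\alpha$-condition (local projectivity of $C$ is equivalent to injectivity of every $\alpha_N$; see \cite[42.10]{C&C}). Your implications out of (3) are correct and are the standard ones: identifying the action \eqref{EqComodIsMod} with $\alpha_M\circ\rho_M$ and computing $\alpha_N\circ\rho_N\circ f=\alpha_N\circ(f\tp\id)\circ\rho_M$ for $A$-linear $f$ gives $(3)\imp(1)$; for $(3)\imp(2)$ your compressed remark that $A$-submodules of comodules are subcomodules is the right mechanism, though you should say explicitly that you need flatness of $C$ (which local projectivity supplies) to identify $K\tp C$ with the kernel of $M\tp C\to (M/K)\tp C$; and $(2)\imp(1)$ is formal. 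Citing the equivalence of local projectivity with the $\alpha$-condition as a known fact is reasonable.

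The genuine gap is $(1)\imp(3)$, and you have flagged it yourself without closing it: the clause ``a nonzero element of $\ker\alpha_N$ \emph{should} produce an $A$-linear map between suitable cofree comodules that fails to be colinear'' is not a proof but a restatement of what must be constructed, and the construction is not routine. Concretely, given $0\neq u=\sum_i n_i\tp c_i\in\ker\alpha_N$, the obvious candidate $f_u\from C\to N\tp C$, $f_u(c)=\eps(c)\,u$ (cofree coaction $\id_N\tp\Delta$ on the target), is generally \emph{not} $A$-linear: $A$-linearity would require $\ab{a,c}\,u=\eps(c)\,(a\cdot u)$ for all $a,c$, and one cannot rescue this by hoping $a\cdot u=0$, since $a\cdot u=\sum_i n_i\tp (ac_i)$ is $\alpha_{N\tp C}\bigl((\id_N\tp\Delta)(u)\bigr)(a)$, and $\alpha_N(u)=0$ says nothing about $(\id_N\tp\Delta)(u)$ pairing to zero in its outer leg. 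Likewise, the alternative of perturbing the cofree coaction by a term built from $u$ (so that the identity map becomes $A$-linear but not colinear) passes the counit test, because $(\id_N\tp\eps)(u)=\alpha_N(u)(\eps)=0$, but coassociativity imposes conditions on $(\id_N\tp\Delta)(u)$ that an arbitrary element of $\ker\alpha_N$ need not satisfy. So the ``delicate point'' you defer is precisely where all the content of the converse lives, and as written your proposal establishes only $(3)\imp(2)\imp(1)$, not the equivalence. To complete it you must either carry out the witness construction in full or follow the argument given in \cite[4.3]{C&C} together with \cite[42.9, 42.10]{C&C}, which devotes a genuine argument to exactly this direction rather than deducing it from the cofree adjunction alone.
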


We refer to~\cite[42.9]{C&C} for the definition of a locally projective module.
(This condition on modules lies between projectivity and flatness.)

From Theorem~\ref{ThmComodsAsMods} we see that, if $C$ is locally projective as an $R$-module (and hence flat),
\begin{enumerate}
\item $i$ is a inclusion of categories
\[
\Comod{}{C} \to \Mod{A}{};
\]
\item $\Comod{}{C}$ has all kernels, cokernels and direct sums;
\item every element of a $C$-comodule is contained in an $R$-finitely generated $C$-comodule;
\item $C$ may be written as a union $\bigcup C_\alpha$ of finitely generated subcoalgebras $C_\alpha$.
\end{enumerate}

It is also known when $i$ is an equality.

\begin{prop}\label{PropComodIsoMod}
\cite[4.7]{C&C}.
Suppose $C$ is locally projective. Then $i(\Comod{}{C})$ and $\Mod{A}{}$ 
are equal if and only if $C$ is a finitely generated and projective $R$-module.\qed
\end{prop}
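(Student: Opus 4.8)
The plan is to prove the two implications separately, with the forward direction (equality $\imp$ finitely generated projective) carrying essentially all the content.

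For the converse, suppose $C$ is finitely generated and projective over $R$. Then for every $R$-module $M$ the canonical evaluation map $M \tp C \to \Hom(C^*,M) = \Hom(A,M)$, $x \tp c \mapsto (f \mapsto \ab{f,c}x)$, is a natural isomorphism. Under it a coaction $\rho_M \from M \to M \tp C$ corresponds to an $A$-action $A \tp M \to M$, with coassociativity and counitality translating into associativity and unitality, and comodule maps into module maps. This gives a mutually inverse pair of constructions on a fixed underlying $R$-module, so $i$ is an isomorphism of categories and in particular $i(\Comod{}{C}) = \Mod{A}{}$. The only thing to verify is that the two translations are inverse and respect morphisms, which is a routine diagram chase once the tensor--hom isomorphism is in hand.

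For the forward direction, suppose $i(\Comod{}{C}) = \Mod{A}{}$. The first step is to show $A$ is finitely generated over $R$. Since $A \in \Mod{A}{} = i(\Comod{}{C})$, the module $A$ underlies a $C$-comodule, so by Proposition~\ref{PropCoFiniteness}(1) (valid as $C$ is flat) the unit $1 \in A$ lies in an $R$-finitely generated subcomodule $N$. As $i$ carries subcomodules to $A$-submodules, $N$ is an $A$-submodule of $A$ containing $1$, whence $A = A\cdot 1 \ss N$ and $A = N$ is finitely generated over $R$. The second step passes back to $C$. Because $C$ is locally projective the canonical map $C \to C^{**}$ is injective: if $c$ lies in its kernel, applying the factorization property of local projectivity to the inclusion $Rc \inj C$ writes $c = \sum \ab{h_i,c}n_i$ with $h_i \in C^*$, forcing $c = 0$. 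Since $R$ is Noetherian and $A = C^*$ is finitely generated, a surjection $R^m \surj A$ dualizes to an embedding $A^* \inj R^m$, so $A^* = C^{**}$ is finitely generated; hence $C$, embedding in the finitely generated module $C^{**}$ over a Noetherian ring, is itself finitely generated. Finally a finitely generated locally projective module is projective, so $C$ is finitely generated and projective, as required.

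The main obstacle I anticipate is the forward direction, and specifically the role of local projectivity. Mere flatness of $C$ does not guarantee that $C \to C^{**}$ is injective; it is exactly the local projectivity ($\alpha$-condition) that rescues this step and lets finiteness of $A$ be transported to $C$. The unit-element argument producing finite generation of $A$ is short but is the genuinely new input; everything after it is standard module theory over a Noetherian ground ring.
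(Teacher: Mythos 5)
The paper offers no proof of this proposition at all: it is quoted from \cite[4.7]{C&C} and stamped with a \textup{qed}, so the benchmark is the proof in Brzezinski--Wisbauer rather than anything internal to the paper. Your argument is correct, and its engine is the same as in the cited source: since the regular module $A$ must underlie a comodule when $i(\Comod{}{C}) = \Mod{A}{}$, Proposition~\ref{PropCoFiniteness}(1) (legitimately invoked, as locally projective implies flat) applied to $1 \in A$ produces an $R$-finitely generated subcomodule that is an $A$-submodule containing $1$, forcing $A$ itself to be $R$-finitely generated --- this unit-element step is indeed the genuinely nontrivial input. Your subsequent uses of local projectivity are also sound: the local dual-basis property gives both the injectivity of $C \to C^{**}$ (torsionlessness) exactly as you argue, and the standard fact that a finitely generated locally projective module is projective (apply the dual-basis property to a finite generating set). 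The converse via the natural isomorphism $M \tp C \iso \Hom(A,M)$ for finitely generated projective $C$ is the standard duality, and the verifications you defer really are routine. The one place you diverge from the quoted result: you transfer finiteness from $A$ to $C$ by dualizing a surjection $R^m \surj A$ and invoking Noetherianness of $R$ twice (to get $A^* \ss R^m$ finitely generated, then $C \ss C^{**}$ finitely generated), whereas \cite{C&C} proves the proposition over an arbitrary commutative ground ring, so its passage from $C^*$ finitely generated to $C$ finitely generated must avoid this. Within the present paper that costs nothing, since $R$ is Noetherian throughout (here $\Z$ or $\Zpl$), but you should be aware that your proof is strictly less general than the statement as it lives in the cited source.
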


\begin{lemma}\label{LemmaComodsAreDiscrete}
Let $C$ be an $R$-coalgebra with dual algebra $A=C^*$.
Suppose that $C = \bigcup C_\alpha$ for finitely generated subcoalgebras $C_\alpha$.
Then $i(C)$ is a discrete $A$-module. If in addition $C$ is a locally projective $R$-module,
then $i(\Comod{}{C})$ is a full subcategory of $\Disc{A}{}$.
\end{lemma}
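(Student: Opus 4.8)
The plan is to work with the filtration of $A = C^*$ by the ideals $A_\alpha = \ker(C^* \to C_\alpha^*)$ from Lemma~\ref{LemmaDualAlgebraProfinite}, and to apply the discreteness criterion of Lemma~\ref{LemmaCharDisc}. The crucial point, immediate from the construction, is that $A_\alpha$ is exactly the annihilator of $C_\alpha$ under the evaluation pairing, \ie\ $A_\alpha = \{a \in A : \ab{a,c} = 0 \text{ for all } c \in C_\alpha\}$. Before starting I would note that, although Lemma~\ref{LemmaCharDisc} is phrased for an $\N$-indexed filtration, its proof only uses that the filtration ideals form a neighbourhood basis of $0$; so it applies verbatim here with $A_n$ replaced by $A_\alpha$, giving that an $A$-module $M$ is discrete precisely when each $x \in M$ is annihilated by some $A_\alpha$.

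For the first assertion I would take $c \in C$ and use $C = \bigcup C_\alpha$ to pick $\alpha$ with $c \in C_\alpha$. Since $C_\alpha$ is a subcoalgebra we have $\Delta(c) \in C_\alpha \tp C_\alpha$, so $\Delta(c)$ admits a representation $\sum_i c_i' \tp c_i''$ with every $c_i'' \in C_\alpha$. The $A$-action on $i(C)$ is the usual dual action, which by \eqref{EqComodIsMod} is $a \cdot c = \sum_i \ab{a, c_i''} c_i'$. For $a \in A_\alpha$ each $\ab{a, c_i''}$ vanishes, so $a \cdot c = 0$; thus $A_\alpha c = 0$ and $i(C)$ is discrete.

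For the second assertion, Theorem~\ref{ThmComodsAsMods} already supplies, under local projectivity of $C$, that $i$ realises $\Comod{}{C}$ as a full subcategory of $\Mod{A}{}$. It therefore remains only to verify that every $i(M)$ is a discrete $A$-module. Given $x \in M$ with coaction $\rho_M(x) = \sum_i x_i \tp c_i$, the finitely many elements $c_i$ all lie in $C = \bigcup C_\alpha$; since the finitely generated subcoalgebras are closed under finite sums (a sum of finitely generated subcoalgebras is again one), they form a directed system, and I can choose a single $\alpha$ with all $c_i \in C_\alpha$. Then \eqref{EqComodIsMod} gives $a \cdot x = \sum_i \ab{a, c_i} x_i = 0$ for every $a \in A_\alpha$, so $A_\alpha x = 0$ and $i(M)$ is discrete.

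I do not expect genuine difficulty here; the only points requiring care are the bookkeeping just described---checking that the discreteness criterion transfers to the $\alpha$-indexed filtration and that the subcoalgebras form a directed system, so that a single index $\alpha$ can be chosen to kill a given element. Both assertions then reduce to the one elementary observation that $A_\alpha$ annihilates any element whose relevant coaction components lie in $C_\alpha$, which is nothing but the defining property of $A_\alpha$ as the annihilator of $C_\alpha$.
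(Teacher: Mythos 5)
Your proof is correct, and the first assertion is handled exactly as in the paper: pick $\alpha$ with $c \in C_\alpha$, note $\Delta(c) \in C_\alpha \tp C_\alpha$, and conclude $A_\alpha c = 0$ from $A_\alpha = \Ann C_\alpha$. For the second assertion, however, you take a different route. The paper does not verify discreteness of each $i(M)$ by hand; instead it invokes Theorem~\ref{ThmComodsAsMods}(2), which says $i(\Comod{}{C}) = \sigma[i(C)]$, the subcategory of $\Mod{A}{}$ subgenerated by $i(C)$, and then uses the fact (established in Section~\ref{SecDiscreteMods}) that $\Disc{A}{}$ is closed under submodules, quotients and direct sums: since $i(C)$ is discrete by the first part, everything in $\sigma[i(C)]$ is discrete. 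Your argument instead uses only Theorem~\ref{ThmComodsAsMods}(1) for fullness and checks discreteness elementwise via the coaction, $\rho_M(x) = \sum_i x_i \tp c_i$ with all $c_i$ in a single $C_\alpha$, so $A_\alpha x = 0$. This is in fact exactly the ``direct'' computation the paper itself records at the start of the proof of Proposition~\ref{PropDiscIsoComod}, so your version front-loads it into the lemma. Two points in your write-up deserve credit as genuinely necessary care, not mere bookkeeping: since the family $\{C_\alpha\}$ is arbitrary (unlike the nested filtration $C_0 \ssne C_1 \ssne \dots$ of the regular-coalgebra setting), one must close it under finite sums to find a single index killing all the $c_i$, and your observation that a sum of finitely generated subcoalgebras is again one justifies this without changing the topology, since $\Ann(C_{\alpha_1} + C_{\alpha_2}) = A_{\alpha_1} \cap A_{\alpha_2}$ is still open; likewise your remark that Lemma~\ref{LemmaCharDisc} transfers to the $\alpha$-indexed filtration is the right way to make the criterion available. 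The trade-off: the paper's categorical argument is shorter and reuses the closure properties of $\Disc{A}{}$, while yours is more self-contained and makes the mechanism (annihilator ideals versus coaction components) explicit --- and, as a bonus, your direct check makes the first assertion a special case ($M = C$) rather than a needed input.
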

 
\begin{proof}
If $c \in C$, take $\alpha$ such that $c \in C_\alpha$. 
Then $\Delta(c) = \sum c_i^{(1)} \tp c_i^{(2)} \in C_\alpha \tp C_\alpha$. If $a \in A$,
\[
ac = \sum \ab{a,c_i^{(2)}}c_i^{(1)},
\]
so $A_\alpha c = 0$. Hence $i(C)$ is a discrete $A$-module.

If $C$ is a locally projective $R$-module, then by Theorem~\ref{ThmComodsAsMods} 
$i(\Comod{}{C})$ is the full subcategory $\sigma[i(C)]$ of $\Mod{A}{}$. 
Since $\Disc{A}{}$ has direct sums, kernels and cokernels, 
 $i(\Comod{}{C})$ is a full subcategory of $\Disc{A}{}$.
\end{proof}

\begin{prop}\label{PropDiscIsoComod}
Let $C$ be a locally projective $R$-coalgebra with dual algebra $A$. 
Suppose that $C = \bigcup C_\alpha$ for subcoalgebras $C_\alpha$ which are 
finitely generated and projective $R$-modules. Then $i$ is an isomorphism of categories
\[
\Comod{}{C} \overset{\iso}{\to} \Disc{A}{}.
\]
\end{prop}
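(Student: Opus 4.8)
The plan is to produce a two-sided inverse functor $j\from\Disc{A}{}\to\Comod{}{C}$ that, like $i$, is the identity on underlying $R$-modules and on $R$-linear maps. By Lemma~\ref{LemmaComodsAreDiscrete} (which applies since finitely generated projective modules are locally projective and $C=\bigcup C_\alpha$), the functor $i$ takes values in $\Disc{A}{}$ and is full there; it is faithful in general. Since $i(f)=f$ and the intended $j(g)=g$, fullness and faithfulness already identify the morphism sets $\Hom_{\Comod{}{C}}(M,N)=\Hom_{\Disc{A}{}}(iM,iN)$ inside the $R$-linear maps $M\to N$. Thus the whole proposition reduces to showing that $i$ is a bijection on objects, \ie\ that every discrete $A$-module carries one and only one $C$-comodule structure inducing its $A$-action.

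For uniqueness I would appeal to local projectivity directly: it is equivalent to the canonical map $\alpha_M\from M\tp C\to\Hom_R(C^*,M)=\Hom_R(A,M)$ being injective for every $R$-module $M$. Formula~\eqref{EqComodIsMod} says exactly that the structure map $M\to\Hom_R(A,M)$, $x\mapsto(a\mapsto ax)$, equals $\alpha_M\circ\rho_M$. Hence $\rho_M$ is determined by the $A$-action, and any two comodule structures inducing the same action coincide.

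For existence, given a discrete $A$-module $M$ I would filter it by the $R$-submodules $M_\alpha=\{x\in M:A_\alpha x=0\}$, so that $M=\bigcup_\alpha M_\alpha$ by discreteness, with $M_\alpha\ss M_\beta$ whenever $C_\alpha\ss C_\beta$. Each $M_\alpha$ is a module over $A/A_\alpha\iso C_\alpha^*$, and since $C_\alpha$ is finitely generated and projective, Proposition~\ref{PropComodIsoMod} applied to $C_\alpha$ equips it with a unique $C_\alpha$-comodule structure $\rho_\alpha\from M_\alpha\to M_\alpha\tp C_\alpha$ inducing this action. Pushing each $\rho_\alpha$ forward along the inclusions $M_\alpha\tp C_\alpha\to M\tp C$ gives candidate coactions on the pieces, and the task is to glue them into a single $R$-linear $\rho_M\from M\to M\tp C$ satisfying the coassociativity, counit and action axioms.

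The main obstacle is this gluing, namely the compatibility of the $\rho_\alpha$ over the directed system. I would first note that, because each $A_\alpha$ is a two-sided ideal, $M_\alpha$ is a $C_\beta^*$-submodule of $M_\beta$ for $\alpha\leq\beta$, hence --- via Proposition~\ref{PropComodIsoMod} for $C_\beta$ --- a $C_\beta$-subcomodule, so $\rho_\beta$ restricts to a map $M_\alpha\to M_\alpha\tp C_\beta$. Both this restriction and the pushforward $(\id\tp\iota_{\alpha\beta})\rho_\alpha$ of $\rho_\alpha$ along $\iota_{\alpha\beta}\from C_\alpha\inj C_\beta$ are $C_\beta$-comodule structures on $M_\alpha$ inducing the same $C_\beta^*$-action (the restriction of the $A$-action), so they agree by the uniqueness clause at level $\beta$. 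Consequently the images of $\rho_\alpha(x)$ and $\rho_\beta(x)$ in $M\tp C$ coincide for $x\in M_\alpha$, so $\rho_M:=\varinjlim_\alpha\rho_\alpha$ is well defined. Coassociativity, counitality and formula~\eqref{EqComodIsMod} for $\rho_M$ then follow by checking them on each $M_\alpha$, using that $\Delta$ and the counit of $C$ restrict to those of the subcoalgebra $C_\alpha$. This supplies the required comodule structure and completes the bijection on objects.
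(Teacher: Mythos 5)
Your proposal is correct and follows essentially the same route as the paper: the inverse is built by filtering a discrete module $N$ by the pieces $N_\alpha=\{y\,|\,A_\alpha y=0\}$, identifying each as a module over $A/A_\alpha\iso C_\alpha^*$, invoking Proposition~\ref{PropComodIsoMod} at each finite level, and passing to the union. Your extra care with uniqueness (via the injectivity of $\alpha_M\from M\tp C\to\Hom_R(A,M)$ coming from local projectivity) and with the compatibility of the coactions $\rho_\alpha$ over the directed system simply makes explicit what the paper compresses into ``$N$ is a union of $C$-comodules and therefore a $C$-comodule itself'' and ``it is straightforward to check that these procedures are inverse to each other.''
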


\begin{proof}
We have already observed that in this case $i(\Comod{}{C})$ is a full 
subcategory of $\Disc{A}{}$. Directly, if $M$ is a right $C$-comodule, 
$x \in M$ and $\rho_M(x) = \sum x_i \tp c_i$, we take $\alpha$ such that 
all $c_i$ are in $C_\alpha$; then $A_\alpha x = 0$.

Conversely, take a discrete left $A$-module $N$ and an $\alpha$. Let
\[
N_\alpha = \{ y \in N\,|\,A_\alpha y = 0 \};
\]
then $N_\alpha$ is a left $A/A_\alpha$-module. Since $C_\alpha^* \iso A/A_\alpha$, 
Proposition~\ref{PropComodIsoMod} implies that $\Comod{}{C_\alpha} \iso \Mod{A/A_\alpha}{}$. 
Hence $N_\alpha$ is a right $C_\alpha$-comodule and so a right $C$-comodule. Finally, 
since $N = \bigcup N_\alpha$, $N$ is a union of $C$-comodules and therefore a $C$-comodule itself.

It is straightforward to check that these procedures are inverse to each other.
\end{proof}

\section{Regular coalgebras and their duals}
\label{SecRegCoalg}

In this section we will introduce the particular kind of coalgebra we work with.
Recall that the rational polynomial and Laurent polynomial rings $\Q[w]$
and $\Q[w,w^{-1}]$ are $\Z$-coalgebras with comultiplication determined by
making the indeterminate $w$ grouplike. We will be interested in $\Z$-subcoalgebras of these; 
we restrict to those which are free $R$-modules, where $R$ is either $\Z$ or $\Z_{(p)}$ for a prime $p$.

If $M$ is a free $R$-submodule of $\Q[w^r]$ (or $\Q[w^r, w^{-r}]$) for some $r \geq 1$, an $R$-basis for $M$ is
called \emph{regular} if it consists of polynomials $g_{n}$, for $n\in\N_0$ (or $n\in\Z$),
where the degree of the polynomial $g_{n}$ is $rn$. (See~\cite[II.1.3]{ivps}.)

\begin{defn}\label{DefnRegularCoalgebra}
Let $C$ be an $R$-subcoalgebra of $\Q[w^r]$ for some $r \geq 1$. Then we call $C$ a \emph{regular $R$-coalgebra} if
\begin{enumerate}
\item $C$ is a free $R$-module on a chosen regular basis $\{ c_n(w)\,|\,n \geq 0 \}$,
(where the degree of $c_n(w)$ is $rn$), and
\item $w^{rn} \in C$ for every $n \geq 0$.
\end{enumerate}
\end{defn}

\begin{exs}\label{ExsRegularCoalgebra}

\begin{enumerate}
\item For $R=\Z$ or $\Zpl$, $R[w^r]$ with basis $\{w^{rn}\,|\,n\geq 0\}$ is a regular coalgebra.
\item Taking $R=\Z$ and $r=1$, the free $R$-module $C$ with basis the binomial
coefficient polynomials
    $$
    \binom{w}{n}=\frac{w(w-1)\dots(w-n+1)}{n!}
    $$
is a regular coalgebra.
As is well-known, these polynomials form a basis for the ring of integer-valued
polynomials:
    $$
    C=\{f(w)\in\Q[w]\,|\,f(\Z)\subseteq \Z\}.
    $$
\end{enumerate}
\end{exs}
\smallskip

Further examples related to $K$-theory will appear in Section~\ref{SecKCoops}.
\medskip

Now let $C$ be a regular coalgebra with chosen regular basis $\{ c_n(w)\,|\,n \geq 0 \}$.
As will become clear, our methods are heavily dependent on the properties
of the specified basis elements; indeed this is why we have chosen to incorporate
the choice of basis into the definition. (While there are nice basis-free descriptions of
some of the objects we are interested in, we do not know how to prove our main results
without specifying bases.)

Thus
we introduce notation for the coefficients appearing in the basis elements, in the
comultiplication $\Delta$ applied to them and when
expressing powers of $w$ in terms of the basis elements.

\begin{defn}\label{DefnCoeffs}
For $k,n \geq 0$, define integers $\lambda_k^n$ and $D_n$ by writing $c_n(w)$ as
\[
c_n(w) = \frac{1}{D_n}\sum_{k=0}^n \lambda_k^n w^{kr},
\]
where 
$D_n > 0$ and the integers $\lambda^n_0, \dots,
\lambda^n_n, D$ have no non-trivial common factor.

\noindent Also define $\Lambda_n^k$ for $n,k \geq 0$ by
\[
w^{kr} = \sum_{n=0}^k \Lambda_n^k c_n(w).
\]
For $i,j,n \geq 0$, define $\Gamma_{i,j}^n$ by
\[
\Delta(c_n(w)) = \sum_{i,j=0}^n \Gamma_{i,j}^n c_i(w) \tp c_j(w).
\]
Where necessary, we will write $\Gamma(C)_{i,j}^n$, etc., to specify the coalgebra we are referring to.
\end{defn}

Next we consider the dual algebras of regular coalgebras. If $C$ is a 
regular $R$-coalgebra, define the finite rank subcoalgebras
\[
C_n = \{ f(w) \in C\,|\,\deg f(w) \leq rn \}
= R\{c_0(w),\dots,c_n(w)\},
\]
so that we have an increasing filtration $C_0 \ssne C_1 \ssne C_2 \ssne \dots$. 
Denote the dual $R$-algebra of $C$ by $A$. It has a \emph{topological $R$-basis} $\{ a_n\,|\,n \geq 0 \}$, 
where $a_n = c_n(w)^*$; \ie\ $A$ consists exactly of the infinite sums
\[
\sum_{n \geq 0} r_na_n
\]
for $r_n \in R$. For any $i,j,n \geq 0$, $\ab{a_ia_j,c_n(w)} = \Gamma^n_{i,j}$, so
the product of basis elements is given by
\[
a_ia_j = \sum_{n \geq 0} \Gamma^n_{i,j} a_n.
\]
The algebra $A$ has the topology given by the decreasing filtration by the ideals
\begin{align*}
A_n &= \Ann C_{n-1}= \{ a \in A\,|\,\ab{a,f(w)} = 0 \textup{ whenever $f(w) \in C_{n-1}$} \}\\
&= \Big\{ \sum_{k \geq n} r_ka_k\,|\,r_k \in R \Big\}.
\end{align*}
Thus $A$ is pro-finitely generated in the sense of Definition~\ref{Defnprofg}.

\begin{ex}
For $C=R[w^r]$ as in Example~\ref{ExsRegularCoalgebra}(1), the dual $A=C^*$ is the infinite
product $\prod_{i=0}^\infty R$, filtered by the ideals  $\prod_{i=n}^\infty R$.
\end{ex}

The coalgebra $D=C[w^{-r}]$ is the union of the finitely generated subcoalgebras
\[
D_n = D \cap \Q\{w^{-\floor{n/2}r},\dots,w^{\lceil n/2 \rceil r}\}
\]
for $n \geq 0$. Denote the dual algebra of $D$ by $B$; it also has a 
topological $R$-basis and may be filtered by the ideals $B_n = \Ann D_{n-1}$.
\medskip

For a regular coalgebra $C$, we can make the isomorphism of Proposition~\ref{PropDiscIsoComod}
between $C$-comodules and discrete $A$-modules very explicit.
The inverse 
functor $i^{-1} \from \Disc{A}{} \to \Comod{}{C}$ 
to $i:\Comod{}{C}\to \Disc{A}{}$ is given
explicitly on objects as follows. (On morphisms, 
of course, it is the identity.) If $N$ is a discrete $A$-module, then $i^{-1}(N)$ 
has the $C$-coaction $\rho_{i^{-1}(N)} \from i^{-1}(N) \to i^{-1}(N) \tp C$, where
\[
\rho_{i^{-1}(N)}(y) = \sum_{n \geq 0} a_ny \tp c_n(w).
\]
This sum has only finitely many terms because $N$ is discrete.
\medskip

An element $a$ of $A$ is determined by the sequence $(\ab{a,w^{rn}})_{n \geq 0}$ of 
elements of $R$. We use this fact to characterize the units in $A$.

\begin{lemma}\label{LemmaDetectingUnits}
Let $C$ be a regular $R$-coalgebra and $A = C^*$. Set $D = C[w^{-r}]$ and $B = D^*$. Then
\begin{enumerate}
\item an element $a$ of $A$ is a unit if and only if $\ab{a,w^{rn}} \in R^\times$ for each $n \geq 0$.
\item an element $b$ of $B$ is a unit if and only if $\ab{b,w^{rn}} \in R^\times$ for each $n \in \Z$.
\end{enumerate}
\end{lemma}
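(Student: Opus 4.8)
The plan is to prove both statements by the same mechanism, reducing the question of invertibility in the topological algebra to invertibility of a single scalar pairing $\ab{a,w^0}=\ab{a,1}\in R$, and then using completeness of the filtration to build the inverse as a convergent infinite sum. The forward direction is immediate: if $a\in A$ has an inverse $a^{-1}$, then evaluating the convolution product against the grouplike element $w^{rn}$ gives $\ab{a*a^{-1},w^{rn}}=\ab{a,w^{rn}}\ab{a^{-1},w^{rn}}=\ab{1,w^{rn}}=1$ (using that $\Delta(w^{rn})=w^{rn}\tp w^{rn}$ since $w$ is grouplike, so $w^{rn}$ is grouplike too), which forces $\ab{a,w^{rn}}\in R^\times$. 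The same computation works verbatim in $B$ for $n\in\Z$.

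For the converse, I would first treat the case $n=0$ to reduce to the case where $a$ is a ``principal unit''. Since $\ab{a,w^0}=\ab{a,1}$ is the coefficient picking out the counit, and $\ab{a,w^{rn}}\in R^\times$ in particular for $n=0$, after multiplying $a$ by the unit $\ab{a,1}^{-1}\in R\subseteq A$ I may assume $\ab{a,1}=1$. The key structural observation to establish is that the filtration interacts well with the pairing against powers of $w$: because $C_{n-1}=R\{c_0,\dots,c_{n-1}\}$ and the $c_k$ are polynomials of degree $rk$, an element lies in $A_n=\Ann C_{n-1}$ precisely when it annihilates all of $w^0,w^r,\dots,w^{r(n-1)}$ (these span $C_{n-1}\tp\Q$, and the change of basis is triangular and invertible by Definition~\ref{DefnCoeffs}, using $\Lambda_n^k$). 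The crucial lemma I would isolate is that if $u\in A$ satisfies $\ab{u,w^{rk}}=0$ for $0\le k\le n-1$ and $\ab{u,1}=1$, then $u\equiv 1\pmod{A_n}$ up to the invertible scalar in degree $0$; more usefully, I want: if all $\ab{a,w^{rn}}=1$ then $a-1\in A_1$ but I cannot directly conclude $a\equiv 1$ modulo higher filtration without controlling the intermediate pairings.

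The main obstacle, and the real content, is that the filtration is \emph{not} multiplicative: knowing $\ab{a,w^{rn}}\in R^\times$ for all $n$ does not obviously let me invert $a$ modulo each $A_n$ term-by-term, because $A_iA_j\not\subseteq A_{i+j}$ in general. My strategy to circumvent this is to invert $a$ one filtration quotient at a time and pass to the limit. Concretely, I would show by induction that there exist $b_n\in A$ with $ab_n\equiv 1\pmod{A_n}$: the base case uses $\ab{a,1}\in R^\times$, and the inductive step constructs $b_{n+1}=b_n(1-e_n)$ where $e_n\in A_n$ represents the error $ab_n-1\in A_n/A_{n+1}$ in the finitely generated $R$-module $A_n/A_{n+1}$. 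To make this step go through I must verify that the relevant coefficient in $A/A_{n+1}\iso C_n^*$ is invertible in $R$, which is exactly where the hypothesis $\ab{a,w^{rn}}\in R^\times$ feeds in: the leading term of $c_n(w)$ is $\lambda_n^n w^{rn}/D_n$, so pairing against $c_n(w)$ modulo lower filtration is governed (after inverting the appropriate integers, which lie in $R^\times=\Zpl^\times$ or $\Z^\times$) by $\ab{a,w^{rn}}$. Once the compatible sequence $(b_n)$ is built, completeness of $A$ (Lemma~\ref{LemmaDualAlgebraProfinite}, Definition~\ref{Defnprofg}) produces a genuine element $b=\varprojlim b_n\in A$ with $ab=1$, and a symmetric construction on the other side, together with associativity, gives $ba=1$. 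For part (ii) the argument is identical with $C$ replaced by $D=C[w^{-r}]$, whose filtration quotients $D_n$ now involve negative powers $w^{-\floor{n/2}r},\dots,w^{\lceil n/2\rceil r}$, so the induction must grow the window in both directions; the hypothesis over all $n\in\Z$ supplies exactly the invertible scalars needed at each stage.
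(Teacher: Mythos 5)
Your forward direction is fine and matches the paper's. Your converse also has the right architecture --- approximate an inverse one filtration step at a time and pass to the limit by completeness --- but the inductive step you actually propose fails, for precisely the reason you yourself flagged as ``the main obstacle''. If $ab_n = 1+\epsilon$ with $\epsilon \in A_n$ and you set $b_{n+1} = b_n(1-e_n)$ with $e_n \equiv \epsilon \bmod A_{n+1}$, then $ab_{n+1} = 1 + (\epsilon - e_n) - \epsilon e_n$; the term $\epsilon - e_n$ lies in $A_{n+1}$ by choice, but $\epsilon e_n$ lies only in $A_nA_n$, which is \emph{not} contained in $A_{n+1}$ since the filtration is not multiplicative. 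Concretely, $\Gamma_{n,n}^n = \Lambda_n^n\Lambda_n^n\lambda_n^n/D_n = \Lambda_n^n \neq 0$, so already $a_n^2 \in A_n$ has a nonzero $a_n$-coefficient; hence the iteration stalls at $A_n$ and never improves. There is also a structural red flag: in your scheme as written, the hypothesis $\ab{a,w^{rn}} \in R^\times$ never actually enters for $n \geq 1$ (the class $e_n$ of $ab_n - 1$ needs no solvability condition), so if the scheme worked it would prove that invertibility of $\ab{a,1}$ alone suffices --- which is false: for $C = R[w^r]$ one has $A \iso \prod R$ with componentwise product, and $1-a_1$ satisfies $\ab{1-a_1,1}=1$ but $\ab{1-a_1,w^r}=0$, and it is not a unit.

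The repair, which is the paper's proof, is to make the correction \emph{additive}: seek $b = \sum_i s_ia_i$ and choose $s_n$ inductively so that $a(s_0 + s_1a_1 + \dots + s_na_n) \in 1 + A_{n+1}$. Since $A_n$ is an ideal, $a\cdot s_na_n \in A_n$ does not disturb lower coefficients, and the $a_n$-coefficient to be killed is $s_n\big(\sum_{k=0}^n r_k\Gamma_{k,n}^n\big) + (\text{terms not involving } s_n)$. The crux is the exact identity $\Gamma_{k,n}^n = \Lambda_k^n$, obtained by expanding $\Delta(c_n(w)) = \frac{1}{D_n}\sum_k \lambda_k^n\, w^{kr}\tp w^{kr}$ in the $c$-basis and using $\Lambda_n^n\lambda_n^n = D_n$; it yields $\sum_{k=0}^n r_k\Gamma_{k,n}^n = \ab{a,w^{rn}}$ \emph{on the nose}, a unit by hypothesis, so $s_n$ exists. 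This also repairs the second soft spot in your sketch: you assert the relevant coefficient is governed by $\ab{a,w^{rn}}$ ``after inverting the appropriate integers, which lie in $R^\times$'', but the change-of-basis data $D_n$ and $\lambda_n^n$ are in general \emph{not} units in $\Zpl$ (in the integer-valued polynomial examples $D_n$ is essentially $n!$); the triangular change of basis is invertible only over $\Q$, and the paper's identity shows no inversion is needed. Your observation that $A_n = \Ann C_{n-1}$ is detected by vanishing against $1, w^r, \dots, w^{(n-1)r}$ is correct (clear denominators: $D_k\ab{a,c_k(w)} = \sum_j \lambda_j^k\ab{a,w^{jr}}$ and $R$ is a domain), and with the additive iteration, completeness and the Hausdorff property ($ab-1 \in \bigcap_n A_n = 0$), plus commutativity of $A$ (dual of a cocommutative coalgebra, $w$ being grouplike) to get a two-sided inverse, the argument closes; part (2) is then verbatim with the two-sided window of powers of $w$.
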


\begin{proof}
(See \cite[Theorem 3.8]{ccw2}.) We do part (1); part (2) is similar. 
If $a \in A^\times$ then clearly each $\ab{a,w^{rn}} \in R^\times$. Conversely, 
take $a \in A$ with $\ab{a,w^{rn}} \in R^\times$ for each $n \geq 0$ and write 
$a = \sum_{k \geq 0} r_k a_k$. Suppose inductively that 
we have $s_0, \dots, s_{i-1} \in R$ such that
\[
a(s_0 + s_1 a_1 + \dots + s_{i-1} a_{i-1}) \in 1 + A_i.
\]

For any $s_i$, the element $a(s_0 + s_1 a_1 + \dots + s_i a_i)$ is in
$1 + A_i$ and has $a_i$-coefficient
\[
s_i \Big(\sum_{k=0}^i r_k \Gamma_{k,i}^i\Big) + \textup{(terms not involving
$s_i$)},
\]
so we may choose $s_i$ such that $a(s_0 + s_1 a_1 + \dots + s_i a_i)$ is in $1 + A_{i+1}$ 
(\ie\ such that the above displayed expression is zero) provided $\sum_{k=0}^i r_k \Gamma_{k,i}^i$ is a unit.

Now,
\begin{align*}
\Delta(c_n(w)) &= \frac{1}{D_n} \sum_{k=0}^n \lambda_k^n w^{kr} \tp w^{kr}\\
&= \frac{1}{D_n} \sum_{k=0}^n \lambda_k^n \sum_{r,s=0}^k \Lambda_r^k \Lambda_s^k c_r(w) \tp c_s(w)\\
&= \sum_{r,s=0}^n \left( \sum_{k=0}^n \frac{\Lambda_r^k \Lambda_s^k \lambda_k^n}{D_n} \right) c_r(w) \tp c_s(w),
\end{align*}
so, in particular, $\Gamma_{n,i}^i = \Lambda_n^i \Lambda_i^i \lambda_i^i / D_i = \Lambda_n^i$ 
for any $i$ and $n$ (since all other terms are zero and $\Lambda_i^i \lambda^i_i = D_i$). Hence
\begin{align*}
\ab{a,w^{ir}} &= \sum_{k \geq 0} r_k \ab{a_k,w^{ir}}
= \sum_{k=0}^i r_k \sum_{j=0}^i \Lambda_j^i \ab{a_k,c_j(w)}\\
&= \sum_{k=0}^i r_k \Lambda_k^i
= \sum_{k=0}^i r_k \Gamma_{k,i}^i,
\end{align*}
as required.
\end{proof}

\section{$K$-theory cooperations and operations}
\label{SecKCoops}

In this section we will describe several examples from topology fitting into the framework of Section~\ref{SecRegCoalg}.
For those unfamiliar with the topological context of our examples we give a brief description.

In algebraic topology we work with generalized cohomology theories. Such a theory
$E^*$ is a well-behaved functor from the homotopy category of spaces (or spectra) to the
category of graded abelian groups. An important example is provided by periodic complex
$K$-theory $K^*$. 

The \emph{operations} of the theory $E^*$ are the natural transformations of the functor. We will
focus on those operations which are \emph{stable}, meaning that they are compatible with the
suspension functor in a standard way. Such stable operations, by the Yoneda Lemma, are given by
$E^*(E)=[E,E]$, the homotopy classes of maps from $E$ to itself, where $E$ is a spectrum
representing the cohomology theory $E^*$. In our examples, we will focus on the \emph{degree zero}
stable 
operations, given by $E^0(E)$. This has an algebra structure in which multiplication is given
by composition of operations.

One can also consider $E_*(E)=\pi_*(E\smash E)$, the so-called \emph{cooperations} of the theory
$E$, and $E_0(E)=\pi_0(E\smash E)$, the degree zero cooperations. In favourable cases, such as those
we study, $E^0(E)$ is the linear dual of $E_0(E)$.

We will take $E$ to be one of various cohomology theories related to 
periodic complex $K$-theory $K^*$.
The coalgebras we study are the degree zero cooperations $E_0(E)$
for such $E$. The comultiplication is dual to composition of operations.

Our notation for the theories we use is as follows. 
We let $K_{(p)}$ denote the periodic complex $K$-theory
spectrum with $p$-local coefficients; $k_{(p)}$ is the corresponding connective spectrum.
For an odd prime $p$, it was proved by Adams in~\cite[Lecture 4]{adams69} that 
these spectra split as wedges of suspensions of simpler spectra:
    $$
    K_{(p)}\simeq \bigvee_{i=0}^{p-2} \Sigma^{2i}G, \quad
    k_{(p)}\simeq \bigvee_{i=0}^{p-2} \Sigma^{2i}g.  
    $$
The spectra $G$ and $g$ appearing in these splittings are known as the
Adams summands of p-local periodic and connective $K$-theory respectively. We write
$KO$ and $ko$ for periodic and connective real $K$-theory respectively,
and $KO_{(2)}$, $ko_{(2)}$ for their $2$-localizations.
Let $R=\Z$ or $\Z_{(p)}$ as appropriate.

Writing $F$ for any of the periodic theories and $f$ for the corresponding connective
theory, we have that the degree zero stable operation algebra $F^0(F)$ 
is the $R$-linear dual of the degree zero cooperations $F_0(F)$. Similarly,
$f^0(f)$ is the $R$-linear dual of $F_0(f)$. 

Rational Laurent polynomials arise as the degree zero cooperations for rational
periodic $K$-theory:
\[
K_0(K)\otimes\Q\cong \Q[w, w^{-1}].
\]
Here $w$ can be thought of as $uv^{-1}$ where $K_*(K)\otimes\Q\cong \Q[u,v,u^{-1},v^{-1}]$
with $u,v\in K_2(K)$.

To simplify notation somewhat we will write, for example, $K_0(K)_{(p)}$
for ${K_{(p)}}_0(K_{(p)})$ and $K^0(K)_{(p)}$ for
${K_{(p)}}^0(K_{(p)})$. In the case of operations, this is an abuse
of notation, in the sense that ${K_{(p)}}^0(K_{(p)})$ is isomorphic to
a completed tensor product $K^0(K)\widehat{\otimes}\Zpl$,
not to $K^0(K){\otimes}\Zpl$.

All of our examples of cooperations are torsion-free and can naturally be viewed as
subalgebras of $\Q[w, w^{-1}]$. In the cooperations world
the relation between $F_0(F)$ and $F_0(f)$ is given by $F_0(F)=F_0(f)[w^{-r}]$ for suitable
$r$ (namely $r=1$ for $F=K_{(p)}$, $r=2$ for $F=KO_{(2)}$ and $r=p-1$ for $F=G$).

We will see that our examples are regular coalgebras in the
sense of the previous section.
In order to describe their bases we need some notation for polynomials.

\begin{defn}\label{Defnthetapolys}
If $n \geq 0$ and $(z_i)_{i\geq 1}$ is a sequence of elements of $\Q$, define the polynomial
\[
\theta_n(X;(z_i)) = \prod_{i=1}^n (X - z_i).
\]
Also for any $z\in\Q$, write $\theta_n(X;z)$ for
\[
\theta_n(X;(z^{i-1})) = (X-1)(X-z)\dots(X-z^{n-1}).
\]
\end{defn}

We now summarize some known results giving regular bases for cooperation coalgebras.
In all of our examples, bases can be expressed in terms of the above $\theta$-polynomials
for suitable choices of the sequence $(z_i)$.
See also~\cite{strongw} where these examples are discussed 
 in terms of integer-valued polynomials.

\begin{thm}

\begin{enumerate}\label{basisconnectivecoops}
\item
\cite[Prop.~3]{ccw}.
Let $p$ be an odd prime and let $q$ be primitive mod $p^2$. 
Then $K_0(k)_{(p)}$ is a regular $\Zp$-coalgebra with $\Zp$-basis 
\[
f_n(w) = \frac{\theta_n(w;q)}{\theta_n(q^n;q)}, \qquad\text{for }n \geq 0.
\]
\item
\cite[4.2]{ccw2}.
Let $p$ be an odd prime, let $q$ be primitive mod $p^2$ and let $\hat{q} = q^{p-1}$. Then
$G_0(g)$ is a regular $\Zp$-coalgebra with $\Zp$-basis 
\[
\hat{f}_n(w) = \frac{\theta_n(w^{p-1};\hat{q})}{\theta_n(\hat{q}^n;\hat{q})}, \qquad\text{for }n \geq 0.
\]
\item 
\cite[9.2]{ccw2}.
$KO_0(ko)_{(2)}$ is a regular $\Z_{(2)}$-coalgebra with $\Z_{(2)}$-basis
\[
h_n(w) = \frac{\theta_n(w^2;9)}{\theta_n(9^n;9)}, \qquad\text{for }n \geq 0.
\]
\item 
\label{PropBForK0k2}
\cite[Prop.~20]{ccw}
$K_0(k)_{(2)}$ is a regular $\Z_{(2)}$-coalgebra with $\Z_{(2)}$-basis 
\[
f_{2m}^{(2)}(w) = h_m(w)
\]
and
\[
f_{2m+1}^{(2)}(w) = \frac{3^m-w}{2.3^m}h_m(w)
\]
for $m \geq 0$.\qed
\end{enumerate}
\end{thm}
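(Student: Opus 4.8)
The four items are quoted from \cite{ccw} and \cite{ccw2}, which establish in each case that the coalgebra concerned is an $R$-subalgebra of $\Q[w,w^{-1}]$, contained in $\Q[w^r]$ for the stated $r$, and free over $R$ on the displayed $\theta$-polynomial basis. Taking these facts as given, the plan is to verify only the two clauses of Definition~\ref{DefnRegularCoalgebra}: that the displayed basis is regular, and that $w^{rn}\in C$ for every $n\geq 0$.

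For regularity I would simply count degrees. By Definition~\ref{Defnthetapolys}, $\theta_n(X;(z_i))=\prod_{i=1}^n(X-z_i)$ has degree $n$ in $X$, and the normalising constants $\theta_n(q^n;q)$, $\theta_n(\hat q^n;\hat q)$, $\theta_n(9^n;9)$ are nonzero because $q$, $\hat q=q^{p-1}$ and $9$ are integers greater than $1$. Substituting $X=w$, $X=w^{p-1}$ and $X=w^2$ gives $\deg f_n=n$, $\deg\hat f_n=n(p-1)$ and $\deg h_n=2n$, which are $rn$ for $r=1$, $p-1$, $2$ respectively. In part~(4), where $r=1$, one has $\deg f^{(2)}_{2m}=\deg h_m=2m$, while the factor $(3^m-w)/(2\cdot 3^m)$ raises the degree by one, so $\deg f^{(2)}_{2m+1}=2m+1$. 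In every case the $n$th basis element has degree exactly $n$, so each basis is regular.

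For the second clause I would argue uniformly using the ring structure. Since $\theta_1(X;z)=X-1$, the degree-one basis element is always a rational multiple of $w^r-1$: explicitly $f_1=(w-1)/(q-1)$, $\hat f_1=(w^{p-1}-1)/(\hat q-1)$, $h_1=(w^2-1)/8$, and $f^{(2)}_1=(1-w)/2$ in part~(4). In each case the normalising factor lies in $R$, so $w^r$ is an $R$-integral combination of $c_0=1$ and $c_1$; for instance $w=f_0+(q-1)f_1$ in part~(1) and $w=f^{(2)}_0-2f^{(2)}_1$ in part~(4). Hence $w^r\in C$. Since each $C=F_0(f)$ is a subalgebra of $\Q[w,w^{-1}]$ under ordinary multiplication and contains $1$, it is closed under multiplication by $w^r$, and therefore $w^{rn}=(w^r)^n\in C$ for all $n$ by induction on $n$.

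The step I would expect to cause trouble is the second clause in part~(4). For parts~(1)--(3) one can alternatively see $w^{rn}\in C$ by identifying $C$ with the polynomials in $\Q[w^r]$ taking $R$-values on the powers of $q$ (respectively $\hat q$, $9$), as in \cite{strongw}, since $w^{rn}$ visibly takes integer values there; but this evaluation description is awkward in part~(4), because $\Z_2^\times$ is not topologically cyclic and so $K_0(k)_{(2)}$ is not the ring of $\Z_{(2)}$-valued polynomials on the powers of a single unit --- which is precisely why its basis splits into the even family $f^{(2)}_{2m}=h_m$ and the odd family. The uniform ring-closure argument of the previous paragraph avoids this subtlety altogether, which is why I would adopt it. The only genuinely hard input then lives in the cited basis results themselves, whose proofs rest on number-theoretic facts such as the integrality of Gaussian binomial coefficients and the primitivity of $q$ mod $p^2$; these I would take from \cite{ccw,ccw2} rather than reprove.
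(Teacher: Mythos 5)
Your proposal is correct and consistent with the paper's treatment: the theorem is stated there with citations to \cite{ccw} and \cite{ccw2} and a \qed but no proof, the only content beyond the cited basis results being exactly the routine verification of the two clauses of Definition~\ref{DefnRegularCoalgebra} that you carry out. Both your degree count and your ring-closure argument for $w^{rn}\in C$ (legitimate, since the paper itself notes that all these cooperation coalgebras are subalgebras of $\Q[w,w^{-1}]$, and indeed uses the bialgebra structure of $K_0(k)_{(2)}$ later in Section~\ref{SecApplications}) are sound, so nothing is missing.
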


The dual topological bases 
can all be written as polynomials in Adams operations, which arise in this
context as evaluation maps.

\begin{defn}
For $\beta\in\Q$ let the \emph{Adams operation}
$\Psi^\beta:\Q[w,w^{-1}]\to\Q$ be the evaluation map,
$\Psi^\beta(f(w))=f(\beta)$. 
\end{defn}

We also write $\Psi^\beta$ for the restriction of this map to an
$R$-subcoalgebra $C$ of $\Q[w, w^{-1}]$. For suitable choices of $\beta$ this
evaluation map will take values in $R\subset\Q$ and so can be viewed as an element
of the dual algebra to $C$.

\begin{thm}\label{ThmTBForOps}
\begin{enumerate}
\item
\cite[2.2]{ccw2}.
If $p$ is odd and $q$ is primitive mod $p^2$, 
\[
\{\theta_n(\Psi^q;q)\,|\, n\geq 0\}
\] 
is the dual topological $\Zp$-basis for $k^0(k)_{(p)}$ 
to the basis $f_n(w)$ for $K_0(k)_{(p)}$.
\item 
\cite{Lellmann}. 
If $p$ is odd, $q$ is primitive mod $p^2$ and $\hat{q} = q^{p-1}$, 
\[
\{\theta_n(\Psi^q;\hat{q})\,|\, n\geq 0\}
\]
is the dual topological $\Zp$-basis for 
$g^0(g)$ to the basis $\hat{f}_n(w)$ for $G_0(g)$.
\item 
\cite[9.3]{ccw2}. 
The dual topological $\Z_{(2)}$-basis for $ko^0(ko)_{(2)}$ to the basis $h_n(w)$ for $KO_0(ko)_{(2)}$
is  $\{\theta_n(\Psi^3;9)\,|\,n\geq 0\}$.
\qed
\end{enumerate}
\end{thm}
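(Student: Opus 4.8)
All three parts share a common shape, and the plan is to treat them uniformly. In each case Theorem~\ref{basisconnectivecoops} exhibits the coalgebra $C$ with a regular basis of the form
\[
c_n(w) = \frac{\theta_n(w^r;s)}{\theta_n(s^n;s)}, \qquad s = \beta^r,
\]
where, using Definition~\ref{Defnthetapolys} and the relations $\hat q = q^{p-1}$, $9 = 3^2$, the triple $(\beta,r,s)$ is $(q,1,q)$ in part~(1), $(q,p-1,\hat q)$ in part~(2), and $(3,2,9)$ in part~(3). Writing $A = C^*$ and $a_n = c_n(w)^*$ for the dual topological basis, the assertion in every case is exactly that $\theta_n(\Psi^\beta;s) = a_n$, i.e.\ that $\ab{\theta_n(\Psi^\beta;s),c_m(w)} = \delta_{nm}$ for all $m$. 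So I would simply compute this pairing.

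The computational engine is the behaviour of evaluation maps under convolution. Since each $w^{rk}$ is grouplike, $\Delta(w^{rk}) = w^{rk}\tp w^{rk}$, so $\ab{\Psi^\alpha*\Psi^{\alpha'},w^{rk}} = \alpha^{rk}(\alpha')^{rk} = \ab{\Psi^{\alpha\alpha'},w^{rk}}$; as an element of $A$ is determined by its values on the $w^{rk}$, this forces $\Psi^\alpha*\Psi^{\alpha'} = \Psi^{\alpha\alpha'}$, whence $(\Psi^\beta)^j = \Psi^{\beta^j}$ and $\ab{\Psi^{\beta^j},w^{rk}} = (\beta^r)^{jk} = s^{jk}$. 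First I would check that $\Psi^\beta$ really lies in $A$: the displayed form of $c_n$ gives $c_n(\beta) = \theta_n(s;s)/\theta_n(s^n;s) \in \{0,1\}\ss R$ (the factor $s-s$ kills it for $n\geq 2$), so $\Psi^\beta$ is $R$-valued on $C$ and $\theta_n(\Psi^\beta;s)$, a monic integer polynomial in $\Psi^\beta$, lies in $A$. Evaluating this product in $A$ on the grouplike $w^{rk}$ then yields the key identity
\[
\ab{\theta_n(\Psi^\beta;s),\,w^{rk}} = \prod_{i=1}^n\bigl(s^k - s^{i-1}\bigr) = \theta_n(s^k;s).
\]

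To finish I would split into two ranges. For $0\leq k\leq n-1$ the factor with $i=k+1$ vanishes, so $\theta_n(s^k;s)=0$; hence $\theta_n(\Psi^\beta;s)$ kills every $w^{rk}$ with $k\leq n-1$, so it annihilates $C_{n-1} = R\{c_0(w),\dots,c_{n-1}(w)\}$, that is $\theta_n(\Psi^\beta;s)\in A_n = \Ann C_{n-1}$, giving $\ab{\theta_n(\Psi^\beta;s),c_m(w)} = 0$ for $m<n$. For $m\geq n$ I would instead expand $\theta_n(X;s) = \sum_{j=0}^n b_{n,j}X^j$ with $b_{n,j}\in\Z$ and $b_{n,n}=1$, so that $\theta_n(\Psi^\beta;s) = \sum_{j=0}^n b_{n,j}\Psi^{\beta^j}$ and
\[
\ab{\theta_n(\Psi^\beta;s),c_m(w)} = \sum_{j=0}^n b_{n,j}\,c_m(\beta^j), \qquad c_m(\beta^j) = \frac{\theta_m(s^j;s)}{\theta_m(s^m;s)}.
\]
Since $c_m(\beta^j)$ vanishes for $j<m$ and equals $1$ for $j=m$, every term with $j\leq n<m$ dies when $m>n$, while for $m=n$ only the $j=n$ term survives and contributes $b_{n,n}=1$. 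Thus the pairing is $\delta_{nm}$ throughout, which is the claim.

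The main obstacle is not a hard estimate but making the two bookkeeping conventions line up: one must see that a degree-$n$ convolution polynomial in $\Psi^\beta$ is supported on exactly the evaluations $\Psi^{\beta^0},\dots,\Psi^{\beta^n}$, and that $\beta^0,\dots,\beta^{m-1}$ are precisely the roots of $c_m$, which is what forces the ``above-diagonal'' case $m>n$ to vanish. The subsidiary points needing care are the non-degeneracy $\theta_n(s^n;s)\neq 0$ and the distinctness of the powers $s^{i-1}$ in the normalisation; both follow from the multiplicative order of $s=\beta^r$ being large enough, i.e.\ from $q$ being primitive mod $p^2$ in parts~(1) and~(2) and from the order of $9$ in part~(3), together with the routine verification that each basis of Theorem~\ref{basisconnectivecoops} genuinely has the stated form $c_n(w)=\theta_n(w^r;s)/\theta_n(s^n;s)$.
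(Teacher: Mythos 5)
Your proposal is correct, but note that the paper contains no proof of this theorem at all: all three parts are quoted results, cited to \cite[2.2]{ccw2}, \cite{Lellmann} and \cite[9.3]{ccw2}, with the \textup{qed} symbol marking them as known. So what you have written is not an alternative to an internal argument but a self-contained reconstruction, and it is a sound one given the paper's conventions: since $\Psi^\beta$ is \emph{defined} as the evaluation map and $f^0(f)$ is identified with the $R$-linear dual of $F_0(f)$, the theorem reduces to the purely algebraic claim $\ab{\theta_n(\Psi^\beta;s),c_m(w)}=\delta_{nm}$, which you verify correctly. The key steps all check out: $\Psi^{\alpha}*\Psi^{\alpha'}=\Psi^{\alpha\alpha'}$ holds because the $w^{rk}$ are grouplike and an element of the dual is determined by its values on them (a fact the paper itself records just before Lemma~\ref{LemmaDetectingUnits}); $\Psi^\beta$ is $R$-valued on $C$ since $c_n(\beta)\in\{0,1\}$; the identity $\ab{\theta_n(\Psi^\beta;s),w^{rk}}=\theta_n(s^k;s)$ kills everything below the diagonal via $A_n=\Ann C_{n-1}$; and the expansion $\theta_n(\Psi^\beta;s)=\sum_{j=0}^n b_{n,j}\Psi^{\beta^j}$ with $c_m(\beta^j)=\theta_m(s^j;s)/\theta_m(s^m;s)$ kills everything above it and leaves $b_{n,n}=1$ on it, so the pairing is exactly Kronecker delta rather than merely triangular. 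Your uniform treatment of the three cases through the single relation $s=\beta^r$ is essentially the computation done case-by-case in the cited sources; what your version buys is independence from the references and the observation that one calculation handles $k_{(p)}$, $g$ and $ko_{(2)}$ simultaneously, while the genuinely topological inputs (that these cooperation coalgebras are regular with the stated bases, and the operations/cooperations duality) are taken on faith from Theorem~\ref{basisconnectivecoops} and Section~\ref{SecKCoops}, exactly as the paper takes them. Your flagged subsidiary points are harmless: $\theta_m(s^m;s)\neq 0$ and the distinctness of the powers $s^{i-1}$ follow since $s\in\{q,\hat q,9\}$ has $|s|\geq 2$ in every case ($q$ primitive mod $p^2$ forces $q\neq 0,\pm 1$).
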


Using the method of Adams and Clarke~\cite{ac} one obtains the following bases for the periodic
versions.

\begin{thm}
\label{periodiccoopsbases}
\begin{enumerate}
\item
\cite[Cor.~6]{ccw}. 
If $p$ is an odd prime, then the Laurent polynomials
$w^{-\floor{n/2}}f_n(w)$,
for $n \geq 0$, form a $\Z_{(p)}$-basis for $K_0(K)_{(p)}$.
\item If $p$ is an odd prime,
$\{w^{-\floor{n/2}(p-1)}\hat{f}_n(w)\,|\, n\geq 0\}$ is a $\Z_{(p)}$-basis for $G_0(G)$.
\item $\{ w^{-n}h_n(w)\,|\,n\geq0\}$ is a $\Z_{(2)}$-basis for $KO_0(KO)_{(2)}$.
\item $\{ F^{(2)}_n(w) = w^{-\floor{n/2}}f^{(2)}_n(w)\,|\,n\geq 0\}$ is a $\Z_{(2)}$-basis for $K_0(K)_{(2)}$.
\qed
\end{enumerate}
\end{thm}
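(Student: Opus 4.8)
The plan is to derive all four parts from a single statement about regular coalgebras. Fix a regular $R$-coalgebra $C$ with regular basis $\{c_n(w)\}$ and $c_0 = 1$, write $u = w^r$ (so $c_n(w)$ has degree $n$ in $u$) and $D = C[w^{-r}]$; I would prove that the symmetrically shifted Laurent polynomials
\[
y_n = w^{-\floor{n/2}r}c_n(w), \qquad n \geq 0,
\]
form an $R$-basis of $D$. This yields the four cases on taking $(r,c_n)$ to be $(1,f_n)$, $(p-1,\hat f_n)$, $(2,h_n)$ and $(1,f_n^{(2)})$ from Theorem~\ref{basisconnectivecoops}. A degree count shows $y_k$ has $u$-powers running from $u^{-\floor{k/2}}$ to $u^{\lceil k/2\rceil}$, so $y_k \in D_k \ss D_n$ for $k \leq n$; and among $y_0,\dots,y_n$ only $y_n$ attains the extreme power (the top power $u^{\lceil n/2\rceil}$ when $n$ is odd, the bottom power $u^{-\floor{n/2}}$ when $n$ is even), which gives linear independence.

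The backbone is an induction on the filtration $D = \bigcup_n D_n$, proving that $\{y_k : k \leq n\}$ is an $R$-basis of the rank-$(n+1)$ module $D_n$. The inductive step needs one structural input: clearing the pole should land back in the connective coalgebra, that is $D \cap \Q[w^r] = C$, equivalently $D_n = w^{-\floor{n/2}r}C_n$. Granting this, the rank-one quotient $D_n/D_{n-1}$ is detected by a single coefficient functional $D_n \to R$ — the top coefficient $f \mapsto [u^{\lceil n/2\rceil}]f$ for $n$ odd, the bottom coefficient $f \mapsto [u^{-\floor{n/2}}]f$ for $n$ even — whose kernel is exactly $D_{n-1}$, so the step amounts to showing that $y_n$ maps to a generator of the image.

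For odd $n = 2m+1$ this is automatic: since $D_n = w^{-mr}C_n$ and $C_n = \bigoplus_{k \leq n} R\,c_k(w)$, only $c_n$ contributes in top degree, so the image is $R\cdot[u^n]c_n$, and $[u^{m+1}]y_n = [u^n]c_n$ is precisely that generator. For even $n = 2m$ the bottom-coefficient functional sends $w^{-mr}g \mapsto g(0)$, so its image is the ideal $\sum_{k \leq n} R\,c_k(0) \ss R$, and the step succeeds exactly when $c_n(0)$ generates it, i.e.\ when $v_p(c_n(0))$ is minimal among $k \leq n$. As the filtration is increasing this reduces to showing $v_p(c_n(0))$ is non-increasing in $n$, which I would read off from the one-step ratio $c_n(0)/c_{n-1}(0) = -(q^n-1)^{-1}$ for the $\theta$-polynomial bases (with $q$ replaced by $\hat q = q^{p-1}$, or by $9$, in the $G$ and $KO$ cases, and with an extra factor $\tfrac12$ at the odd indices for $K_{(2)}$): each such ratio has non-positive $p$-valuation because $q$ is primitive mod $p^2$ (respectively because $9-1, 9^2-1,\dots$ are divisible by $8$).

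The main obstacle is the structural input $D \cap \Q[w^r] = C$, and it is here that the arithmetic of $K$-theory, rather than any formal property of regular coalgebras, is essential: for a general regular coalgebra the shifted elements need not span, as the integer-valued polynomials over $\Z$ already show, where $(w-1)/2$ lies in $D_1$ but not in $C_1$. Establishing $D \cap \Q[w^r] = C$ is the assertion that the connective cooperations are exactly the polynomial part of the periodic ones; this, together with the monotonicity of $v_p(c_n(0))$, is the computation I would carry out case by case from the explicit bases of Theorem~\ref{basisconnectivecoops}, following the method of Adams and Clarke.
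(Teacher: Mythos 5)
Your proposal is, in substance, exactly the argument the paper points to: the paper offers no proof of this theorem, citing \cite[Cor.~6]{ccw} for part (1) and invoking ``the method of Adams and Clarke''~\cite{ac} for the remaining parts, and that method is precisely your scheme --- the increasing filtration $D=\bigcup_n D_n$ by rank-$(n+1)$ free modules, the rank-one quotients $D_n/D_{n-1}$ detected by the extreme ($u$-top for $n$ odd, $u$-bottom for $n$ even) coefficient functional, and the verification that the shifted element $w^{-\floor{n/2}r}c_n(w)$ hits a generator. You have correctly isolated the two non-formal inputs: the identification $D\cap\Q[w^r]=C$ of the connective cooperations as the polynomial part of the periodic ones (your integer-valued-polynomial counterexample rightly shows this is not automatic for regular coalgebras; in each $K$-theoretic case it follows from the evaluation/integrality descriptions of the cooperations in \cite{ccw}, \cite{ccw2}, \cite{strongw}, since dividing by $w^r$ preserves the condition that values at the relevant units lie in $\Zp$), and the minimality of $\nu_p(c_n(0))$ among $k\leq n$ at the even steps. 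Your ratio computations are correct: $f_n(0)=(-1)^n/\prod_{j=1}^n(q^j-1)$ gives $f_n(0)/f_{n-1}(0)=-(q^n-1)^{-1}$, of non-positive valuation, and similarly for $\hat{q}$, $9$ and the extra factor $\tfrac12$ in the $K_{(2)}$ case; the odd steps are automatic since only $c_n$ contributes in top degree.

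One caveat on part (3): your uniform shift produces $w^{-2\floor{n/2}}h_n(w)$, which for odd $n$ differs from the printed basis $w^{-n}h_n(w)$. The printed form cannot be taken literally: for odd $n$ the element $w^{-n}h_n(w)$ involves only odd powers of $w$, whereas $KO_0(KO)_{(2)}=KO_0(ko)_{(2)}[w^{-2}]\ss\Q[w^2,w^{-2}]$. So your version, which matches the pattern of part (2) (shift by $u^{-\floor{n/2}}$ with $u=w^r$), is the internally consistent statement, and the discrepancy is a slip in the theorem as printed rather than a gap in your argument.
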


The next step is to give topological bases for the dual algebras of these periodic objects. Define the polynomials
\[
\Theta_n(X;q) = \theta_n\big(X;\big(q^{(-1)^i\floor{i/2}}\big)\big).
\]

\begin{thm}\label{ThmTBForPeriodicOps}
\begin{enumerate}
\item
\label{ThmTBForK0Kp}
\cite[6.2]{ccw2}.
The set $\{ \Theta_n(\Psi^q;q)\,|\, n \geq 0 \}$ is a topological $\Z_{(p)}$-basis for $K^0(K)_{(p)}$.
\item
\label{ThmTBForgAndG}
The set $\{ \Theta_n(\Psi^q;\hat{q})\,|\,n \geq 0 \}$ is a topological $\Z_{(p)}$-basis for $G^0(G)$.
\item 
\cite[9.3]{ccw2}. The set $\{ \Theta_n(\Psi^3;9)\,|\,n \geq 0 \}$ is a topological $\Z_{(2)}$-basis for $KO^0(KO)_{(2)}$.\qed
\end{enumerate}
\end{thm}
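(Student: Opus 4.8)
The plan is to prove that $\{\Theta_n(\Psi^q;\hat q)\,|\,n\geq 0\}$ is the topological $\Z_{(p)}$-basis of $B=G^0(G)=(G_0(G))^*$ obtained from the comodule basis of $D=G_0(G)$ given in Theorem~\ref{periodiccoopsbases}(2). Recall that $D$ is a regular coalgebra with $r=p-1$, with chosen basis $d_n(w)=w^{-\floor{n/2}(p-1)}\hat f_n(w)$, filtration $D_n=\Z_{(p)}\{d_0,\dots,d_n\}$, and dual filtration $B_n=\Ann D_{n-1}$; let $\{d_n^*\}$ denote the topological basis of $B$ dual to $\{d_n\}$. I would use the standard criterion: since $B$ is complete and Hausdorff, any family $\{b_n\}$ with $b_n\in B_n$ whose image in $B_n/B_{n+1}\iso\Z_{(p)}$ is a unit multiple of the class of $d_n^*$ is again a topological $\Z_{(p)}$-basis (one solves for the transition coefficients by back-substitution, which converges in the filtration topology). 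Thus it suffices to establish that $\Theta_n(\Psi^q;\hat q)\in B$, that $\Theta_n(\Psi^q;\hat q)\in B_n$, and that $\ab{\Theta_n(\Psi^q;\hat q),d_n}\in\Z_{(p)}^\times$.

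First I would record the basic pairing. Since $w^{p-1}$ is grouplike and $\Psi^{\beta}*\Psi^{\beta'}=\Psi^{\beta\beta'}$ with unit $\Psi^1$, the operation $\Theta_n(\Psi^q;\hat q)=\prod_{i=1}^n(\Psi^q-\hat q^{e_i})$, where $e_i=(-1)^i\floor{i/2}$, acts on the monomial $w^{(p-1)k}$ by
\[
\ab{\Theta_n(\Psi^q;\hat q),w^{(p-1)k}}=\prod_{i=1}^n(\hat q^{k}-\hat q^{e_i}),
\]
using $q^{(p-1)k}=\hat q^{k}$. The exponents $e_1,\dots,e_n$ run through the Adams--Clarke zig-zag $0,1,-1,2,-2,\dots$, so $\{e_1,\dots,e_n\}=\{-\floor{(n-1)/2},\dots,\lceil(n-1)/2\rceil\}$ is a block of $n$ consecutive integers centred at the origin, inside the commutative subalgebra of $B$ generated by $\Psi^q$. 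The triangularity is then purely combinatorial: for $m<n$ the element $d_m(w)$ is a Laurent polynomial in $w^{p-1}$ supported on the exponents $-\floor{m/2},\dots,\lceil m/2\rceil$, all of which lie in $\{e_1,\dots,e_n\}$, so $\Theta_n(\Psi^q;\hat q)$ annihilates $d_m$ and hence (granting integrality) lies in $B_n$. For $m=n$ exactly one monomial of $d_n(w)$ survives, and $\ab{\Theta_n(\Psi^q;\hat q),d_n}$ is its coefficient times $\prod_{j}(\hat q^{k_0}-\hat q^{j})$ for the adjacent exponent $k_0$.

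The genuinely harder input is the $p$-adic analysis needed to show (i) that each pairing $\ab{\Theta_n(\Psi^q;\hat q),d_m}$ lies in $\Z_{(p)}$, so that $\Theta_n(\Psi^q;\hat q)$ is really an element of $B$, and (ii) that the surviving diagonal term is a unit. I would stress that part~(1) cannot simply be quoted here: although the formulas are formally those of the $K_{(p)}$-case under $q\mapsto\hat q$ and $w\mapsto w^{p-1}$, the hypotheses differ, since $q$ is primitive mod $p^2$ whereas $\hat q=q^{p-1}$ has order $p$ and satisfies $\hat q\equiv 1\pmod p$. The estimates must therefore be redone from the valuation formula $v_p(\hat q^{a}-\hat q^{b})=1+v_p(a-b)$ for $a\neq b$ (which follows from $v_p(\hat q-1)=1$). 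I expect (ii) to be the main obstacle: one must expand $\ab{\Theta_n(\Psi^q;\hat q),d_n}$ against the normalising denominator $\theta_n(\hat q^n;\hat q)$ appearing in $\hat f_n$ and verify that the accumulated valuations cancel exactly.

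Finally, a convenient organising device is the coalgebra automorphism $\tau$ of $D$ given by multiplication by $w^{p-1}$; its dual $\tau^*$ is an algebra automorphism of $B$ with $\ab{\tau^*\Theta_n(\Psi^q;\hat q),w^{(p-1)k}}=\hat q^{\,n}\prod_{i}(\hat q^{k}-\hat q^{e_i-1})$, so that $\tau^*$ translates the centred root block by one step and rescales by the unit $\hat q^{\,n}$. Applying $(\tau^*)^{\floor{(n-1)/2}}$ carries $\theta_n(\Psi^q;\hat q)$, the known connective basis element of Theorem~\ref{ThmTBForOps}(2), to a unit multiple of $\Theta_n(\Psi^q;\hat q)$; since $\tau^*$ is an automorphism of $B$, this reduces the integrality and unit claims on $D=G_0(G)$ to estimates involving the connective basis $\hat f_n$ on $G_0(g)$ together with the effect of the negative powers of $w^{p-1}$. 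Combining these valuation estimates with the vanishing yields $\Theta_n(\Psi^q;\hat q)\in B_n$ and $\ab{\Theta_n(\Psi^q;\hat q),d_n}\in\Z_{(p)}^\times$, and the criterion of the first paragraph then completes the proof.
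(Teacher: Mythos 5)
The first thing to say is that the paper contains no proof of this theorem for you to match: parts (1) and (3) are quoted from \cite{ccw2} (Theorems 6.2 and 9.3 there), and part (2) is simply asserted as the analogue for the Adams summand, provable by the same method (it appears in the first author's thesis \cite{hignett}). So the only meaningful comparison is with the method of the cited source, and your sketch does follow essentially that route: triangularity of $\Theta_n(\Psi^q;\hat{q})$ against the regular basis $d_n(w)=w^{-\floor{n/2}(p-1)}\hat{f}_n(w)$ of $D=G_0(G)$, a unit on the diagonal, and completeness of $B=D^*$ to invert the change of basis. Your combinatorial observations are all correct: the root exponents $e_i=(-1)^i\floor{i/2}$ fill the block of $n$ consecutive integers $[-\floor{(n-1)/2},\lceil(n-1)/2\rceil]$; the $w^{p-1}$-support of $d_m$ lies inside this block for $m<n$, so $\ab{\Theta_n(\Psi^q;\hat{q}),d_m}$ vanishes identically (not merely modulo integrality); and conjugating by the grouplike $w^{p-1}$ dualizes to an algebra automorphism $\tau^*$ of $B$ translating the root block — a pleasant reformulation of the Adams--Clarke shifting trick that is not in the literature in this form.

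Two criticisms, one in each direction. You overstate the difficulty of integrality: no $p$-adic estimates are needed for your point (i). Evaluation at $q$ is $\Zp$-valued on $G_0(g)$ (for instance because $\Psi^q=1+\theta_1(\Psi^q;\hat{q})$ lies in the dual algebra by Theorem~\ref{ThmTBForOps}(2)), and it extends to $D=G_0(g)[w^{-(p-1)}]=\bigcup_n w^{-n(p-1)}G_0(g)$ since $(w^{-n(p-1)}f)(q)=\hat{q}^{-n}f(q)$ with $\hat{q}\in\Zp^\times$; hence $\Psi^q\in B$, and since $B$ is closed under convolution products and $\Zp$-linear combinations, $\Theta_n(\Psi^q;\hat{q})\in B$ for free. (This also removes the circularity lurking in your final paragraph, where $\tau^*$ is applied to $\theta_n(\Psi^q;\hat{q})$ before that element is known to lie in $B$ rather than just in $g^0(g)$.) Conversely, you leave your self-identified ``main obstacle'' (ii) — that the diagonal pairing is a unit — as an expectation rather than a proof, which as written is a genuine gap at the central claim. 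It closes easily, and no delicate cancellation of accumulated valuations occurs: the diagonal is exactly a power of $\hat{q}$. For $n$ even the unique surviving monomial of $d_n$ is $w^{-(n/2)(p-1)}$, with coefficient $\hat{q}^{n(n-1)/2}/\theta_n(\hat{q}^n;\hat{q})$, and
\[
\prod_{j=-(n/2-1)}^{n/2}\bigl(\hat{q}^{-n/2}-\hat{q}^{j}\bigr)
=\hat{q}^{-n^2/2}\prod_{m=1}^{n}\bigl(\hat{q}^{m}-1\bigr),
\qquad
\theta_n(\hat{q}^{n};\hat{q})=\hat{q}^{n(n-1)/2}\prod_{m=1}^{n}\bigl(\hat{q}^{m}-1\bigr),
\]
so $\ab{\Theta_n(\Psi^q;\hat{q}),d_n}=\hat{q}^{-n^2/2}$; for $n$ odd the surviving monomial is the top one and the same manipulation gives $\hat{q}^{-n(n-1)/2}$. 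With this computation inserted and the integrality shortcut above, your argument is complete and is in substance the argument of \cite{ccw2} that the paper invokes.
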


It is a bit more complicated to describe topological bases for $k^0(k)_{(2)}$ and $K^0(K)_{(2)}$. 
The former is done in~\cite[8.2]{ccw2} and the latter can be done using Theorem 5.2 of~\cite{strongw}, 
but this does not give the
answers in any particularly nice form. Instead our strategy for these cases will be to
work directly with the cooperations.

\section{Locally finitely generated modules}
\label{SecLFG}

In this section we study modules which are locally finitely generated over the ground ring $R$. 
As we explain, it is easy to see that discrete modules
are locally finitely generated over $R$. We will give conditions under which the converse is true. 

Consider an $R$-algebra $A$ with a filtration $A_\alpha$ satisfying the conditions of Definition~\ref{Defnprofg}
and a discrete $A$-module $M$. 
We will consider only $R = \Z$ and $R = \Zp$ for $p$ prime. If $x \in M$, by Lemma~\ref{LemmaCharDisc},
there is $\alpha$ with $A_\alpha x=0$. 
Then $Ax = (A/A_\alpha)x$, so $Ax$ is finitely generated as an $R$-module, as $A/A_\alpha$ is.

\begin{defn}
A module $M$ over an $R$-algebra $A$ is \emph{locally finitely generated over $R$} if 
$Ax$ is a finitely generated $R$-module for every $x \in M$. The category 
$\LFG{A}{}$ is the full subcategory of $\Mod{A}{}$ whose objects are the 
$A$-modules which are locally finitely generated over $R$.
\end{defn}

Of course, if $A$ is itself finitely generated as an $R$-module then every $A$-module
is locally finitely generated over $R$. The definition is of interest in the case where
$A$ is not itself finitely generated over $R$. In this case, $A$ regarded as a module over
itself is clearly not locally finitely generated.
\medskip

The above discussion shows that $\Disc{A}{} \ss \LFG{A}{}$. In this section we give our main technical result, 
Theorem~\ref{ThmLFGIsDiscrete}, providing 
conditions under which these subcategories of $\Mod{A}{}$ are equal. 
At the end of the section, Theorem~\ref{ThmLFGIsDiscrete(C)} gives a 
version of the conditions formulated more directly in terms of the structure of a regular coalgebra
 to which $A$ is dual.

In the next section, we show that our method applies to the algebras of stable operations 
of many variants of topological $K$-theory. 

\begin{thm}\label{ThmLFGIsDiscrete}
Let $p$ be a prime and $A$ be a topological $\Z_{(p)}$-algebra with a topological 
basis $\{ a_n\,|\,n \geq 0 \}$. If for every $l > 0$ there is an infinite set $N_l \ss \N_0$ such that
\begin{enumerate}
\item $1-a_{n-m}$ is a unit in $A$ for every $m,n \in N_l$ with $m<n$, and
\item $a_m a_n \equiv a_{m+n} \text{ mod } p^l$ whenever $m \in N_l$ and $n \in \N_0$,
\end{enumerate}
then $\Disc{A}{} = \LFG{A}{}$.
\end{thm}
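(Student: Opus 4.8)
The plan is to prove the nontrivial inclusion $\LFG{A}{} \ss \Disc{A}{}$; the reverse was noted before the statement. I would fix $M \in \LFG{A}{}$ and $x \in M$, so that $W := Ax$ is a finitely generated $\Zp$-module, and by Lemma~\ref{LemmaCharDisc} aim to produce an $N$ with $A_N x = 0$. Since $a_n \in A_N$ for $n \geq N$, this forces me first to show that $y_n := a_n x$ vanishes for all large $n$, and the two hypotheses are fed in through reduction modulo $p^l$.

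Fix $l > 0$. As $W$ is finitely generated over $\Zp$, the quotient $Q := W/p^l W$ is a \emph{finite} $A$-module; write $\xi$ and $\bar y_n$ for the images of $x$ and $y_n$. Because $p^l A$ annihilates $Q$, hypothesis~(2) says that for $m \in N_l$ left multiplication by $a_m$ acts on $Q$ as the shift $\bar y_n \mapsto \bar y_{m+n}$. Since $Q$ is finite and $N_l$ is infinite, the pigeonhole principle yields, above any prescribed threshold, indices $m < n$ in $N_l$ with $\bar y_m = \bar y_n$. Writing $d = n-m$, hypothesis~(2) also gives $a_n \equiv a_m a_d \bmod p^l$, whence $a_m(1-a_d)\xi = 0$ in $Q$; since $1 - a_d$ is a unit by hypothesis~(1), I cancel it to conclude $\bar y_m = 0$, that is $a_m x \in p^l W$. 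Feeding this back through the shift then shows $a_n x \in p^l W$ for every $n$ beyond some $N(l)$.

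The remaining task is to upgrade this family of congruences, one for each $l$, to honest vanishing. Splitting $W$ into a free part and a torsion part (the latter killed by a fixed $p^L$), the torsion component of $y_n$ already vanishes for $n \geq N(L)$, so the problem is confined to the free quotient $F \iso \Zp^{\,s}$, where $z_n := a_n\bar x$ satisfies $z_n \in p^l F$ for $n \geq N(l)$ and every $l$. Here the decisive point is that the ground ring $\Zp$ is \emph{not} $p$-adically complete: were infinitely many $z_n$ nonzero, their $p$-adic valuations would tend to infinity, and composing the evaluation $a \mapsto a\bar x$ with a coordinate of $F$ and with a suitably chosen convergent sum $\sum r_n a_n \in A$ would land outside $\Zp$, which is impossible. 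Hence $z_n = 0$ for large $n$, and with the torsion already gone, $y_n = a_n x = 0$ for all $n \geq N$; a final check that the infinite-sum action of $A_N$ then kills $x$ completes the discreteness criterion.

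I expect two steps to carry the real weight. The first is the cancellation of the unit $1 - a_d$: it hinges on the precise way hypotheses~(1) and~(2) interlock, and is cleanest exactly because the shift operators arising from~(2) commute, so the unit can be moved past $a_m$; in the intended applications $A$ is commutative and this is automatic. The second, and I believe the genuine crux, is the promotion from ``$a_n x \in p^l W$ for every $l$'' to ``$a_n x = 0$ eventually'': this is where finite generation over $\Zp$ and, above all, the failure of $\Zp$ to be $p$-adically complete must be exploited, and it is precisely the delicacy of the infinite-sum action on $M$ that has to be controlled there.
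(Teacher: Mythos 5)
Your skeleton --- reduce to $W=Ax$, pigeonhole on a finite quotient, cancel the unit $1-a_d$, split off the torsion --- is sound and close in outline to the paper's (which pigeonholes on the torsion submodule $T$ with $p^sT=0$, getting $a_mT=0$ exactly, via Lemma~\ref{LemmaFiniteIsTame}), and your instincts about where the weight falls are accurate. But at precisely the two places you name as the crux, the proposal stops where the content begins, and as literally written those steps fail. The action of $A$ on $M$ is not assumed continuous, so for $a=\sum_{n\geq N}r_na_n$ nothing entitles you to compute a coordinate of $a\bar x$ as the $p$-adic sum $\sum_n r_nz_n$: your ``suitably chosen convergent sum landing outside $\Zp$'' presupposes exactly this interchange of the action with an infinite sum. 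Likewise your closing ``final check that the infinite-sum action of $A_N$ kills $x$'' is not a check but the identical problem again: knowing $a_nx=0$ for all $n\geq N$ says nothing, by itself, about $A_Nx$. The missing ingredient is the paper's Lemma~\ref{LemmaExtensionsLemma}: applying hypothesis (2) termwise to a tail and using completeness of the filtration, every $a\in A_{m+n}$ with $m\in N_l$ can be written $a=b+p^l\theta$ with $b\in a_mA_n$ and $\theta\in A$. Granting commutativity (which you rightly flag; it is needed to cancel $1-a_d$ and to move $a_m$ past other factors, and the paper uses it tacitly as well), this lemma together with your pigeonhole output $a_mx\in p^lW$ yields $A_mx\ss p^lW$ for suitable $m\in N_l$: tails act $p^l$-divisibly on $x$. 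That is exactly what legitimizes reading your coordinate evaluations as $p$-adic limits of honest partial sums, and it also closes the endgame: once $a_nx=0$ for all $n\geq N$, splitting any $a\in A_N$ into a finite piece (which kills $x$) plus a tail gives $ax\in p^lW$ for every $l$, and $\bigcap_l p^lW=0$ since $W$ is finitely generated over the local ring $\Zp$.

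With that lemma supplied, your route does go through, and it is genuinely different from the paper's at the free part: the paper proves outright (Proposition~\ref{PropFreeIsDiscrete}, via slenderness \cite[94.2]{FuchsII}, Lemma~\ref{LemmaCIsoA*} and the comodule correspondence) that every finite-rank free module is discrete, \emph{without using hypotheses (1) and (2) at all}, and then glues torsion and free parts by the $p^{2s}$-divisibility juggle in the main proof; you instead extract divisibility $a_nx\in p^lW$ from the hypotheses and would replace slenderness by a bare-hands uncountability argument (subset sums along a subsequence of strictly increasing valuations are pairwise distinct, giving uncountably many $p$-adic limits, while $\Zp$ is countable). Your diagnosis that the failure of $p$-adic completeness of $\Zp$ is essential is correct and parallels the paper's reliance on slenderness: over $\Z_p$ the analogous statement is false --- $A=\Z_p[\![t]\!]$ acting on $\Z_p$ through $t\mapsto p$ satisfies both hypotheses and gives a locally finitely generated module that is not discrete.
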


Of course, if $A=\Z[\![t]\!]$ with basis $\{t^n\,|\, n\geq 0\}$, then the conditions of the
theorem are satisfied with $N_l=\N$ for all $l$.
\medskip

We need some preliminary results before we prove Theorem \ref{ThmLFGIsDiscrete}. 
First, we can make a simplification. Suppose that $M$ is an $A$-module which is locally finitely generated over $R$
and let $x \in M$. Then $x$ is contained in an $R$-finitely generated $A$-submodule of $M$, 
namely $Ax$. If every $Ax$ is discrete, then $x \in Ax \ss \Dsc M$ for every $x \in M$, so $\Dsc M = M$
and $M$ is discrete. Thus it is enough to show that every $R$-finitely generated $A$-module is discrete.

Let $N$ be an $R$-finitely generated $A$-module. We consider separately the cases of $N$ a free $R$-module of 
finite rank and $N$ a finite torsion $R$-module and then prove a limited extension 
theorem to combine the two. Take first an $N$ which is free over $R$. Discreteness 
will follow from the fact that this is a `slender' abelian group.

\begin{defn}
\cite[\S94]{FuchsII}. Let
\[
P = \prod_{n \geq 0} \Z e_n
\]
for some generators $e_n$. An abelian group $G$ is \emph{slender} if every linear map 
$\eta \from P \to G$ has $\eta(e_n)=0$ for all but finitely many $n$.
\end{defn}

In the literature, \eg\ in \cite{FuchsII}, this property is only 
considered for $G$ torsion-free, but we can define it for every $G$. It 
is relevant to us because $A$ contains a subgroup isomorphic to
$P$, under the map $e_n \mapsto a_n$, so any linear map out of $A$ induces 
a linear map out of $P$.
In particular this applies to the map $a \mapsto ax$ from $A$ to $M$,
where $M$ is an $A$-module and $x \in M$.

\begin{lemma}
\cite[94.2]{FuchsII}. If $R = \Z$ or $R = \Zp$, then a free $R$-module of finite rank is
slender.\qed
\end{lemma}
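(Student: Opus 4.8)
The plan is to reduce to the rank-one case and then prove directly that each of $R = \Z$ and $R = \Zp$ is slender, using the $p$-adic topology to treat both rings uniformly.

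First I would record the routine reduction. If $G = G_1 \oplus \dots \oplus G_k$ with each $G_i$ slender, then $G$ is slender: for any linear $\eta \from P \to G$, each composite $\pi_i \circ \eta \from P \to G_i$ with the projection $\pi_i$ kills $e_n$ for all but finitely many $n$, and a finite intersection of cofinite sets is cofinite, so $\eta(e_n) = 0$ for almost all $n$. Since a free $R$-module of finite rank is $R^k \iso R \oplus \dots \oplus R$, it suffices to show that $R$ itself is slender when $R = \Z$ and when $R = \Zp$.

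For the rank-one case I would exploit two features shared by $\Z$ and $\Zp$: both are countable, and both embed as subrings of the ring $\widehat{\Z}_p$ of $p$-adic integers (the denominators allowed in $\Zp$ are prime to $p$, hence are $p$-adic units). So let $\eta \from P \to R$ be linear, set $c_n = \eta(e_n)$, and suppose for contradiction that $c_n \neq 0$ for infinitely many $n$. Restricting $\eta$ to the subproduct spanned by those $e_n$, which is again isomorphic to $P$, I may assume $c_n \neq 0$ for every $n$; then each $c_n$ has finite $p$-adic valuation $t_n = \nu_p(c_n)$. I would choose integers $d_0 < d_1 < \dots$ with $d_{n+1} > d_n + t_n$, so that the valuations $w_n = d_n + t_n$ of the elements $p^{d_n} c_n$ are strictly increasing.

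The key step is then a counting argument. For each $\epsilon = (\epsilon_n) \in \{0,1\}^{\N}$ the element $x_\epsilon = \sum_n \epsilon_n p^{d_n} e_n$ lies in $P$, and its tail $\sum_{n \geq m} \epsilon_n p^{d_n} e_n$ is divisible by $p^{d_m}$ in $P$; applying $\eta$ gives $\eta(x_\epsilon) \equiv \sum_{n < m} \epsilon_n p^{d_n} c_n \pmod{p^{d_m}}$ in $\widehat{\Z}_p$ for every $m$, whence $\eta(x_\epsilon) = \sum_n \epsilon_n p^{d_n} c_n$, the sum converging $p$-adically. Because the valuations $w_n$ are strictly increasing, distinct $\epsilon$ yield $p$-adically distinct values: if $k$ is the first index of disagreement, the difference has valuation exactly $w_k < \infty$. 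Thus $\epsilon \mapsto \eta(x_\epsilon)$ is an injection of the uncountable set $\{0,1\}^{\N}$ into $R$, contradicting the countability of $R$. Hence $c_n = 0$ for almost all $n$, so $R$ is slender, and the reduction above finishes the proof.

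I expect the main obstacle to be handling $\Z$ and $\Zp$ at once. For $\Z$ one is tempted by an Archimedean argument, forcing the integer $\eta(x_\epsilon)$ to be arbitrarily small and hence eventually zero; but this collapses for $\Zp$, which is dense in $\R$, so smallness in absolute value never forces vanishing. Passing to the $p$-adic topology and replacing the size estimate by the valuation/cardinality argument above circumvents this, and the only inputs needed are that $\Z$ and $\Zp$ are countable subrings of $\widehat{\Z}_p$ in which every nonzero element has finite $p$-adic valuation.
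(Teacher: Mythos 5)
Your proof is correct, and it is worth noting that the paper itself offers no argument here: the lemma is simply cited from Fuchs (Theorem 94.2), which is Sasiada's theorem that a countable torsion-free abelian group is slender if and only if it is reduced, the finite-rank case following exactly by your first reduction (a finite direct sum of slender groups is slender). Your blind proof is in effect a self-contained specialization of that theorem to $\Z$ and $\Zp$, and every step checks out: the embedding of both rings into the $p$-adic integers $\widehat{\Z}_p$ is legitimate (denominators in $\Zp$ are $p$-adic units), nonzero elements have finite nonnegative valuation, so your choice $d_{n+1} > d_n + t_n$ does make $w_n = d_n + t_n$ strictly increase; the divisibility of the tail $\sum_{n \geq m}\epsilon_n p^{d_n} e_n$ by $p^{d_m}$ uses only that $P$ is a full product and that $\eta$ is additive, giving $\eta(x_\epsilon) = \sum_n \epsilon_n p^{d_n} c_n$ $p$-adically; and the first-disagreement valuation computation yields an injection of the uncountable set $\{0,1\}^{\N}$ into the countable ring $R$, the desired contradiction. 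What your route buys is independence from the literature and a uniform treatment of the two rings (your closing observation that the naive Archimedean argument for $\Z$ breaks down for $\Zp$, which is dense in $\R$, is exactly the right diagnosis); what the citation buys is the more general statement for arbitrary countable reduced torsion-free groups — though your argument comes close to reproducing it, since all it really uses is that $R$ is a countable subgroup of $\widehat{\Z}_p$.
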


\begin{lemma}\label{LemmaCIsoA*}
If $R = \Z$ or $R = \Zp$ and $C$ is a regular $R$-coalgebra, the $R$-linear map $C \to A^*$ determined by
\[
c_n(w) \mapsto a_n^*
\]
is an isomorphism of abelian groups.
\end{lemma}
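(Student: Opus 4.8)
The plan is to prove injectivity by inspection and to deduce surjectivity from the slenderness of the ground ring, which is exactly where the restriction $R=\Z$ or $R=\Zp$ is used. Write $\Phi\from C\to A^*$ for the map determined by $c_n(w)\mapsto a_n^*$, where $a_n^*\from A\to R$ is the coefficient functional $a_n^*(\sum_m r_ma_m)=r_n$; since $C$ is free on $\{c_n(w)\}$ this extends $R$-linearly, sending a finite combination $c=\sum_n s_nc_n(w)$ to $\sum_n s_na_n^*$. Injectivity is immediate: if $\Phi(c)=0$ then $s_m=\Phi(c)(a_m)=0$ for each $m$, so $c=0$.

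For surjectivity, fix $\psi\in A^*=\Mod{R}{}(A,R)$ and put $s_n=\psi(a_n)$. The first step is to see that $s_n=0$ for all but finitely many $n$. As in the preceding remark, $A$ contains a copy of $P=\prod_{n\ge 0}\Z e_n$ via $e_n\mapsto a_n$, so $\psi$ restricts to a homomorphism $P\to R$; since $R$, being free of rank one over itself, is slender by the previous lemma, there is an $N$ with $s_n=0$ for all $n\ge N$. The natural candidate preimage is then the finite sum $c=\sum_{n<N}s_nc_n(w)\in C$, and by construction $\psi$ and $\Phi(c)$ agree on every $a_n$.

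The main obstacle is that agreement on the basis elements $a_n$ does not by itself give $\psi=\Phi(c)$: an $R$-linear functional need not be continuous, so it need not respect the infinite sums making up $A$. I would remove this obstruction with a second, more delicate appeal to slenderness of the type the remark anticipates. Given $b=\sum_n r_na_n\in A$, write $b^{(k)}=\sum_{n\ge k}r_na_n$ for its tails; the homomorphism $P\to R$, $(m_j)\mapsto\psi\big(\sum_n r_n\big(\sum_{j\le n}m_j\big)a_n\big)$, carries $e_k$ to $\psi(b^{(k)})$, so slenderness forces $\psi(b^{(k)})=0$ for all large $k$. On the other hand $b^{(k)}-b^{(k+1)}=r_ka_k$, whence $\psi(b^{(k)})-\psi(b^{(k+1)})=r_ks_k=0$ for $k\ge N$; thus $\psi(b^{(k)})$ is constant for $k\ge N$ and eventually zero, so $\psi(b^{(N)})=0$ and $\psi(b)=\sum_{n<N}r_ns_n=\Phi(c)(b)$. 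As $b$ was arbitrary, $\psi=\Phi(c)$ and $\Phi$ is onto. In effect the image of $\Phi$ is the continuous dual of $A$ (the functionals vanishing on some $A_N$), and the heart of the argument is the slenderness-driven upgrade from ``kills almost all coefficients'' to genuine continuity; this is the one place the choice of $R$ is indispensable.
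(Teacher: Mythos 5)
Your proof is correct, and it follows the same basic route as the paper's — injectivity by evaluating on the $a_m$, surjectivity from slenderness of $R$ — but it is genuinely more careful at exactly the point you identified. The paper's own proof is two sentences: given $\alpha \from A \to R$, slenderness yields $\alpha(a_n)=0$ for almost all $n$, ``hence $\alpha$ is a finite linear combination of the $a_n^*$.'' That ``hence'' is precisely the gap you flagged: with only the stated definition of slender (every homomorphism $P \to G$ kills almost all $e_n$), agreement of $\alpha$ with a finite combination of the $a_n^*$ on the topological basis does not by itself give agreement on the infinite sums constituting $A$, since $\alpha$ is not assumed continuous. The paper is implicitly invoking the stronger classical fact about slender groups (Specker's theorem and its relatives, \S 94 of the cited Fuchs volume) that a homomorphism out of $P$ into a slender group is determined by its values on the $e_n$, equivalently vanishes on all sufficiently deep tails. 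Your second appeal to slenderness proves exactly this upgrade inline: the homomorphism $(m_j)\mapsto \psi\big(\sum_n r_n\big(\sum_{j\le n}m_j\big)a_n\big)$ sends $e_k$ to the tail value $\psi(b^{(k)})$, so slenderness makes these eventually zero, and the telescoping identity $\psi(b^{(k)})-\psi(b^{(k+1)})=r_ks_k=0$ for $k\ge N$ makes them constant from $N$ on, whence $\psi(b^{(N)})=0$ and $\psi(b)=\sum_{n<N}r_ns_n$. (Note that $\eta$ only needs to be a homomorphism of abelian groups, which it is, and that is all the definition of slenderness requires; this is why the argument works equally for $R=\Z$ and $R=\Zp$.) In short: same key tool, but the paper buys brevity by silently delegating the continuity step to the literature on slender groups, while your version is self-contained from the definition actually stated in the paper — arguably an improvement, since the definition as given does not formally support the paper's final inference.
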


\begin{proof}
Certainly this map is injective. If $\alpha \from A \to \Zp$ is a linear map, 
then by slenderness of $R$, $\alpha(a_n) = 0$ for all but finitely many $n$. 
Hence $\alpha$ is a finite linear combination of the $a_n^*$ and so lies in the image of $C$.
\end{proof}

Generally, we would expect $A^*$ to be larger than $A$, just as $A = C^*$ is larger than $C$. 
This odd property of slender groups solves the first part of our problem.

\begin{prop}\label{PropFreeIsDiscrete}
An $A$-module $N$ which is free of finite rank as an $R$-module is discrete.
\end{prop}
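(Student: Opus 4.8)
The plan is to reduce, via Lemma~\ref{LemmaCharDisc}, to producing for each $x \in N$ an index $m$ with $A_m x = 0$. So I would fix $x \in N$ and consider the $R$-linear evaluation map
\[
\phi_x \from A \to N, \qquad \phi_x(a) = ax.
\]
As observed just before the definition of slender groups, $A$ contains a copy of $P = \prod_{n \geq 0}\Z e_n$ as the subgroup $\prod_{n\geq 0}\Z a_n$ via $e_n \mapsto a_n$, and restricting $\phi_x$ to this subgroup gives an additive map $\eta \from P \to N$. The idea is to extract information about the $a_n x$ from slenderness of $N$ and then promote it to the vanishing of $\phi_x$ on a whole filtration ideal.

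Since $N$ is free of finite rank over $R$, it is slender by the cited Lemma. I would use slenderness in its strengthened, ``continuous'' form: for a slender group, a homomorphism out of the full product $\prod_n \Z$ does not merely kill almost every generator $e_n$, but in fact vanishes on an entire tail $\prod_{k \geq m}\Z$ of the product (the Specker--Łoś phenomenon, \cite[\S94]{FuchsII}). Applying this to $\eta$ produces an $m$, depending only on $x$, with
\[
\phi_x\Big(\sum_{k \geq m} c_k a_k\Big) = 0 \qquad \text{for all } c_k \in \Z;
\]
that is, $\phi_x$ annihilates $P_m := \prod_{k \geq m}\Z a_k$. When $R = \Z$ we have $A_m = P_m$ and the proof is complete.

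The remaining case $R = \Zp$ is where I expect the genuine obstacle: now $A_m = \prod_{k\geq m}\Zp a_k$ strictly contains $P_m$, and slenderness (a statement about $\Z$-coefficient products) says nothing directly about the extra $\Zp$-coefficient elements. I would bridge this by $p$-adic approximation. Given $a = \sum_{k \geq m} r_k a_k \in A_m$ with $r_k \in \Zp$, fix $n \geq 1$; since $\Z \to \Zp/p^n\Zp$ is surjective, choose integers $s_k$ with $r_k - s_k \in p^n\Zp$, so that
\[
a = \sum_{k \geq m} s_k a_k + p^n \sum_{k \geq m} t_k a_k, \qquad t_k \in \Zp.
\]
The first summand lies in $P_m$ and is therefore killed by $\phi_x$, whence $\phi_x(a) = p^n\,\phi_x\big(\sum_{k\geq m} t_k a_k\big) \in p^n N$. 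Because $m$ is fixed while $n$ ranges over all positive integers, this forces $\phi_x(a) \in \bigcap_{n\geq 1} p^n N = 0$, the intersection vanishing since $N$ is a finitely generated free $\Zp$-module. Hence $A_m x = 0$, and, $x$ being arbitrary, $N$ is discrete.
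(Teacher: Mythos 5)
Your argument is correct, but it is genuinely different from the paper's. The paper never attacks $A_mx$ directly: it uses slenderness of the \emph{ground ring} $R$, through Lemma~\ref{LemmaCIsoA*} ($C \iso A^*$), to convert the coordinate functionals $a \mapsto \gamma_i(a,x)$ of the action into elements of $C$, assembles these into a coaction $\rho(x) = \sum x_i \tp g_i'(x)$ making $N$ a $C$-comodule, and then quotes Lemma~\ref{LemmaComodsAreDiscrete}. You instead apply slenderness of $N$ \emph{itself} to the evaluation map $\phi_x$, and in the strengthened tail-vanishing form: a homomorphism from $P = \prod_{n} \Z$ to a slender group annihilates an entire tail $P_m$, not merely almost all generators. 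That is strictly more than the definition quoted in the paper, but it is standard (the Specker--{\L}o\'s phenomenon, \cite[\S 94]{FuchsII}) and follows from the definition by the usual re-indexing trick (if $b^{(k)} \in P_{m_k}$ with $m_k \to \infty$ and $\eta(b^{(k)}) \neq 0$, then $(c_k) \mapsto \sum_k c_k b^{(k)}$ is a well-defined homomorphism $P \to P$ contradicting slenderness). Your $p$-adic approximation step, using $\bigcap_n p^nN = 0$ for $N$ free of finite rank over $\Zp$, correctly bridges the gap between $P_m$ and $A_m = \big\{\sum_{k \geq m} r_k a_k \,|\, r_k \in \Zp\big\}$; it is worth noting that the ``hence'' in the paper's proof of Lemma~\ref{LemmaCIsoA*} silently needs exactly these same two ingredients to pass from ``$\alpha(a_n)=0$ for almost all $n$'' to ``$\alpha$ is a finite combination of the $a_n^*$'', so your write-up makes explicit what the paper compresses. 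As for what each route buys: the paper's proof stays inside its comodule dictionary and exhibits an explicit $C$-comodule structure on $N$, consonant with Proposition~\ref{PropDiscIsoComod}; yours is more elementary and more general, needing no coalgebra $C$ and no local projectivity, only the topological basis and its filtration, so it works verbatim in the bare setting of Theorem~\ref{ThmLFGIsDiscrete} and, via Lemma~\ref{LemmaCharDisc}, produces the discreteness index $m$ for each $x$ explicitly.
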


\begin{proof}
Take $x \in N$ and let $x_1,\dots,x_k$ be an $R$-basis for $N$. Then if $a \in A$,
\[
ax = \sum \gamma_i(a,x) x_i
\]
for some $\gamma_i(a,x) \in R$. For $i = 1,\dots,k$, define $R$-linear maps
\[
g_i(x) \from A \to R
\]
by $g_i(x)(a) = \gamma_i(a,x)$ and let $g'_i(x)$ be the element of $C$ 
corresponding to $g_i(x)$ under the isomorphism of Lemma \ref{LemmaCIsoA*}. 
The $g'_i(x)$ give us a map $\rho \from N \to N \tp C$,
\[
\rho(x) = \sum x_i \tp g'_i(x),
\]
which is $R$-linear because $a(x+y) = ax+ay$. Such a map makes $N$ a $C$-comodule 
if and only if the corresponding map (using the action \eqref{EqComodIsMod}) 
$A \tp N \to N$ is an $A$-action map. But this map is the given action of $A$ on $N$, 
so $N$ is indeed a $C$-comodule. By Lemma~\ref{LemmaComodsAreDiscrete}, $N$ is discrete.
\end{proof}

We cannot use the concept of slenderness to attack the case of finite $A$-modules, since no
 non-zero finite $\Zp$-module is slender. (One can see this by showing that
for $M$ a non-zero finite $\Zp$-module, $\Hom_{\Z}(P,M)$ is an uncountable set.)
\smallskip

The remainder of our approach is motivated by the case of $\Z[\![t]\!]$. 
If $N$ is a $\Z$-finitely generated $\Z[\![t]\!]$-module, we can use the 
classification of finitely generated abelian groups to write $N$ 
as an extension of $\Z[\![t]\!]$-modules
\[
0 \to T \to N \to F \to 0,
\]
where $T$ is the torsion part of $N$ and $F$ is free over $\Z$. 
By Proposition~\ref{PropFreeIsDiscrete}, 
$F$ is discrete. 
If $x \in T$ then since $T$ is finite there must be $m < n$ such that $t^mx = t^nx$. 
Then $(t^m - t^n)x = 0$, so $t^m(1 - t^{n-m})x = 0$. Since $1 - t^{n-m}$ is a unit in $\Z[\![t]\!]$, 
it follows that $t^mx=0$. So $(t^m)x=0$ and therefore $T$ is discrete.
As discussed earlier, over $\Z[\![t]\!]$, discrete modules are closed under extensions, so
$N$ is discrete. The conditions of Theorem~\ref{ThmLFGIsDiscrete} allow us to generalize this argument.

\begin{lemma}\label{LemmaFiniteIsTame}
If $A$ satisfies conditions (1) and (2) of Theorem~\ref{ThmLFGIsDiscrete}, 
$M$ is a finite $A$-module and $p^sM = 0$, then there is $m \in N_s$ with $a_mM = 0$.
\end{lemma}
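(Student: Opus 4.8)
The plan is to phrase everything in terms of the additive endomorphisms of the finite group $M$ induced by the basis elements. For each $n$ write $\phi_n \colon M \to M$ for the map $x \mapsto a_n x$; more generally write $\phi_a$ for the action of $a \in A$, so that $\phi_a \circ \phi_b = \phi_{ab}$ and $\phi_1 = \id$. The first thing I would record is that $p^s A$ annihilates $M$: indeed $(p^s a)x = p^s(ax) \in p^s M = 0$. Hence, taking $l = s$ in hypothesis~(2), the congruence $a_m a_n \equiv a_{m+n} \bmod p^s$ becomes an \emph{exact} identity of endomorphisms of $M$, namely $\phi_m \circ \phi_n = \phi_{m+n}$ for every $m \in N_s$ and every $n \in \N_0$.

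Next I would exploit finiteness by pigeonhole. The set of maps $M \to M$ is finite while $N_s$ is infinite, so the assignment $m \mapsto \phi_m$ is not injective on $N_s$; choose $m, n \in N_s$ with $m < n$ and $\phi_m = \phi_n$. Writing $n = m + (n-m)$ and applying the exact composition law above (with first index $m \in N_s$ and second index $n-m \in \N_0$) gives $\phi_m \circ \phi_{n-m} = \phi_n = \phi_m$, and therefore $\phi_m \circ (\phi_{n-m} - \id) = 0$.

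Finally I would invoke hypothesis~(1): since $m, n \in N_s$ with $m < n$, the element $1 - a_{n-m}$ is a unit of $A$, and a unit necessarily acts on $M$ by an invertible endomorphism, with inverse the action of $(1-a_{n-m})^{-1}$. Thus $\id - \phi_{n-m} = \phi_{1-a_{n-m}}$ is invertible, and so is $\phi_{n-m} - \id$. Composing the relation $\phi_m \circ (\phi_{n-m} - \id) = 0$ on the right with $(\phi_{n-m} - \id)^{-1}$ forces $\phi_m = 0$, \ie\ $a_m M = 0$ with $m \in N_s$, as wanted.

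The point I expect to require the most care is the interface between the two hypotheses: condition~(1) supplies units only for \emph{differences} $a_{n-m}$ with $m < n$ drawn from $N_s$, which is precisely why the argument must compare two distinct members of $N_s$ via pigeonhole. A more naive attempt---fixing a single $m \in N_s$ and iterating, using $\phi_m^{\,k} = \phi_{km}$ from condition~(2)---would produce relations involving $a_{km}$, whose indices $km$ need not be of the form $n - m$ for $m,n \in N_s$, so condition~(1) could not be brought to bear. The reduction modulo $p^s$ that upgrades condition~(2) to an exact law, together with the observation that units act invertibly, are the two ingredients that make the final factorization collapse to $\phi_m = 0$.
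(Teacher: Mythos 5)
Your proof is correct, and it uses the two hypotheses in exactly the roles the paper does---condition (2) with $l=s$ to convert a coincidence of actions into a factorization through $1-a_{n-m}$, and condition (1) to cancel that unit---but you run the pigeonhole one level up, and this genuinely streamlines the argument. The paper applies finiteness of $M$ \emph{elementwise}: for each $x\in M$ it finds $m_x<n_x$ in $N_s$ with $a_{m_x}x=a_{n_x}x$, factors $a_{m_x}(1-a_{n_x-m_x})x=p^s\theta x=0$ to get $a_{m_x}x=0$, and then needs a second pass---taking $m=\max_x m_x$ over the finitely many $x$ and invoking condition (2) once more, via $a_m x = a_{m-m_x}a_{m_x}x$---to promote the elementwise annihilation to $a_mM=0$. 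By instead applying the pigeonhole to the finite set of maps $M\to M$, you obtain a single pair $m<n$ in $N_s$ with $\phi_m=\phi_n$ as endomorphisms, so the factorization $\phi_m\circ(\phi_{n-m}-\id)=0$ concludes uniformly in one step and the maximum step disappears. A small bonus of your right-composition with $(\phi_{n-m}-\id)^{-1}$: it respects the stated asymmetry of condition (2), whose congruence $a_ma_n\equiv a_{m+n}$ places the $N_l$-index in the \emph{left} factor, whereas the paper's closing identity $a_mx=a_{m-m_x}a_{m_x}x$ uses the congruence with the $N_s$-index on the right and so tacitly appeals to commutativity of $A$ (harmless in all the applications, where $A$ is the dual of a cocommutative coalgebra, but your version needs no such appeal). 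The only pedantic edge case, which the paper ignores too, is $s=0$, where $M=0$ and the claim is vacuous but $N_0$ is not among the sets supplied by the hypothesis.
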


\begin{proof}
Let $x \in M$. Since $N_s$ is infinite and $M$ is finite, 
there must be $m,n \in N_s$ with $m < n$ and $a_mx=a_nx$, equivalently,
\[
(a_m-a_n)x=0.
\]
We may factorize this as $a_m(1-a_{n-m})x=p^s\theta x= 0$ for 
some $\theta \in A$ using (2); then, by (1), $a_mx=0$.

Now if we have such an $m_x$ for each $x \in M$, let $m$ be the largest. 
For any $x$, $a_mx = a_{m-m_x}a_{m_x}x = 0$, because $m_x \in N_s$.
\end{proof}

\begin{lemma}\label{LemmaExtensionsLemma}
If $A$ satisfies condition (2) of Theorem~\ref{ThmLFGIsDiscrete}, 
$m \in N_l$ and $n \geq 0$, then for any $a \in A_{m+n}$ there is 
$b \in a_mA_n$ such that $p^l \text{ divides } a-b$.
\end{lemma}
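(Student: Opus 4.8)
The plan is to write down $b$ explicitly and then verify the congruence by reducing everything modulo $p^l$. Since $a \in A_{m+n}$, I can write $a = \sum_{k \geq m+n} r_k a_k$ with $r_k \in \Zp$. Every such index satisfies $k-m \geq n$, so
\[
c = \sum_{j \geq n} r_{j+m}\, a_j
\]
is a well-defined element of $A_n$, and I take $b = a_m c$, which lies in $a_m A_n$ by construction. The claim is then that this $b$ works.

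The heart of the argument is to pass to the quotient $A/p^l A$. Let $\pi \from A \to A/p^l A$ be the projection. Condition (2) says exactly that, because $m \in N_l$, in the ring $A/p^l A$ one has $\pi(a_m)\pi(a_j) = \pi(a_{m+j})$ for every $j \geq 0$. I would give $A/p^l A$ the quotient filtration and observe that it is Hausdorff: if $x$ lies in every $A_k + p^l A$, then each coordinate of $x$ relative to the topological basis is divisible by $p^l$ in $\Zp$, and hence $x \in p^l A$. In particular $\pi$ is continuous and multiplication on $A/p^l A$ is continuous.

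Next I would exploit this continuity. Writing $c_K = \sum_{j=n}^{K} r_{j+m}\, a_j$ for the partial sums, I have $c_K \to c$ in $A$, so $\pi(a_m)\pi(c_K) \to \pi(b)$. On the other hand, condition (2) applied to each term of the \emph{finite} sum gives, for every $K$,
\[
\pi(a_m)\pi(c_K) = \sum_{j=n}^{K} r_{j+m}\, \pi(a_m)\pi(a_j) = \sum_{j=n}^{K} r_{j+m}\, \pi(a_{m+j}) = \pi\Big( \sum_{k=m+n}^{m+K} r_k a_k \Big),
\]
and the right-hand side converges to $\pi(a)$ because $\sum_{k=m+n}^{m+K} r_k a_k \to a$ in $A$. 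Since a single sequence cannot have two distinct limits in the Hausdorff ring $A/p^l A$, I conclude $\pi(b) = \pi(a)$, that is, $a - b \in p^l A$, which is the desired divisibility.

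The one genuine obstacle is the passage from the finitely many congruences furnished by (2) to a single congruence involving the infinite sum $c$: termwise one has $a_m a_j = a_{m+j} + p^l \theta_j$ with $\theta_j \in A$ of uncontrolled filtration degree, so the error terms cannot simply be added up inside $A$. Working in $A/p^l A$ and invoking continuity of its multiplication launders this difficulty. Equivalently, one can argue directly in $A$: completeness together with the fact that multiplication by $p^l$ preserves the filtration (it is injective with $p^l x \in A_M \iff x \in A_M$, as $\Zp$ is torsion-free) shows that the accumulated errors $p^l \sum_{j} r_{j+m}\theta_j$ form a Cauchy sequence whose limit lies in $p^l A$. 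Everything else is routine bookkeeping with the topological basis.
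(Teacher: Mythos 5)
Your proof is correct, but its main line is genuinely different from the paper's. The paper picks the same $b = a_m c$ with $c = \sum_{i \geq n} \lambda_{m+i} a_i$, but then makes the one observation you declare unavailable: writing $a_{m+i} = a_m a_i + p^l \theta_i$, the error $\theta_i$ actually lies in $A_i$, since $p^l\theta_i = a_{m+i} - a_m a_i \in A_i$ (both terms lie in the two-sided ideal $A_i$) and division by $p^l$ respects the filtration, because $\Zp$ is a domain and $A_i$ consists exactly of the sums supported in degrees $\geq i$. Hence the error series $\sum_{i \geq n} \lambda_{m+i}\theta_i$ converges in $A$ itself and the identity $a = a_m c + p^l\theta$ holds outright, with no quotient ring needed. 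So your assertion that the $\theta_j$ are of ``uncontrolled filtration degree'' is not accurate --- but it does no damage, since your substitute argument in $A/p^lA$ is sound: the quotient filtration is Hausdorff (your coordinatewise-divisibility check is exactly the same structural fact that underlies the paper's $\theta_i \in A_i$), multiplication is continuous because the $A_k$ are two-sided ideals, the topology is first countable so sequences suffice, and uniqueness of limits forces $\pi(a) = \pi(b)$. What your route buys is that you never have to name or control individual error terms; what the paper's buys is brevity --- two lines once $\theta_i \in A_i$ is noted. Your closing sketch (Cauchy partial error sums, with $E_{K+1} - E_K = r_{m+K+1}(a_{m+K+1} - a_m a_{K+1}) \in A_{K+1}$ and the limit landing in the closed submodule $p^lA$) is essentially the paper's proof in disguise, so you in fact hold both arguments.
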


\begin{proof}
We may write
\[
a = \sum_{i \geq n} \lambda_{m+i} a_{m+i}
\]
for some $\lambda_{m+i} \in \Zp$. By (2), for each $i$ there is $\theta_i \in A$ with
\[
a_{m+i} = a_m a_i + p^l \theta_i.
\]
Note that, since $A_i$ is an ideal, $\theta_i \in A_i$. Hence the 
infinite sum $\sum_{i \geq n} \lambda_{m+i} \theta_i$ converges and therefore represents an element of $A$.

Summing over $i$, we get
\begin{align*}
a &= \sum_{i \geq n} \lambda_{m+i} (a_m a_i + p^l \theta_i)\\
&= a_m \sum_{i \geq n} \lambda_{m+i} a_i + p^l \sum_{i \geq n}
\lambda_{m+i} \theta_i,
\end{align*}
as required.
\end{proof}

\begin{proof}[Proof of Theorem~\ref{ThmLFGIsDiscrete}]
We have reduced this to showing that a $\Zp$-finitely generated
$A$-module $N$ is discrete. Consider the extension
\[
0 \to T \to N \to F \to 0
\]
of $A$-modules, where $T$ is the $p$-torsion $A$-submodule of $N$
and $F = N/T$. By the classification of finitely generated
$\Zp$-modules~\cite[IV.6]{Hungerford}, $F$ is a free $\Zp$-module of finite rank;
hence it is discrete by Proposition~\ref{PropFreeIsDiscrete}. Take
$n$ with $A_nF=0$. (Since $F$ is $\Zp$-finitely generated and hence $A$-finitely generated, there is
such an $n$.)

Now take $x \in N$. Since $A_n(x+T)=0$ in $F$, we must have $A_nx \ss T$.
By Lemma \ref{LemmaFiniteIsTame}, there is $m \in N_s$, where
$p^sT=0$, such that $a_mT = 0$; hence $a_mA_nx = 0$.

We want to show that $A_{m+n}x = 0$, so take $a \in A_{m+n}$. By Lemma~\ref{LemmaExtensionsLemma}, 
there are $b \in a_mA_n$ and $\theta \in A$ such that $a - b = p^s\theta$. Then
$ax = bx + p^s \theta x= p^s \theta x$,
as $a_mA_n x = 0$. We also know that $ax \in T$, because $A_{m+n}x \ss A_nx \ss T$, so $p^s ax = 0$. 
This means $p^{2s}\theta x = 0$, so $\theta x \in T$. But then $p^s \theta x = 0$, \ie\ $ax=0$.
\end{proof}

We end this section with a result corresponding to Theorem~\ref{ThmLFGIsDiscrete}
under the assumption that we are given a regular coalgebra $C$ or $D = C[w^{-r}]$. 
This form of the result is less intuitive, but we will need it to deal with those applications in 
the next section where the dual topological basis is impractical to work with. 
We refer to Definition~\ref{DefnCoeffs} for the coefficients $\Lambda$ and $\Gamma$
appearing in the statement.

\begin{thm}\label{ThmLFGIsDiscrete(C)}
Let $p$ be a prime. Let $C$ be a regular $\Z_{(p)}$-coalgebra with specified basis
$\{c_n(w)\,|\, n\geq 0\}$ and let $A=C^*$.
If for every $l > 0$ there is an infinite set $N_l \ss \N_0$ such that
\begin{enumerate}
\item $p \text{ divides } \Lambda_{n-m}^j$ whenever $m,n \in N_l$ with $m<n$ and $j \in \N_0$, and
\item if $m \in N_l$ and $n \in \N_0$, then 
$\Gamma_{m,n}^{m+n} = 1$ and $p^l \text{ divides } \Gamma_{m,n}^i$ whenever $i \neq m+n$,
\end{enumerate}
then $\Disc{A}{} = \LFG{A}{}$.

If $D = C[w^{-r}]$ has a $\Zp$-basis $F_n(w)$ which satisfies analogous conditions 
to those above and $B = D^*$, then $\Disc{B}{} = \LFG{B}{}$.
\end{thm}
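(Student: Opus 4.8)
The plan is to deduce the result directly from Theorem~\ref{ThmLFGIsDiscrete}. Since $A = C^*$ is a topological $\Zp$-algebra with topological basis $\{a_n = c_n(w)^* \mid n \geq 0\}$, it suffices to show that the coalgebra hypotheses (1) and (2) force the algebra hypotheses (1) and (2) of Theorem~\ref{ThmLFGIsDiscrete} to hold for the same sets $N_l$. The whole argument is thus a translation between the two sets of conditions, carried out through the pairing $\ab{-,-}$ and the structure constants $\Lambda$ and $\Gamma$ of Definition~\ref{DefnCoeffs}.

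First I would verify condition (2). Using the product formula $a_m a_n = \sum_{i \geq 0} \Gamma_{m,n}^i a_i$, I split off the term $i = m+n$:
\[
a_m a_n = \Gamma_{m,n}^{m+n} a_{m+n} + \sum_{i \neq m+n} \Gamma_{m,n}^i a_i.
\]
The coalgebra condition (2) says $\Gamma_{m,n}^{m+n} = 1$ and $p^l \mid \Gamma_{m,n}^i$ for all $i \neq m+n$, so each tail coefficient is $p^l \mu_i$ with $\mu_i \in \Zp$; the sum $\theta = \sum_{i \neq m+n}\mu_i a_i$ is a legitimate element of $A$, since every formal sum $\sum r_i a_i$ lies in $A$. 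Hence $a_m a_n - a_{m+n} = p^l \theta$, which is exactly condition (2) of Theorem~\ref{ThmLFGIsDiscrete}.

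Next I would verify condition (1) via the unit criterion of Lemma~\ref{LemmaDetectingUnits}. The key computation is $\ab{a_j, w^{kr}} = \Lambda_j^k$, which follows by pairing $a_j$ against $w^{kr} = \sum_{n=0}^k \Lambda_n^k c_n(w)$ (with the convention $\Lambda_j^k = 0$ for $j > k$); together with $\ab{1, w^{kr}} = 1$, valid because $w^{kr}$ is grouplike and $1 \in A$ is the counit of $C$, this gives
\[
\ab{1 - a_{n-m}, w^{kr}} = 1 - \Lambda_{n-m}^k
\]
for all $k \geq 0$. Coalgebra condition (1) states $p \mid \Lambda_{n-m}^k$ for every $k$, so each of these pairings reduces to $1$ modulo $p$ and is therefore a unit in $\Zp$. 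By Lemma~\ref{LemmaDetectingUnits}(1), $1 - a_{n-m}$ is a unit in $A$, which is condition (1) of Theorem~\ref{ThmLFGIsDiscrete}. Applying that theorem yields $\Disc{A}{} = \LFG{A}{}$.

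For the second statement, concerning $D = C[w^{-r}]$ and $B = D^*$, the argument is formally identical once the structure constants $\Lambda$ and $\Gamma$ are defined relative to the basis $F_n(w)$. The only change is that units in $B$ are detected by Lemma~\ref{LemmaDetectingUnits}(2), so one checks the pairings $\ab{1 - b_{n-m}, w^{kr}}$ for all $k \in \Z$ rather than only $k \geq 0$; the analogous hypotheses on $D$ are precisely what guarantees these lie in $\Zp^\times$. The step requiring most care is the bookkeeping for this two-sided Laurent indexing, but no new idea enters beyond the translation already described. Indeed, I expect the only genuinely delicate points throughout to be confirming that the tail sum $\theta$ defines an element of $A$ and the identity $\ab{1, w^{kr}} = 1$; the remainder is a direct unwinding of Definition~\ref{DefnCoeffs} feeding into Theorem~\ref{ThmLFGIsDiscrete}.
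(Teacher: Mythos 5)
Your proposal is correct and follows essentially the same route as the paper's proof: both reduce to Theorem~\ref{ThmLFGIsDiscrete} by computing $\ab{1-a_{n-m},w^{jr}} = 1 - \Lambda_{n-m}^j$ and invoking Lemma~\ref{LemmaDetectingUnits}(1) for condition~(1), and by reading condition~(2) off the product formula $a_m a_n = \sum_{i\geq 0} \Gamma_{m,n}^i a_i$, with the second statement handled by the same translation using Lemma~\ref{LemmaDetectingUnits}(2). Your write-up is merely a little more explicit than the paper's (spelling out that the tail $\theta = \sum_{i \neq m+n}\mu_i a_i$ is a genuine element of $A$ and justifying $\ab{1,w^{kr}}=1$ via the counit on the grouplike $w^{kr}$), but the argument is identical in substance.
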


\begin{proof}
We prove the first statement; the second is similar. It is enough to check that 
these conditions imply those of Theorem \ref{ThmLFGIsDiscrete}. For any $n > m \geq 0$ and $j \geq 0$,
\[
\ab{1-a_{n-m},w^{jr}} = 1 - \Big\langle a_{n-m},\sum_{i=0}^j \Lambda_i^j f_i(w) \Big\rangle
= 1 - \Lambda_{n-m}^j,
\]
which, by (1), is in $1+p\Z_{(p)} \ss \Z_{(p)}^\times$ provided $m,n \in N_l$. 
By Lemma \ref{LemmaDetectingUnits}(1), therefore, $1-a_{n-m}$ is a unit, so condition (1) 
of Theorem~\ref{ThmLFGIsDiscrete} is satisfied.

Since
\[
a_m a_n = \sum_{i \geq 0} \Gamma_{m,n}^i a_i,
\]
condition (2) above immediately implies condition (2) of Theorem \ref{ThmLFGIsDiscrete}.
\end{proof}

\section{Applications of the theorems}\label{SecApplications}

In this section we apply our results on locally finitely generated modules 
to the algebras of operations described in Section~\ref{SecKCoops}.
We will use Theorems~\ref{ThmLFGIsDiscrete} and~\ref{ThmLFGIsDiscrete(C)} to prove the following result.

\begin{thm}\label{ThmLFGForKs}
Let $p$ be a prime. If $E$ is $K_{(p)}$, $k_{(p)}$, $G$, $g$, $KO_{(2)}$ or $ko_{(2)}$, then
\[
\Disc{E^0(E)}{} = \LFG{E^0(E)}{}.
\]
\end{thm}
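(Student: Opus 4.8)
The plan is to verify, case by case, that each operation algebra $E^0(E)$ satisfies the hypotheses of either Theorem~\ref{ThmLFGIsDiscrete} or Theorem~\ref{ThmLFGIsDiscrete(C)}, and then invoke that theorem to conclude $\Disc{E^0(E)}{} = \LFG{E^0(E)}{}$. The key observation guiding the choice of sets $N_l$ is the structure of the dual topological bases described in Section~\ref{SecKCoops}: since each basis element is a polynomial in an Adams operation $\Psi^q$ (or $\Psi^3$), and the coalgebra basis elements $f_n(w), \hat f_n(w), h_n(w)$ are built from the $\theta$-polynomials $\theta_n(X;q)$, the multiplicative structure is controlled by congruences involving powers of $q$ modulo $p^l$. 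The natural candidate is to take $N_l$ to be an arithmetic-progression-like set of indices $n$ for which $q^n \equiv 1 \bmod p^l$ (respectively $\hat q^n \equiv 1$ or $9^n \equiv 1$), exploiting that $q$ has finite multiplicative order modulo each $p^l$, so such $n$ form an infinite set.

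First I would treat the connective periodic-dual cases where a clean topological basis is available, namely $k_{(p)}$, $g$ and $ko_{(2)}$ (and their periodic partners $K_{(p)}$, $G$, $KO_{(2)}$ via Theorem~\ref{ThmTBForPeriodicOps}), using Theorem~\ref{ThmLFGIsDiscrete} directly. Here $a_n = \theta_n(\Psi^q;q)$ or the corresponding $\Theta_n$-element, and the task is to check the two congruence conditions. For condition~(2), $a_m a_n \equiv a_{m+n} \bmod p^l$, I would compute the product of $\theta$-polynomials and observe that $\theta_m(\Psi^q;q)\,\theta_n(\Psi^q;q)$ differs from $\theta_{m+n}(\Psi^q;q)$ by terms whose coefficients involve factors $(q^j - q^k)$ that are divisible by $p^l$ precisely when the relevant indices lie in $N_l$; this is where the choice $q^m \equiv 1 \bmod p^l$ pays off, since it aligns the two factorizations. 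For condition~(1), I would use Lemma~\ref{LemmaDetectingUnits} to reduce unit-detection to checking $\ab{1 - a_{n-m}, w^{rj}} \in R^\times$ for all $j$, which again comes down to $\theta_{n-m}$ evaluating to something $\equiv 1 \bmod p$.

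For the remaining cases $k_{(2)}$ and $K_{(2)}$, where the excerpt explicitly notes that the topological basis is impractical, I would instead apply the coalgebra version Theorem~\ref{ThmLFGIsDiscrete(C)}, working directly with the regular $\Z_{(2)}$-coalgebra bases $f^{(2)}_n(w)$ and $F^{(2)}_n(w)$ from Theorem~\ref{basisconnectivecoops}(4) and Theorem~\ref{periodiccoopsbases}(4). The task becomes verifying the two conditions on the structure constants $\Lambda$ and $\Gamma$: that $p \mid \Lambda_{n-m}^j$ and that $\Gamma_{m,n}^{m+n} = 1$ with $p^l \mid \Gamma_{m,n}^i$ for $i \neq m+n$, for $m,n \in N_l$. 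Since $f^{(2)}_n$ and $h_m$ are related by explicit formulas involving $h_m(w)$ and the factor $(3^m - w)/(2\cdot 3^m)$, I would express the $\Lambda$ and $\Gamma$ coefficients in terms of those for $KO_0(ko)_{(2)}$ (already handled via $h_n$) and track the extra factors through the change of basis.

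The main obstacle I anticipate is the congruence bookkeeping in condition~(2) / the $\Gamma$-condition: showing that the product $a_m a_n$ (equivalently the comultiplication coefficient $\Gamma^i_{m,n}$) collapses to $a_{m+n}$ modulo $p^l$ requires a genuine number-theoretic input about the $p$-adic valuations of the Gaussian-binomial-type coefficients $\theta_k(q^k;q)$ appearing in the denominators of the basis elements, and correctly identifying the set $N_l$ so that both conditions hold simultaneously. The $k_{(2)}$ and $K_{(2)}$ cases compound this because the odd-indexed basis elements carry the extra $(3^m - w)$ factor, so I would expect to do a separate argument (or a parity split on indices) to confirm that $N_l$ can be chosen to respect the doubled indexing $f^{(2)}_{2m}, f^{(2)}_{2m+1}$. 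Once the congruences are pinned down, assembling the conclusion from Theorems~\ref{ThmLFGIsDiscrete} and~\ref{ThmLFGIsDiscrete(C)} is immediate.
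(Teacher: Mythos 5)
Your proposal follows essentially the same route as the paper: the same case split (the $\theta$-polynomial dual bases for $k_{(p)}$, $g$, $ko_{(2)}$ and their periodic partners handled through Theorem~\ref{ThmLFGIsDiscrete}, which the paper packages as Theorem~\ref{ThmLFGIsDiscrete-specific} using the product expansion of Lemma~\ref{LemmaThetasAreLikePolys}, and the coalgebra-side Theorem~\ref{ThmLFGIsDiscrete(C)} with the change of basis through $\Gamma(KO_0(ko)_{(2)})$ for $k_{(2)}$ and $K_{(2)}$), with the same sets $N_l$ determined by the multiplicative order of $q$ modulo $p^l$, exactly as in the paper's Table~\ref{table}. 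Two small corrections to your sketch, both of the bookkeeping kind you anticipated: for condition~(1) one needs $\theta_{n-m}(\beta^{jr};(z_i)) \equiv 0 \bmod p$ so that $\ab{1-a_{n-m},w^{jr}}$ is a unit (not ``$\equiv 1 \bmod p$'' as written, which would make it fail), and in the periodic cases the exponents $(-1)^i\floor{i/2}$ force the doubled condition $2p^{l-1}(p-1) \mid n$ (since for $m = p^{l-1}(p-1)$ one has $q^{m/2} \equiv -1$, not $1$, mod $p^l$), rather than the naive $q^n \equiv 1 \bmod p^l$ which suffices only in the connective cases.
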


Note that the cases of $K_{(2)}$ and $k_{(2)}$ are included in this statement; 
we will, however, have to prove them separately from $K_{(p)}$ and $k_{(p)}$ for 
$p$ odd. In each case we need to find suitable infinite subsets $N_l$ of $\N_0$, 
one for each $l > 0$. We list them in the following table. In general we must discard small 
values of $n$ in order for condition (1) of Theorem~\ref{ThmLFGIsDiscrete} to be satisfied
and impose a divisibility condition for condition (2) to be satisfied. For $K_{(2)}$ and $k_{(2)}$ 
we will, as noted before, use Theorem~\ref{ThmLFGIsDiscrete(C)}.
\newpage

\begin{tabel}\label{table}
\end{tabel}
\begin{center}
\renewcommand{\arraystretch}{0.6}
\begin{tabular}{c|c}
Spectrum            & $N_l$\\
\hline
\rule{0pt}{14pt}
$K_{(p)}$ ($p$ odd) & $\{ n \in \N_0\,|\,n \geq p-1 \textup{ and } 2p^{l-1}(p-1) \text{ divides } n \}$\\
\\
$k_{(p)}$ ($p$ odd) & $\{ n \in \N_0\,|\,n \geq p-1 \textup{ and } p^{l-1}(p-1) \text{ divides } n \}$\\
\\
$G$                 & $\{ n \in \N_0\,|\,n \geq 1 \textup{ and } 2p^{l-1} \text{ divides } n \}$\\
\\
$g$                 & $\{ n \in \N_0\,|\,n \geq 1 \textup{ and } p^{l-1} \text{ divides } n \}$\\
\\
$KO_{(2)}$          & $N_1 = N_2 = \{ n \in \N_0\,|\,n \geq 1 \textup{ and } 2 \text{ divides } n \}$,\\
                    & $\{ n \in \N_0\,|\,n \geq 1 \textup{ and } 2^{l-2} \text{ divides } n \}$ for $l\geq 3$\\
\\
$ko_{(2)}$          & $N_1 = N_2 = \{ n \in \N_0\,|\,n \geq 1 \}$,\\
                    & $\{ n \in \N_0\,|\,n \geq 1 \textup{ and } 2^{l-3} \text{ divides } n \}$ for $l\geq 3$\\
\\
$K_{(2)}$        & $N_1 = N_2 = \{ n \in \N_0\,|\,n \geq 1 \textup{ and } 2 \text{ divides } n \}$,\\   
                    & $\{ n \in \N_0\,|\,n \geq 1 \textup{ and } 2^{l-2} \text{ divides } n \}$ for $l\geq 3$\\
\\
$k_{(2)}$           & $\{ n \in \N_0\,|\,n \geq 1 \textup{ and } 2^{l-3} \text{ divides } n \}$
\end{tabular}
\end{center}
\medskip

Before we go through the cases, we prove a useful lemma.
Let $(z_i)_{i\geq 1}$ be a sequence of rationals.
Recall the polynomials $\theta_n(X;(z_i))$ from Definition~\ref{Defnthetapolys}.
Note that, for any $0 \leq i \leq m$, the quotient
\[
\frac{\theta_m(X;(z_i))}{\theta_{m-i}(X;(z_i))} = \prod_{k=m-i+1}^m (X-z_k)
\]
is a polynomial in $X$.

\begin{lemma}\label{LemmaThetasAreLikePolys}
Let $(z_i)_{i\geq 1}$ be a sequence of rationals and, for brevity, 
write $\theta_n(X)$ for $\theta_n(X;(z_i))$. Then, for any $m,n \geq 0$,
\[
\theta_{m+n}(X) = \theta_m(X)\theta_n(X) + \sum_{i=0}^{n-1} (z_{n-i}-z_{m+n-i}) 
\frac{\theta_n(X)}{\theta_{n-i}(X)} \theta_{m+n-i-1}(X).
\]
\end{lemma}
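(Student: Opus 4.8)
The plan is to establish the identity by telescoping, reducing it to the trivial observation that $(X - z_{m+n-i}) - (X - z_{n-i}) = z_{n-i} - z_{m+n-i}$.

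First I would introduce the interpolating family of polynomials
\[
S_i = \frac{\theta_n(X)}{\theta_{n-i}(X)}\,\theta_{m+n-i}(X), \qquad i = 0, 1, \dots, n,
\]
each of which is genuinely a polynomial by the remark preceding the lemma (the constraint $0 \le i \le n$ ensures $\theta_{n-i}$ divides $\theta_n$). At the two endpoints we have $S_0 = \theta_{m+n}(X)$ and, since $\theta_0 = 1$, $S_n = \theta_n(X)\theta_m(X)$. Thus $S_0$ and $S_n$ are exactly the two expressions we wish to compare, and it suffices to show that the telescoping sum $\sum_{i=0}^{n-1}(S_i - S_{i+1}) = S_0 - S_n$ agrees termwise with the sum in the statement.

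The key step is the single-difference computation. Writing both $S_i$ and $S_{i+1}$ out as explicit products $\prod_k (X - z_k)$, one sees they share the common factor
\[
\frac{\theta_n(X)}{\theta_{n-i}(X)}\,\theta_{m+n-i-1}(X) = \left[\prod_{k=n-i+1}^{n}(X - z_k)\right]\prod_{k=1}^{m+n-i-1}(X - z_k),
\]
and differ only in one remaining linear factor: $S_i$ carries the extra factor $X - z_{m+n-i}$ while $S_{i+1}$ carries $X - z_{n-i}$. Hence
\[
S_i - S_{i+1} = (z_{n-i} - z_{m+n-i})\,\frac{\theta_n(X)}{\theta_{n-i}(X)}\,\theta_{m+n-i-1}(X),
\]
which is precisely the $i$-th summand on the right-hand side. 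Summing over $0 \le i \le n-1$ and rearranging then yields the claimed formula.

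I expect no genuine obstacle here; the only thing demanding care is the index bookkeeping inside the products, namely confirming that the ranges $\{n-i+1, \dots, n\}$ and $\{1, \dots, m+n-i-1\}$ really do capture all but the single distinguishing factor of each $S_i$ and $S_{i+1}$, and that the degenerate cases $i = 0$ (empty numerator product) and $n = 0$ (empty sum) behave correctly. I would therefore write out the single-difference computation in full, since it is the heart of the argument, and dispatch the telescoping in one line.
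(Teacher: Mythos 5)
Your proof is correct and is essentially the paper's argument in a tidier form: the paper splits $\theta_{m+n-i}(X) = (X-z_{n-i})\theta_{m+n-i-1}(X) + (z_{n-i}-z_{m+n-i})\theta_{m+n-i-1}(X)$ and says ``repeatedly apply,'' and your telescoping family $S_i$ is exactly what that repeated application produces, since multiplying the paper's identity by $\theta_n(X)/\theta_{n-i}(X)$ gives your relation $S_i - S_{i+1} = (z_{n-i}-z_{m+n-i})\frac{\theta_n(X)}{\theta_{n-i}(X)}\theta_{m+n-i-1}(X)$. The index bookkeeping and the degenerate cases ($i=0$, $n=0$, and $m=0$, where the summands vanish) all check out.
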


\begin{proof}
For $0 \leq i \leq n-1$,
\begin{align*}
\theta_{m+n-i}(X) &= (X-z_{m+n-i})\theta_{m+n-i-1}(X)\\
&= (X-z_{n-i})\theta_{m+n-i-1}(X) + (z_{n-i}-z_{m+n-i})\theta_{m+n-i-1}(X).
\end{align*}
Repeatedly apply this formula.
\end{proof}

Next we give conditions which imply those of Theorem~\ref{ThmLFGIsDiscrete} specifically for
the case where $A$ is the dual of a regular coalgebra and has a topological basis of
$\theta$-polynomials. We write $\nu_p$ for $p$-adic valuation.

\begin{thm}\label{ThmLFGIsDiscrete-specific}
Let $A$ be the dual of a regular $\Zp$-coalgebra $C$ and suppose that $A$ has a topological $\Zp$-basis of
the form $\{a_n=\theta_n(\Psi^\beta, (z_i))\,|\,n\geq 0\}$ for some $\beta, z_i\in\Zp$.
If for every $l > 0$ there is an infinite set $N_l \ss \N_0$ such that
\begin{enumerate}
\item $\theta_{n-m}(\beta^{jr}, (z_i))\in p\Zp$  for every $m,n \in N_l$ with $m<n$ and $j\in\N_0$, and
\item $\nu_p(z_{n-i}-z_{m+n-i})\geq l$ for $0\leq i\leq n-1$ whenever $m \in N_l$ and $n \in \N_0$,
\end{enumerate}
then $\Disc{A}{} = \LFG{A}{}$.
\end{thm}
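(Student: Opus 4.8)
The plan is to verify that hypotheses (1) and (2) here imply the two hypotheses of Theorem~\ref{ThmLFGIsDiscrete}, after which the conclusion $\Disc{A}{} = \LFG{A}{}$ is immediate; this mirrors the reduction carried out in the proof of Theorem~\ref{ThmLFGIsDiscrete(C)}. The uniform tool is that each grouplike $w^{jr}$ makes the evaluation $a \mapsto \ab{a, w^{jr}}$ an algebra map $A \to \Zp$, and that $X \mapsto \Psi^\beta$ defines a $\Zp$-algebra homomorphism $\Zp[X] \to A$ carrying $\theta_n(X;(z_i))$ to $a_n$. Composing these two homomorphisms identifies $\ab{\theta_n(\Psi^\beta;(z_i)), w^{jr}}$ with evaluation of the polynomial at $\beta^{jr}$, that is, with $\theta_n(\beta^{jr};(z_i))$.

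First I would treat condition (1) of Theorem~\ref{ThmLFGIsDiscrete} by means of the unit criterion of Lemma~\ref{LemmaDetectingUnits}(1). For $m < n$ in $N_l$ and any $j \geq 0$, the observation above gives
\[
\ab{1 - a_{n-m}, w^{jr}} = 1 - \theta_{n-m}(\beta^{jr};(z_i)),
\]
and hypothesis (1) forces this to lie in $1 + p\Zp \ss \Zp^\times$. As $j$ was arbitrary, Lemma~\ref{LemmaDetectingUnits}(1) then shows that $1 - a_{n-m}$ is a unit in $A$.

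Next I would establish condition (2) by pushing the polynomial identity of Lemma~\ref{LemmaThetasAreLikePolys} through the homomorphism $X \mapsto \Psi^\beta$, obtaining in $A$
\[
a_{m+n} = a_m a_n + \sum_{i=0}^{n-1} (z_{n-i} - z_{m+n-i}) \frac{\theta_n(\Psi^\beta)}{\theta_{n-i}(\Psi^\beta)} \theta_{m+n-i-1}(\Psi^\beta).
\]
Each quotient $\theta_n/\theta_{n-i}$ is a monic polynomial with coefficients in $\Zp$, so both $\theta$-factors in a given summand are genuine elements of $A$; by hypothesis (2) the scalar $z_{n-i} - z_{m+n-i}$ has $p$-adic valuation at least $l$. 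Since the sum is finite, every summand lies in $p^l A$, giving $a_m a_n \equiv a_{m+n} \bmod p^l$ whenever $m \in N_l$, which is condition (2) of Theorem~\ref{ThmLFGIsDiscrete}.

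The step needing the most care is the transfer of the formal polynomial identity of Lemma~\ref{LemmaThetasAreLikePolys} into an honest identity in the convolution algebra $A$: one must confirm that $\Psi^\beta$ is grouplike, so that its convolution powers coincide with ordinary powers and $X \mapsto \Psi^\beta$ is genuinely multiplicative, and that the $\theta$-quotients appearing are integral rather than merely rational, so that the correction terms land in $p^l A$ and not just in $p^l(A \tp \Q)$. Granting this, the theorem reduces cleanly to Theorem~\ref{ThmLFGIsDiscrete}.
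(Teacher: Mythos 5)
Your proposal is correct and follows exactly the paper's own (very brief) proof: both verify the hypotheses of Theorem~\ref{ThmLFGIsDiscrete} by pairing $1-a_{n-m}$ against the grouplikes $w^{jr}$ and invoking Lemma~\ref{LemmaDetectingUnits}(1) for condition (1), and by specializing the identity of Lemma~\ref{LemmaThetasAreLikePolys} at $\Psi^\beta$ for condition (2). The only cosmetic quibble is your final caveat: multiplicativity of $X\mapsto\Psi^\beta$ is automatic for any element of the convolution algebra $A$ (the grouplike property is needed only for the evaluations $\ab{-,w^{jr}}$ to be algebra maps, and integrality of the quotients $\theta_n/\theta_{n-i}$ is immediate from $z_i\in\Zp$, as noted before Lemma~\ref{LemmaThetasAreLikePolys}), so these points are fine and your argument stands as written.
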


\begin{proof}
Since $\langle 1-a_{n-m}, w^{jr}\rangle = 1-\theta_{n-m}(\beta^{jr}, (z_i))$, using Lemma~\ref{LemmaDetectingUnits},
we see that condition (1) here implies condition (1) of Theorem~\ref{ThmLFGIsDiscrete}. It follows directly
from Lemma~\ref{LemmaThetasAreLikePolys} that condition (2) here implies condition (2) of Theorem~\ref{ThmLFGIsDiscrete}.
\end{proof}

We now give the details of how to apply Theorem~\ref{ThmLFGIsDiscrete-specific}
to most of the connective theories.

\begin{prop}\label{PropLFGconnective}
Let $E$ be $k_{(p)}$ for $p$ an odd prime, $g$ or $ko_{(2)}$. Consider the topological
bases for $A=E^0(E)$ given in Theorem~\ref{ThmTBForOps}
and let the sets $N_l$ for $l>0$ be as specified in Table~\ref{table}.
Then conditions (1) and (2) of Theorem~\ref{ThmLFGIsDiscrete-specific} are satisfied.
\end{prop}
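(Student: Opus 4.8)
The plan is to handle all three cases uniformly by reading off the relevant data from Theorem~\ref{ThmTBForOps}. In each case the topological basis has the shape $\{a_n = \theta_n(\Psi^\beta;\zeta) = \theta_n(\Psi^\beta;(\zeta^{i-1}))\mid n\geq 0\}$, so the sequence is $z_i = \zeta^{i-1}$, with $(\beta,r,\zeta) = (q,1,q)$ for $k_{(p)}$, $(q,\,p-1,\,\hat q)$ for $g$, and $(3,2,9)$ for $ko_{(2)}$, where $r$ is the periodicity exponent recorded in Section~\ref{SecKCoops}. In every case $\zeta$ is a unit of $R$ and, crucially, $\beta^r = \zeta$ on the nose: $q^1 = q$, $q^{p-1} = \hat q$, and $3^2 = 9$. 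Hence $\beta^{jr} = \zeta^j$ for all $j$, and the polynomial in condition (1) of Theorem~\ref{ThmLFGIsDiscrete-specific} becomes $\theta_{n-m}(\zeta^j;\zeta) = \prod_{i=0}^{n-m-1}(\zeta^j - \zeta^i)$.

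First I would verify condition (1). Writing $s = n-m \geq 1$, the product $\prod_{i=0}^{s-1}(\zeta^j - \zeta^i)$ lies in $p\Zp$ exactly when $\zeta^j \equiv \zeta^i \pmod{p}$ for some $0\leq i\leq s-1$, since $\Zp$ is local with maximal ideal $p\Zp$. Letting $d$ be the multiplicative order of $\zeta$ modulo $p$, this holds whenever $s\geq d$, because then $\{0,1,\dots,s-1\}$ meets every residue class modulo $d$. It remains to compute $d$ and check the lower bound on $s$ forced by $N_l$. For $k_{(p)}$, $\zeta = q$ is primitive mod $p$, so $d = p-1$; the elements of $N_l$ are the positive multiples of $p^{l-1}(p-1)$, so their differences satisfy $s\geq p-1 = d$. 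For $g$ and $ko_{(2)}$ we have $\zeta\equiv 1\pmod{p}$ (Fermat for $\hat q = q^{p-1}$, and directly for $9\equiv 1\pmod 2$), so $d=1$ and any $s\geq 1$ suffices; equivalently the $i=0$ factor $\zeta^j-1$ is already divisible by $p$.

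Next I would verify condition (2). With $z_i = \zeta^{i-1}$ one computes, for $0\leq i\leq n-1$,
\[
z_{n-i} - z_{m+n-i} = \zeta^{n-i-1}\bigl(1 - \zeta^m\bigr),
\]
and since $\zeta$ is a unit this has $\nu_p(z_{n-i}-z_{m+n-i}) = \nu_p(1-\zeta^m)$, independent of $i$ and $n$. So condition (2) reduces to the single requirement that $m\in N_l$ imply $\nu_p(1-\zeta^m)\geq l$. For $k_{(p)}$ this says $q^m\equiv 1\pmod{p^l}$; since $q$ is primitive mod $p^2$, its order mod $p^l$ is $p^{l-1}(p-1)$ (a standard lifting of primitivity), and $N_l$ is precisely the set of multiples of $p^{l-1}(p-1)$, so the requirement holds. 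For $g$ the condition is $q^{(p-1)m}\equiv 1\pmod{p^l}$, i.e.\ $p^{l-1}(p-1)\mid (p-1)m$, i.e.\ $p^{l-1}\mid m$, which is exactly the defining divisibility of $N_l$.

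The one genuinely computational point, and the step I expect to be the main obstacle, is the case $ko_{(2)}$, where I must establish $\nu_2(1-9^m) = 3 + \nu_2(m)$ for all $m\geq 1$. This is the lifting-the-exponent identity $\nu_2(9^m-1) = \nu_2(9-1) + \nu_2(9+1) + \nu_2(m) - 1 = 3 + 1 + \nu_2(m) - 1$. Granting it, $\nu_2(1-9^m)\geq l$ is equivalent to $\nu_2(m)\geq l-3$: for $l\leq 2$ this is automatic for $m\geq 1$ (matching $N_1 = N_2 = \{n\geq 1\}$), while for $l\geq 3$ it reads $2^{l-3}\mid m$, the divisibility defining $N_l$. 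This completes the verification of both conditions in all three cases, so Theorem~\ref{ThmLFGIsDiscrete-specific} applies.
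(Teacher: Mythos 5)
Your proposal is correct and follows essentially the same route as the paper: identify the data $(r,\beta,z_i)$ with $z_i=\zeta^{i-1}$ and $\beta^r=\zeta$, verify condition (1) by finding a factor $\zeta^j-\zeta^i$ divisible by $p$ (pigeonhole on the order of $\zeta$ mod $p$, which for $g$ and $ko_{(2)}$ degenerates to $\zeta\equiv 1$), and verify condition (2) via the factorization $z_{n-i}-z_{m+n-i}=\zeta^{n-i-1}(1-\zeta^m)$ together with the valuation computations $\nu_p(1-q^m)\geq l$, $\nu_p(1-\hat q^m)=1+\nu_p(m)$, and $\nu_2(1-9^m)=3+\nu_2(m)$. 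Your only cosmetic deviations are phrasing the $g$ case via the order of $q$ mod $p^l$ rather than a valuation formula, and deriving the $\nu_2$ identity from lifting-the-exponent where the paper cites its formula \eqref{Eq2adic} for $\nu_2(3^i-1)$; both are equivalent to the paper's computations.
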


\begin{proof}
The three theories fit into the framework of Theorem~\ref{ThmLFGIsDiscrete-specific} with
data as follows.
\vspace{-0.3cm}
\begin{center}
\begin{tabular}{c|ccc}
$E$&$r$&$\beta$&$z_i$\\
\hline
$k_{(p)}$&$1$&$q$&$q^{i-1}$\\
$g$&$p-1$&$q$&$\hat{q}^{i-1}$\\
$ko_{(2)}$&$2$&$3$&$9^{i-1}$
\end{tabular}
\end{center}

First consider $E=k_{(p)}$.
Let $j\geq 0$. Taking $n\geq p-1$, there is some $0\leq i\leq n-1$ such that $p-1$ divides
$j-i$ and so $p$ divides $q^j-q^i$. Thus $\theta_{n}(\beta^{jr}, (z_i))=\prod_{i=0}^{n-1}(q^j-q^i)\in p\Zp$
and this shows condition (1) is satisfied.

Taking $l >0$, $n \geq 0$ and $m \in N_l$, we have
$q^{n-i-1}-q^{m+n-i-1} = q^{n-i-1}(1-q^m)$ and since $p^{l-1}(p-1)$ divides $m$, $p^l$ divides $1-q^m$. 
So condition (2) is satisfied.
\smallskip

Next we consider $E=g$.
For each $n \geq 1$ and $j \geq 0$,
$\theta_n(\hat{q}^j;\hat{q})$ lies in $p\Zp$, and condition (1) is satisfied.
Since $\nu_p(\hat{q}^{n-i-1}-\hat{q}^{m+n-i-1}) = \nu_p(1-\hat{q}^m)
= 1+\nu_p(m)$, condition (2) is also satisfied.
\smallskip

Finally, let $E=ko_{(2)}$.
Recall that
\begin{equation}\label{Eq2adic}
\nu_2(3^i-1) = \begin{cases}1 & \textup{if $i$ is odd;}\\2+\nu_2(i) & \textup{if $i$ is even.}\end{cases}
\end{equation}
Thus $\theta_n(9^l;9)\in p\Zp$ for all $n\geq 1$ and condition (1) is satisfied.
Also 
\begin{align*}
\nu_2(9^{n-i-1}-9^{m+n-i-1}) = \nu_2(1-9^m)
= 3 + \nu_2(m).
\end{align*} 
For $l=1,2$, this is at least $3$, which is enough. For $l \geq 3$,
let $m\in N_l$ and then we have chosen $N_l$ such that $3 + \nu_2(m) \geq l$. 
So condition (2) is satisfied.
\end{proof}

Next we turn to the periodic versions.
The complex periodic case $K^0(K)_{(p)}$, for $p$ odd,
recovers~\cite[3.2]{ccw3}.

\begin{prop}\label{PropLFGperiodic}
Let $E$ be $K_{(p)}$ for $p$ an odd prime, $G$ or $KO_{(2)}$. Consider the topological
bases for $A=E^0(E)$ given in Theorem~\ref{ThmTBForPeriodicOps}
and let the sets $N_l$ for $l>0$ be as specified in Table~\ref{table}.
Then conditions (1) and (2) of Theorem~\ref{ThmLFGIsDiscrete-specific} are satisfied.
\end{prop}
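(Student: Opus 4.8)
The plan is to verify conditions (1) and (2) of Theorem~\ref{ThmLFGIsDiscrete-specific} directly for each of the three topological bases of Theorem~\ref{ThmTBForPeriodicOps}. All three have the common shape $a_n = \Theta_n(\Psi^\beta;\zeta) = \theta_n(\Psi^\beta;(z_i))$ with $z_i = \zeta^{e(i)}$, where $e(i) = (-1)^i\floor{i/2}$ is the two-sided exponent appearing in the definition of $\Theta_n$, and where $(\beta,r,\zeta)$ equals $(q,1,q)$ for $K_{(p)}$, $(q,p-1,\hat q)$ for $G$, and $(3,2,9)$ for $KO_{(2)}$. The single observation driving the whole argument is that each set $N_l$ of Table~\ref{table} consists of even integers, so for even $m$ one has $e(a+m)-e(a) = \pm m/2$ for every $a \geq 1$, the sign depending only on the parity of $a$. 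Since both the vanishing of $\zeta^j - \zeta^{e(i)}$ modulo $p$ and the valuation $\nu_p(\zeta^d-1)$ are insensitive to the sign of the exponent, this $\pm$ will be harmless. Since the periodic objects are duals of the Laurent coalgebra $D = C[w^{-r}]$, the relevant unit-detection (Lemma~\ref{LemmaDetectingUnits}(2)) ranges over all $j \in \Z$; happily the verifications below are uniform in $j$.

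For condition (1), I would split into two cases according to whether $\zeta \equiv 1 \pmod p$. This holds for $G$ (as $\hat q = q^{p-1} \equiv 1$) and for $KO_{(2)}$ (as $9 \equiv 1 \pmod 2$); in these cases every factor $\beta^{jr} - z_i = \zeta^j - \zeta^{e(i)}$ of $\theta_{n-m}(\beta^{jr};(z_i))$ is already divisible by $p$, so the product lies in $p\Zp$ as soon as $n-m \geq 1$. For $K_{(p)}$ the element $\zeta = q$ has order $p-1$ modulo $p$, so instead I must produce one index $i$ with $e(i) \equiv j \pmod{p-1}$, forcing the single factor $q^j - q^{e(i)}$ into $p\Zp$. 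This is where the definition of $N_l$ enters: any gap $n-m$ is a positive multiple of $2p^{l-1}(p-1)$, hence $n-m \geq 2(p-1)$, and the even indices $2,4,\dots,2(p-1)$ already contribute the exponents $1,2,\dots,p-1$, which represent every residue class mod $p-1$. This settles condition (1) for all three theories and all $j \in \Z$.

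For condition (2) I would invoke the exponent-shift observation. With $a = n-i \in \{1,\dots,n\}$, the relevant difference is $z_{n-i} - z_{m+n-i} = \zeta^{e(a)} - \zeta^{e(a+m)}$, and since $\zeta$ is a $p$-adic unit, $\nu_p(z_{n-i} - z_{m+n-i}) = \nu_p(\zeta^{e(a)-e(a+m)} - 1) = \nu_p(\zeta^{m/2}-1)$, which is independent of $a$. It then remains to evaluate this one valuation in each case and compare it with $l$. For $K_{(p)}$ one gets $\nu_p(q^{m/2}-1) = 1 + \nu_p(m/2) \geq l$ precisely when $p^{l-1} \mid m/2$, guaranteed by $2p^{l-1}(p-1) \mid m$; for $G$, $\nu_p(\hat q^{m/2}-1) = \nu_p(q^{(p-1)m/2}-1) = 1 + \nu_p(m/2) \geq l$ precisely when $2p^{l-1} \mid m$; and for $KO_{(2)}$, $\nu_2(9^{m/2}-1) = \nu_2(3^m-1) = 2 + \nu_2(m)$ by \eqref{Eq2adic} (as $m$ is even), which is automatically $\geq l$ for $l \leq 2$ and forces $2^{l-2} \mid m$ for $l \geq 3$. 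In each case this matches exactly the divisibility imposed on $N_l$ in Table~\ref{table}.

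The main obstacle is condition (2): the exponent function $e$ is neither monotone nor linear, so a priori the differences $z_{n-i} - z_{m+n-i}$ could behave erratically as $i$ varies, and one might fear needing a separate valuation estimate for each $i$. The crucial point --- and the reason the sets $N_l$ are built from even integers rather than all of $\N_0$ --- is that restricting $m$ to be even collapses every one of these differences to the single quantity $\zeta^{\pm m/2}-1$. After that collapse, the standard $p$-adic valuation computations (lifting the exponent for $K_{(p)}$ and $G$, and the tabulated formula \eqref{Eq2adic} for $KO_{(2)}$) finish the verification uniformly in $i$.
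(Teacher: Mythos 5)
Your proof is correct and takes essentially the same route as the paper's: the same data $(\beta,r,z_i)$ for the three theories, the same key observation that evenness of $m\in N_l$ collapses $z_{n-i}-z_{m+n-i}$ to a unit times $\zeta^{\pm m/2}-1$, and the same valuation computations matched against Table~\ref{table}. The only cosmetic difference is in condition (1) for $K_{(p)}$, where you cover the residues mod $p-1$ via the even indices $2,4,\dots,2(p-1)$ (needing gap at least $2(p-1)$, which your $N_l$ provides), while the paper notes that the exponents $(-1)^i\floor{i/2}$ for $i=1,\dots,n-m$ form $n-m$ consecutive integers, so a gap of $p-1$ already suffices.
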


\begin{proof}
The three theories fit into the framework of Theorem~\ref{ThmLFGIsDiscrete-specific} with
data as follows.

\begin{center}
\begin{tabular}{c|ccc}
$E$&$r$&$\beta$&$z_i$\\
\hline
$K_{(p)}$   &$1$    &$q$    &$q^{(-1)^i\floor{i/2}}$\\
$G$         &$p-1$  &$q$    &$\hat{q}^{(-1)^i\floor{i/2}}$\\
$KO_{(2)}$  &$2$    &$3$    &$9^{(-1)^i\floor{i/2}}$
\end{tabular}
\end{center}

First consider $E=K_{(p)}$ and
write $q_i = q^{(-1)^i \floor{i/2}}$. 
For condition (1) it will be enough to show that $\theta_n(q^j; (q_i))\in p\Zp$ for
$j>0$ and $n\geq p-1$.
Since $q$ generates $(\Z/p)^\times$, the difference $q^r-q_i$ is
divisible by $p$ if and only if $r-(-1)^i \floor{i/2}$ is divisible
by $p-1$. The set $\{ (-1)^i \floor{i/2}\,|\,i = 1,2,\dots,n \}$ 
consists of $n$ consecutive integers, so, since $n \geq p-1$, at least one of the linear factors
of $\theta_n(q^j; (q_i))$ is divisible by $p$.

For condition (2), take $l >0$, $m \in N_l$ and $n \in \N_0$. 
We need to show that $p^l$ divides $q_{n-i}-q_{m+n-i}$
for $0 \leq i \leq n-1$. Since $m$ is even,
$q_{n-i}-q_{m+n-i} = q_{n-i}(1 - q^{(-1)^{n-i}m/2}).$
Since $q$ generates $(\Z/p^l)^\times$ and $p^{l-1}(p-1)$
divides $(-1)^{n-i}m/2$, we deduce that $p^l$ divides $1-q^{(-1)^{n-i}m/2}$, as required.
\smallskip

Next consider $E=G$. Write $\hat{q}_i = \hat{q}^{(-1)^i \floor{i/2}}$.
For each $n \geq 1$ and $j \in \Z$, 
$\theta_n(\hat{q}^j;(q_i))\in p\Zp$, so condition (1) is satisfied.
For condition (2),
take $l >0$ and $m \in N_l$. 
Then 
\[
\nu_p(\hat{q}_{n-i}-\hat{q}_{m+n-i}) = \nu_p(1-\hat{q}^{m/2})
= 1 + \nu_p(m/2)
\geq l,
\]
 as required.
\smallskip

Finally consider $E=KO_{(2)}$ and
write $q_i = 9^{(-1)^i \floor{i/2}}$. 
By \eqref{Eq2adic}, we have $\theta_{n}(9^{j}, (q_i))\in 2\Z_{(2)}$
for every $n \geq 1$, so condition (1) is satisfied.

Take $l >0$ and $m \in N_l$. 
Let 
\begin{align*}
A_{m,n}^i &= 9^{(-1)^{n-i} \floor{(n-i)/2}}-9^{(-1)^{m+n-i} \floor{(m+n-i)/2}}\\
&= 9^{(-1)^{n-i} \floor{(n-i)/2}} (1-9^{m/2}) \qquad\text{since $m$ is even.}
\end{align*}
For condition (2) we need to show that $A_{m,n}^i$ is divisible by $2^l$ for $0\leq i\leq m-1$.
 Now $\nu_2(1-9^{m/2}) = \nu_2(1-3^m) = 2 +\nu_2(m)$, using \eqref{Eq2adic}. For $l=1,2$, this is at least $3$, which is enough; for $l \geq 3$, 
we have chosen $N_l$ such that $2 + \nu_2(m) \geq l$. 
\end{proof}

The last two examples we consider are the $2$-local complex spectra, 
and for these we use Theorem~\ref{ThmLFGIsDiscrete(C)}. 

\begin{prop}
\textup{(1)} Let $A=k^0(k)_{(2)}$. Consider the coalgebra $K_0(k)_{(2)}$ to which 
$A$ is dual, with basis $f_{n}^{(2)}(w)$ as given in Theorem~\ref{basisconnectivecoops}. 
Let $N_l$ be as specified in Table~\ref{table}.
Then conditions (1) and (2) of Theorem~\ref{ThmLFGIsDiscrete(C)} are satisfied.

\textup{(2)} Let $A=K^0(K)_{(2)}$. Consider the coalgebra $K_0(K)_{(2)}$ to which 
$A$ is dual, with basis $F_n^{(2)}(w)$ as
given in Theorem~\ref{periodiccoopsbases}. 
Let $N_l$ be as specified in Table~\ref{table}.
Then conditions (1) and (2) of Theorem~\ref{ThmLFGIsDiscrete(C)} are satisfied.
\end{prop}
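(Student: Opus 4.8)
The plan is to reduce everything to the real theories, whose structure constants were already controlled in Propositions~\ref{PropLFGconnective} and~\ref{PropLFGperiodic}. For part~(1), Theorem~\ref{basisconnectivecoops} gives $f_{2m}^{(2)}=h_m$ and $f_{2m+1}^{(2)}=\frac{3^m-w}{2\cdot 3^m}h_m$, so that
\[
w\,h_m=3^m f_{2m}^{(2)}-2\cdot 3^m f_{2m+1}^{(2)}.
\]
Because $w$ is grouplike, $\Delta$ is a ring homomorphism on $\Q[w]$ and $\Delta(w\,h_m)=(w\tp w)\Delta(h_m)$. Writing $\Lambda(ko)$ and $\Gamma(ko)$ for the coefficients of $KO_0(ko)_{(2)}$ (whose basis is $h_m$), the relation $\Delta(h_m)=\sum_{i,j}\Gamma(ko)_{i,j}^{m}\,h_i\tp h_j$ together with the display above lets me express each $\Delta(f_n^{(2)})$, and hence all the $\Lambda$ and $\Gamma$ for $K_0(k)_{(2)}$, through $\Lambda(ko)$, $\Gamma(ko)$ and powers of $3$.

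First I would verify condition~(1). Expanding $w^{2t}=\sum_m \Lambda(ko)_m^{t}h_m$ and multiplying by $w$ via the display gives, in the $f^{(2)}$ basis,
\[
\Lambda_{2m}^{2t}=\Lambda(ko)_m^{t},\quad \Lambda_{2m}^{2t+1}=3^m\Lambda(ko)_m^{t},\quad \Lambda_{2m+1}^{2t+1}=-2\cdot 3^m\Lambda(ko)_m^{t},
\]
with $\Lambda_{\mathrm{odd}}^{\mathrm{even}}=0$. Since $\Lambda(ko)_m^{t}=\theta_m(9^{t};9)\in 2\Z_{(2)}$ for $m\geq 1$ (this is condition~(1) for $ko_{(2)}$) and the odd-index coefficients carry an explicit factor $2$, every $\Lambda_s^j$ with $s\geq 1$ is even. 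As all nonzero differences $n-m$ of elements of $N_l$ are $\geq 1$, condition~(1) of Theorem~\ref{ThmLFGIsDiscrete(C)} follows at once.

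The substance is condition~(2). Substituting the formula for $\Delta(f_i^{(2)})$ into $\Gamma_{m,n}^i=\langle a_m\tp a_n,\Delta f_i^{(2)}\rangle$ and sorting by parity, I expect
\[
\Gamma_{2m',2n'}^{2i'}=\Gamma(ko)_{m',n'}^{i'},\qquad \Gamma_{2m',2n'}^{2i'+1}=\tfrac12\Gamma(ko)_{m',n'}^{i'}\bigl(1-3^{\,m'+n'-i'}\bigr),
\]
and, when one lower index is odd, $\Gamma_{2m',2n'+1}^{2i'+1}=3^{\,m'+n'-i'}\Gamma(ko)_{m',n'}^{i'}$ while $\Gamma_{2m',2n'+1}^{2i'}=0$. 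At the distinguished index the factor $\Gamma(ko)_{m',n'}^{m'+n'}$ equals $1$ by Lemma~\ref{LemmaThetasAreLikePolys} (applied to the $\theta$-polynomial basis $h_m$), giving $\Gamma_{m,n}^{m+n}=1$; crucially this \emph{forces $m$ to be even}, which is why the sets $N_l$ consist of even indices. For $i\neq m+n$ I would combine the divisibility $2^l\mid\Gamma(ko)_{m',n'}^{i'}$ (condition~(2) for $ko_{(2)}$, valid when $m'$ lies in the real set $N_l$ for $ko_{(2)}$) with the elementary estimate $\nu_2\bigl(\tfrac12(1-3^{k})\bigr)\geq 1$ coming from~\eqref{Eq2adic}. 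The main obstacle is exactly this valuation accounting: one must check that the correction factor $\tfrac12(1-3^{k})$ in the odd-index terms never drops the $2$-adic valuation below $l$, and that the resulting threshold on $m$ is precisely the divisibility imposed on $N_l$ in Table~\ref{table}.

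Part~(2) is formally the same after inverting $w$. With $F_{2m}^{(2)}=w^{-m}h_m=H_m$ and $F_{2m+1}^{(2)}=\frac{3^m-w}{2\cdot 3^m}H_m$, where $H_m$ is the periodic real basis of Theorem~\ref{periodiccoopsbases}, dividing the connective identity by $w^m$ yields $wH_m=3^mF_{2m}^{(2)}-2\cdot 3^mF_{2m+1}^{(2)}$, and the same bialgebra computation expresses the coefficients of $K_0(K)_{(2)}$ through those of $KO_0(KO)_{(2)}$, already handled in Proposition~\ref{PropLFGperiodic}. Here I would invoke the Laurent form of Theorem~\ref{ThmLFGIsDiscrete(C)} (the statement for $D=C[w^{-r}]$) and Lemma~\ref{LemmaDetectingUnits}(2) to detect units as $j$ ranges over $\Z$. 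The only extra point is that powers of $w$ are now units in $B=D^*$, so rewriting $w^{\pm\bullet}F_i^{(2)}$ in the $F$ basis stays within the filtration; the valuation estimates are identical to those of part~(1).
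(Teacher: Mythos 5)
Your route is the paper's route: the identity $wh_m(w)=3^mf^{(2)}_{2m}(w)-2\cdot3^mf^{(2)}_{2m+1}(w)$, the bialgebra property of $\Delta$, and the parity-sorted coefficients
\[
\Gamma_{2m',2n'}^{2i'+1}=\tfrac{1-3^{m'+n'-i'}}{2}\,\Gamma(KO_0(ko)_{(2)})_{m',n'}^{i'},\qquad
\Gamma_{2m',2n'+1}^{2i'+1}=3^{m'+n'-i'}\,\Gamma(KO_0(ko)_{(2)})_{m',n'}^{i'}
\]
are exactly the displayed formulas in the paper's proof, with the reduction to the real theories of Propositions~\ref{PropLFGconnective} and~\ref{PropLFGperiodic} playing the same role. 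Your explicit $\Lambda$ computation ($\Lambda_{2m}^{2t}=\Lambda(ko)_m^t$, $\Lambda_{2m}^{2t+1}=3^m\Lambda(ko)_m^t$, $\Lambda_{2m+1}^{2t+1}=-2\cdot3^m\Lambda(ko)_m^t$, odd-on-even zero) is a correct direct verification of condition~(1); the paper instead phrases this through the pairing $\ab{1-\eta_{n-m},w^j}\in1+2\Z_{(2)}$ with $\eta_n=\theta_n(\Psi^3;9)$, but the content is the same. The part~(2) identities $wH_m=3^mF^{(2)}_{2m}-2\cdot3^mF^{(2)}_{2m+1}$ also match the paper's (omitted, ``very similar'') argument.

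Two points keep this from being a complete proof. First, you leave the ``valuation accounting'' for condition~(2) as an open obstacle, but as posed it dissolves instantly: $1-3^k$ is always even, so $\tfrac12(1-3^{m'+n'-i'})\in\Z_{(2)}$ and the mixed-parity coefficient inherits the full $2^l$-divisibility of $\Gamma(KO_0(ko)_{(2)})_{m',n'}^{i'}$ for $i'\neq m'+n'$, while at $i'=m'+n'$ the factor is $\tfrac12(1-3^0)=0$; no valuation is ever lost. The genuine requirement hiding here is one of index bookkeeping: for $m=2m'$ you need the \emph{halved} index $m'$ to satisfy the $ko_{(2)}$ divisibility $3+\nu_2(m')\geq l$ from \eqref{Eq2adic}, not $m$ itself. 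Second, your assertion that ``the sets $N_l$ consist of even indices'' is false as the $k_{(2)}$ row of Table~\ref{table} is printed: for $l\leq3$ it is all of $\{n\geq1\}$, and for odd $m,n$ your own parity analysis gives $\Gamma_{m,n}^{m+n}=0\neq1$, so condition~(2) of Theorem~\ref{ThmLFGIsDiscrete(C)} fails literally. The paper's proof sidesteps this by stating its two conditions in the $\eta_m$ (real-theory) indexing --- note the $k_{(2)}$ row is just the $ko_{(2)}$ row --- and declaring them ``equivalent''; to match the $f^{(2)}$-basis indexing that Theorem~\ref{ThmLFGIsDiscrete(C)} actually uses, one should pass to the doubled sets $\{2m'\,|\,m'\in N_l(ko_{(2)})\}$, which are still infinite and therefore cost nothing for Theorem~\ref{ThmLFGForKs}. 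So your parity observation is sharper than your conclusion: pursued honestly it corrects, rather than confirms, the table as stated.
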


\begin{proof}
We only give the details for the first part, since the proof of the second is very similar. 
Note that
\[
wf_{2m}^{(2)}(w) = 3^mf_{2m}^{(2)}(w) - 2.3^mf_{2m+1}^{(2)}(w).
\]
Since $\Delta$ is a map of bialgebras,
\begin{align*}
\Delta(f_{2m+1}^{(2)}(w)) &= \frac{1}{2.3^m}\Delta(3^m-w)\Delta(h_m(w))\\
&= \frac{3^m.1 \tp 1 - w \tp w}{2.3^m}\sum_{i,j}^m \Gamma(KO_0(ko)_{(2)})_{i,j}^m h_i(w) \tp h_j(w),
\end{align*}
so we compute that
\[
\Gamma(K_0(k)_{(2)})_{2i,2j}^{2m+1} = \frac{1-3^{i+j-m}}{2}\Gamma(KO_0(ko)_{(2)})_{i,j}^{m}
\]
and
\[
\Gamma(K_0(k)_{(2)})_{2i,2j+1}^{2m+1} = 3^{i+j-m}\Gamma(KO_0(ko)_{(2)})_{i,j}^{m}.
\]
(We will not need the other cases.)

Write $\eta_n=\theta_n(\Psi^3;9)$ for the basis elements given in Theorem~\ref{ThmTBForOps}. It follows from
Theorem~\ref{ThmLFGIsDiscrete-specific} and Proposition~\ref{PropLFGconnective} that
we have
\begin{enumerate}
\item if $m,n \in N_l$ and $j \in \N_0$, then $\ab{1-\eta_{n-m},w^j} \in 1+2\Z_{(2)}$; and
\item if $m \in N_l$ and $n \in \N_0$, then $\eta_m\eta_n \equiv \eta_{m+n} \mod 2^l$.
\end{enumerate}
The former is equivalent to condition (1) of Theorem~\ref{ThmLFGIsDiscrete(C)} and, since
\[
\eta_m\eta_n = \sum_{i \geq 0} \Gamma(KO_0(ko)_{(2)})_{m,n}^i \eta_i,
\]
the latter is equivalent to condition (2) of Theorem~\ref{ThmLFGIsDiscrete(C)}.
\end{proof}

We have now completed the proof of Theorem \ref{ThmLFGForKs}.

\end{document}